\documentclass[12pt,leqno]{article}
\usepackage{amsmath,amssymb,amsthm,amsfonts,mathrsfs}
\usepackage{appendix}
\usepackage{array,enumitem,graphics,xcolor,yfonts,colonequals}
\usepackage{stmaryrd}
\usepackage[alphabetic]{amsrefs}
\usepackage[all,cmtip]{xy}
\usepackage{tikz-cd}
\usepackage[colorlinks,anchorcolor=blue,citecolor=blue,linkcolor=blue,urlcolor=blue,bookmarksopen=true]{hyperref}
\usepackage{comment}
\usepackage{mathtools}
\usepackage[margin=1in]{geometry}
\usepackage{titlesec}
\titleformat{\section}
  {\normalfont\large\bf}{\thesection}{1em}{}
\titleformat{\subsection}[runin]
  {\normalfont\normalsize\bf}{\thesubsection}{1em}{}

\usepackage{fancyhdr}
\fancypagestyle{titlepage}{}

\AtBeginDocument{%
	\def\MR#1{}
}

\newcommand{\bC}{\mathbb{C}}    %Complex numbers
\newcommand{\bH}{\mathbb{H}}    %Quaternion
\newcommand{\bP}{\mathbb{P}}    %Projective space
\newcommand{\bQ}{\mathbb{Q}}    %Rational numbers
\newcommand{\bR}{\mathbb{R}}    %Real numbers
\newcommand{\bZ}{\mathbb{Z}}    %Integers

\newcommand{\bL}{\mathbf{L}}
\newcommand{\br}{\mathbf{r}}
\newcommand{\cE}{\mathcal{E}}   %Coherent sheaf
\newcommand{\cH}{\mathcal{H}}   %Upper half plane
\newcommand{\cO}{\mathcal{O}}   %Structure sheaf
\newcommand{\Amp}{\mathrm{Amp}}         %Ample cone
\newcommand{\Cl}{\mathrm{Cl}}
\newcommand{\Coh}{\mathrm{Coh}} %Category of coherence sheaves

\newcommand{\Db}{\mathrm{D^b}}  %Bounded derived category
\newcommand{\End}{\mathrm{End}}
\newcommand{\Ext}{\mathrm{Ext}}
\newcommand{\id}{\mathrm{id}}
\newcommand{\Nef}{\mathrm{Nef}}
\newcommand{\Plov}{\mathrm{Plov}}
\newcommand{\pol}{\mathrm{pol}}   %categorical polynomial entropy
\newcommand{\PSL}{\mathrm{PSL}}         %Modular group
\newcommand{\real}{\mathrm{Re}}     %Real part
\newcommand{\red}{\mathrm{red}}
\newcommand{\Rot}{\mathrm{Rot}}
\newcommand{\rot}{\mathrm{rot}}
\newcommand{\sh}{\mathrm{sh}}
\newcommand{\SL}{\mathrm{SL}}           %Special linear group
\newcommand{\Sp}{\mathrm{Sp}}
\newcommand{\td}{\mathrm{td}}
\newcommand{\tr}{\mathrm{tr}}       %Trace

\newcommand{\Aut}{\operatorname{Aut}}   %Automorphism group
\newcommand{\Hom}{\operatorname{Hom}}   %Hom-set
\newcommand{\NS}{\operatorname{NS}}     %Neron-Severi group
\newcommand{\bigw}{\bigwedge\nolimits^{\!\bullet}}

\newtheorem*{thm*}{Theorem}
\newtheorem*{prop*}{Proposition}
\newtheorem*{cor*}{Corollary}
\newtheorem*{notn*}{Notation}
\newtheorem{thm}{Theorem}[section]
\newtheorem{prop}[thm]{Proposition}
\newtheorem{cor}[thm]{Corollary}
\newtheorem{lemma}[thm]{Lemma}

\numberwithin{equation}{section}

\theoremstyle{definition}

\newtheorem{eg}[thm]{Example}

\newtheorem{rmk}[thm]{Remark}

\begin{document}
\title{Categorical Dynamics of Abelian Varieties}
\author{Yu-Wei Fan}
\date{}
\maketitle

\begin{abstract}
We establish several general results on the categorical dynamics of abelian varieties.
\begin{enumerate}[label=(\roman*)]
    \item We prove the exponential and polynomial Gromov--Yomdin equalities for every autoequivalence of the derived category of an arbitrary abelian variety. The exponential equality extends earlier results of Kikuta for elliptic curves and of Yoshioka for abelian surfaces and simple abelian varieties.
    \item We prove the categorical polynomial entropy of every autoequivalence is bounded above by $g^2$, where $g$ is the dimension of the abelian variety. This gives a categorical generalization of the polynomial log-volume growth of Lin--Oguiso--Zhang and recovers their upper bound as a direct consequence.
    \item We identify the shifting number of every autoequivalence of an abelian variety with the pullback of the normalized Barge--Ghys symplectic translation number on the universal cover of $\text{Sp}(4g,\mathbb{R})$.
    \item We prove that the set of reduced shifting numbers is finite if and only if the nef cone is rational polyhedral. Moreover, when this set is infinite, there exists an autoequivalence with transcendental shifting number. In particular, this gives a negative answer to a question of Dimitrov--Haiden--Katzarkov--Kontsevich concerning the algebraicity of categorical entropy functions.
\end{enumerate}
\end{abstract}

\setcounter{tocdepth}{3}
\tableofcontents

%--------------------%--------------------%--------------------
\section{Introduction}
\label{sec:intro}
Dimitrov--Haiden--Katzarkov--Kontsevich \cite{DHKK} introduced categorical entropy to measure the growth of iterates of an endofunctor of a triangulated category. Their construction associates to an autoequivalence $\Phi$ an entropy function $t\mapsto h_t(\Phi)$, in which the parameter $t$ records cohomological shifts. Subsequent work has developed numerical bounds, polynomial refinements, and asymptotic shifting invariants \cites{KikutaTakahashi,KST20,FanFuOuchi,FanFilip}. 

In this paper, for complex abelian varieties, we show that these are governed by the numerical action and a symplectic translation invariant. 
We determine the full categorical entropy function and its polynomial refinement, prove a sharp dimension bound, and characterize the arithmetic of shifting numbers in terms of the nef cone.

Categorical entropy recovers classical entropy in both algebraic and symplectic geometry. For an automorphism $f$ of a smooth projective complex variety, Kikuta--Takahashi \cite{KikutaTakahashi}*{Theorem~5.5} proved that $h_0(\bL f^*)=h_{\text{top}}(f)$. Similarly, for a pseudo-Anosov mapping class $\phi$ of a closed oriented surface of genus at least two, the induced autoequivalence of its split-closed, $\bZ/2$-graded derived Fukaya category has categorical entropy equal to the topological entropy of a pseudo-Anosov representative \cite{DHKK}*{Theorem~2.17}.

%--------------------%--------------------%--------------------
\medskip
\noindent\textbf{Entropy and numerical growth.}
The results of Gromov and Yomdin identify the topological entropy of a surjective holomorphic self-map of a compact K\"ahler manifold with the logarithm of the spectral radius of its action on cohomology \cites{Gromov,Yomdin}. Its categorical analogue asks whether
$$ 
h_0(\Phi) \overset{?}{=} \log \rho(N(\Phi)), 
$$ 
where $N(\Phi)$ is the action on the numerical Grothendieck group. This equality was conjectured by Kikuta--Takahashi \cite{KikutaTakahashi}*{Conjecture~5.3}. It holds for curves \cite{Kikuta}, for varieties with ample or anti-ample canonical bundle \cite{KikutaTakahashi}, and for abelian surfaces and simple abelian varieties \cite{Yoshioka}, but fails in general \cites{FanCYEntropy,OuchiSpherical,Mattei}.
Nevertheless, the lower bound $h_0(\Phi)\ge\log\rho(N(\Phi))$ holds for every autoequivalence of a smooth proper category \cite{KST20}*{Theorem~2.13}; see also \cite{FanFuOuchi}*{Proposition~4.3}.

To capture the growth that remains after the leading exponential term, the notion of categorical polynomial entropy $h_t^{\pol}$ was introduced in \cite{FanFuOuchi}. Whenever the categorical Gromov--Yomdin equality
$h_0(\Phi)=\log\rho(N(\Phi))$ holds, categorical polynomial entropy satisfies the corresponding lower bound $h_0^{\pol}(\Phi)\geq s(N(\Phi))$ \cite{FanFuOuchi}*{Proposition~4.3}. Here $s(T)$ denotes the polynomial growth rate of an invertible linear operator $T$; it is one less than the maximal size of a Jordan block associated with an eigenvalue of modulus $\rho(T)$.

From now on, let $A$ be a complex abelian variety of dimension $g\geq1$, and let $\Db(A)$ denote its bounded derived category of coherent sheaves. Write $\tau(\Phi)$ for the common upper and lower shifting number of an autoequivalence of $\Db(A)$; their coincidence was proved in \cite{FanShifting} and will be recalled below. Our first result establishes the exponential and polynomial Gromov--Yomdin equalities for arbitrary abelian varieties.

\begin{thm}
\label{thm:main-entropy}
For every $\Phi\in\Aut\Db(A)$ and every $t\in\bR$, one has
$$
h_t(\Phi)=\log\rho(N(\Phi))+t\tau(\Phi),
\qquad
h_t^{\pol}(\Phi)=s(N(\Phi)).
$$
\end{thm}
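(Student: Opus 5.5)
The proof will compare both sides with explicit complexity computations, using Orlov's description of $\Aut\Db(A)$. Every autoequivalence is, up to translation $t_a^*$, tensoring by $P\in\mathrm{Pic}^0(A)$ and shift $[n]$, given by a kernel sheaf on $A\times A$. Orlov's map $\Phi\mapsto f_\Phi\in U(A\times\hat A)\subset\Aut(A\times\hat A)$ is then surjective onto the symplectic-type group, and its kernel is $A\times\hat A\times\bZ$. Translations and $\mathrm{Pic}^0$-twists act trivially on $N(\Db(A))$ and do not change entropy. After replacing $\Phi$ by a suitable power and modifying it by such elements, the problem therefore reduces to understanding $\Phi$ through $f_\Phi$.

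\textbf{Lower bounds.} The lower bound $h_0(\Phi)\ge\log\rho(N(\Phi))$ is \cite{KST20}*{Theorem~2.13}. Once the $t=0$ equality is known, $h_0^{\pol}(\Phi)\ge s(N(\Phi))$ follows from \cite{FanFuOuchi}*{Proposition~4.3}. For general $t$, we will use the standard inequality of DHKK type, $h_t\ge h_0+t\cdot(\text{shift rate})$ for $t$ of either sign. Combining it with the coincidence of upper and lower shifting numbers from \cite{FanShifting}, we expect $h_t(\Phi)\ge\log\rho+t\tau(\Phi)$, and similarly for the polynomial entropy.

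\textbf{Upper bound.} We take the split generator $G=\bigoplus_{i=0}^{g}L^{i}$ for an ample $L$, and we may even use $\cO_A$ together with point-like objects. We estimate $\delta_t(G,\Phi^nG)$ by $\sum_k\dim\Hom(G,\Phi^nG[k])e^{-kt}$, which is legitimate for smooth proper categories by \cite{DHKK}. Since $\Phi^n$ sends $L^i$ to a shifted semihomogeneous vector bundle $E_n[m_n]$ (Mukai/Orlov), each $\Hom^\bullet(L^i,E_n)$ is concentrated in a single degree (semihomogeneous bundles have cohomology in one degree, up to a locus that can be avoided), and its dimension equals $|\chi(L^{-i}\otimes E_n)|$. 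Thus the Hom-dimensions are $|\chi|$, a linear function of $N(\Phi)^n v$, bounded by $C\|N(\Phi)^n\|\sim C n^{s}\rho^n$. The degrees satisfy $m_n=n\tau(\Phi)+O(1)$. We will get this from the identification of the shift with a homogeneous quasimorphism on $U(A\times\hat A)$; it can also be read off, bounded defect included, from the phases of $\Phi^n$ acting on stability conditions, or from the degree in which $E_n$ has cohomology. This yields
$$\delta_t(G,\Phi^nG)\le C n^{s(N(\Phi))}\rho(N(\Phi))^n e^{-t(-n\tau+O(1))},$$
which gives both equalities with the correct sign convention for $\tau$.

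\textbf{Main obstacle.} The hard step is the uniform control of the shifts $m_n$ and the single-degree concentration for \emph{all} $n$, including non-generic $f_\Phi$ (non-simple $A$, where Yoshioka's argument fails). The issue is the generic vanishing argument for semihomogeneous sheaves. We will first handle it by showing the kernel of $\Phi^n$ is a shifted sheaf whose cohomological degree is the Maslov-type index of $f_{\Phi}^n$. The $O(1)$ defect will then come from the quasimorphism property. We expect the polynomial statement to need exactly this $O(1)$ control rather than $o(n)$.
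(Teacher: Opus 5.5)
\textbf{The upper bound on Hom-dimensions has a genuine gap.}

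Your lower bounds are fine. The shift bookkeeping is also fine, and easier than you expect. Convolving two coherent kernels gives a complex with cohomology in degrees $[-2g,g]$. So the coherent-kernel shift $\sh$ is a quasimorphism of defect at most $2g$, with homogenization $\tau$ and $|\sh-\tau|\le 2g$. Moreover, $\Hom(L_1,\Phi L_2[-k])\neq 0$ for line bundles forces $\sh(\Phi)-2g\le k\le \sh(\Phi)$. No Maslov index or stability conditions are needed.

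The gap is the claim that $\Hom^\bullet(L^i,\Phi^nL^j)$ lies in a single degree with total dimension $|\chi|$. This is false in general.

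\begin{itemize}
\item $\Phi^n(L^j)$ need not be a vector bundle; the Fourier--Mukai transform sends $\cO_A$ to a shifted skyscraper. It is a shifted $i_*E$, with $E$ simple semihomogeneous on a translated abelian subvariety. Writing $E=\pi_*M$, the relevant cohomology is that of a line bundle on an abelian variety.
\item That line bundle may be degenerate. By the Mumford--Kempf index theorem it is then either acyclic, or a pullback $\pi'^*M'$ along a quotient with $r$-dimensional connected kernel, with $h^{k+i}=\binom{r}{i}h^k(M')$. Its total cohomology is $2^r|\chi(M')|$, while its Euler characteristic is $0$.
\item This cannot be avoided with a fixed generator. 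Take $A=E\times E$ and $\Phi=-\otimes p_1^*\cO_E(x)$. Each summand $\Hom^\bullet(L^i,\Phi^nL^i)=H^\bullet(p_1^*\cO_E(nx))$ has total dimension $2n$ and $\chi=0$ for every $n$. Twisting $L$ by $\mathrm{Pic}^0$ changes nothing.
\end{itemize}

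Polynomial entropy measures exactly this kind of polynomial growth. So your inequality $\epsilon_0(G,\Phi^nG)\le C\|N(\Phi)^n\|$ remains unproved; it is true, but not for the reason you give.

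\textbf{The missing idea} is to bound total cohomology by several Euler characteristics rather than one. Suppose $M=\pi'^*M'$ as above and $H$ is ample. Then the coefficient of $m^r$ in the polynomial $\chi(M\otimes H^{m})$ equals $\chi(\ker\pi',H|_{\ker\pi'})\,\chi(Z',M')$, and the first factor is a positive integer. Polynomial interpolation then gives $\sum_ih^i(Z,M)\le C_N\max_{0\le m\le N}|\chi(M\otimes H^m)|$, with $C_N$ depending only on $N\ge\dim Z$.

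The paper applies this to the sheaf $K\otimes p_1^*L_2\otimes p_2^*L_1^\vee$ on $A\times A$, where $K$ is the coherent kernel. This sheaf is point-like, since its self-Ext groups are $\mathrm{HH}^\bullet(A)$. By de Jong--Olsson and Mukai it is therefore $f_*M$ with $f$ finite. The result is $\dim\Hom^\bullet(L_1,\Phi L_2)\le C\max_{0\le m\le 2g}|\chi(L_1\otimes H^{-m},\Phi(L_2\otimes H^m))|$.

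Each term on the right is a fixed linear functional of $N(\Phi)$, so $\epsilon_0(G,\Phi G)\le C\|N(\Phi)\|$ uniformly in $\Phi$. Combined with the shift window, this gives $\epsilon_t(G,\Phi G)\asymp e^{t\tau(\Phi)}\|N(\Phi)\|$. Applying this to $\Phi^n$ yields both equalities. With this lemma inserted, your route through the images $\Phi^n(L^j)$ would also work.
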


In particular, polynomial entropy is independent of $t$; we write it as
$h^{\pol}(\Phi)$.
The proof gives more than the two limiting equalities. 
For every fixed split generator $G$ and parameter $t$, we establish the uniform comparison
$$
\epsilon_t(G,\Phi G)\asymp_{A,G,t}
 e^{t\tau(\Phi)}\|N(\Phi)\|,
$$
where $\epsilon_t$ is the weighted total dimension of Hom-spaces and the comparison constants are independent of $\Phi$.

%--------------------%--------------------%--------------------
\medskip
\noindent\textbf{A sharp bound for polynomial entropy.}
Our second result gives a uniform upper bound for polynomial entropy depending only on the dimension of $A$.

\begin{thm}
\label{thm:main-poly-bound}
For every $\Phi\in\Aut\Db(A)$,
$$
h^{\pol}(\Phi)\leq g^2.
$$
Moreover, this bound is attained in every dimension by an autoequivalence of a power of an elliptic curve.
\end{thm}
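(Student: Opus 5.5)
By Theorem~\ref{thm:main-entropy}, $h^{\pol}(\Phi)=s(N(\Phi))$. The task therefore reduces to linear algebra: bound the size of the largest Jordan block of $N(\Phi)$ for an eigenvalue of maximal modulus, and show that this size is at most $g^2+1$.

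The structure of $N(\Phi)$ comes from Orlov's description of autoequivalences of $\Db(A)$. Such a $\Phi$ is determined, up to shifts, translations and twists by $\mathrm{Pic}^0$, by a symplectic isomorphism
$$
f_\Phi\colon A\times\hat A\to A\times\hat A .
$$
The action on cohomology $H^*(A,\bQ)=\bigw H^1(A,\bQ)$ is, up to sign, the spin representation of the rational symplectic matrix on $H^1(A\times\hat A,\bQ)=H^1(A)\oplus H^1(A)^\vee\cong\bQ^{4g}$. That is, $\Phi^H$ factors through $\mathrm{Spin}(V\oplus V^\vee)$ acting on $\bigw V$, where $V=H^1(A,\bQ)$ has dimension $2g$. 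The numerical Grothendieck group $N(A)\otimes\bR$ is an invariant subspace; it is the image of the algebraic classes. So $s(N(\Phi))\le s(\Phi^H|_{H^*})$, and it is enough to bound Jordan blocks in the half-spin (or full-spin) representation of $\mathrm{Spin}(2g,2g)$, using weights.

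To do this, write the element $\gamma=f_\Phi^H\in SO(V\oplus V^\vee)$ in multiplicative Jordan form, $\gamma=\gamma_s\gamma_u$. Let $\lambda_1,\dots,\lambda_{4g}$ be its eigenvalues, which come in pairs $\lambda,\lambda^{-1}$, with unipotent part $\gamma_u=\exp(n)$. In the spin representation the eigenvalues are $\prod_i\lambda_i^{\pm1/2}$, and the maximal-modulus eigenspace is the weight space for the "top" weight $\mu$ determined by the eigenvalues of modulus greater than $1$. The Jordan block size there is governed by how $n$ acts on the generalized eigenspace, which is a representation of the centralizer of $\gamma_s$. The nilpotent $n$ lies in the Lie algebra of this centralizer, and the relevant piece is a representation of it generated by the eigenvalue-$\le1$ part.

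The crude bound I would aim for comes from the fact that $\bigw V$ is a quotient of a module graded by exterior degree. The nilpotent $n\in\mathfrak{so}(V\oplus V^\vee)=\End V\oplus\wedge^2V\oplus\wedge^2V^\vee$ shifts degree by at most $2$ in each of the two grading directions. A cleaner route is via $\mathfrak{sl}_2$-theory. By Jacobson--Morozov, $n$ is embedded in an $\mathfrak{sl}_2$-triple commuting with $\gamma_s$. The largest Jordan block in a representation has size $1+\max\langle h,\text{weight}\rangle$. For the spin representation of $\mathfrak{so}(4g)$, the weights are $\tfrac12(\pm e_1\pm\cdots\pm e_{2g})$. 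The neutral element $h$ of a nilpotent orbit in $\mathfrak{so}(4g)$ has coordinates bounded by the largest Jordan partition, which is $4g-1$ for a regular nilpotent, with $h=(4g-2,4g-4,\dots,2)$. This would give $\tfrac12\sum=g(2g-1)\cdot\ldots$, which is too big. So the real constraint must come from $\gamma$ being symplectic on $H^1(A\times\hat A,\bZ)$ with respect to the form $E$, and from its commuting with the complex structure up to Hodge type. Concretely, $f_\Phi$ is an endomorphism of an abelian variety of dimension $2g$, and the rational representation of $\End(A\times\hat A)\otimes\bQ$ is $\rho_a\oplus\overline{\rho_a}$, where $\rho_a$ is the analytic representation on $\bC^{2g}$. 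Hence the unipotent part acts on $H^1\otimes\bC=T\oplus\bar T$ with the same Jordan type on each summand, with partition of $2g$ lying in $\Sp$-compatible form. I would use this to show that the $h$-eigenvalues on $V\oplus V^\vee$ come in duplicated pairs with $|h_i|\le 2g-1$-type bounds, arranged so that the spin weight satisfies $\tfrac12\sum_{h_i>0}h_i\le g^2$. The extremal case is $h=(2g-1,2g-3,\dots,1)$ doubled, summing to $2g^2$, whose half is $g^2$. This gives Jordan size $\le g^2+1$, hence $s\le g^2$, after restricting to the maximal-modulus eigenspace (the $\mathfrak{sl}_2$ commuting with $\gamma_s$ can only reduce it). This step, verifying that the Hodge and complex-structure constraint forces the doubled partition bound, is the main obstacle.

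For sharpness, take $A=E^g$ with $E$ an elliptic curve. Here $A\times\hat A\cong E^{2g}$, and $\Sp(4g,\bZ)$-type elements arising from $\SL_2(\bZ)$-matrices tensored with unipotent $\Sp(2g,\bZ)$ elements act, for instance $\Phi$ built as (line bundle twist) $\circ$ (automorphism of $E^g$ given by a regular unipotent matrix in $\mathrm{GL}_g(\bZ)$) composed appropriately, so that the analytic representation is a single regular unipotent Jordan block of size $2g$ in $\Sp(2g)$. Concretely, I would take $\Phi=(\otimes L)\circ u^*$ with $u$ a regular unipotent in $\mathrm{GL}_g(\bZ)$ and $L$ an ample product polarization. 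The resulting $h$ is the principal one, the spin weight gives block size $g^2+1$, and since all eigenvalues equal $1$ the top weight vector, e.g.\ the class of a point or $\mathrm{ch}(L^{k})$, is algebraic. So the block lies in $N(A)$, and $s(N(\Phi))=g^2$. I would check this last point by computing that $\mathrm{ch}(\cO)$ generates the full Jordan chain, using the growth $\chi(L^{\otimes n}\otimes\cdots)$ of degree $g^2$ in $n$.
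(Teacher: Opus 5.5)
Your reduction to $s(N(\Phi))$, your use of the Golyshev--Lunts--Orlov spin description, your observation that $u_\Phi$ preserves $T\oplus\bar T$, and your choice of example all match the paper. However, both the decisive inequality and the sharpness are left unproved.

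\textbf{Upper bound.} The step you call the main obstacle is the entire content of the bound. Your ``$|h_i|\le 2g-1$-type bounds'' cannot supply it, since they only yield $g(2g-1)$. Here is what closes it.

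Since $u_\Phi$ commutes with the complex structure, it preserves the $\pm i$-eigenspaces $T,\bar T$ of $H_1(A\times\widehat A,\bC)$. These are complementary, and they are isotropic for the symmetric form $Q$ because the complex structure is $Q$-orthogonal. Hence the spinor module is $\bigwedge^\bullet T^\vee$, up to a character. It follows that $\Phi^H$ is a scalar times the total exterior power of an operator on a $2g$-dimensional space; this is Theorem~\ref{thm:GLO-exterior-PhiH}.

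Now suppose the unipotent part has Jordan sizes $k_1,\dots,k_s$ on $T$, with $\sum_j k_j=2g$. Then:
\begin{enumerate}[label=(\roman*)]
\item the $h$-eigenvalues on $T$ are the strings $k_j-1,k_j-3,\dots,1-k_j$;
\item the largest spin weight is the sum of the positive ones, namely $\sum_j\lfloor k_j^2/4\rfloor$;
\item superadditivity gives $\sum_j\lfloor k_j^2/4\rfloor\le\lfloor (2g)^2/4\rfloor=g^2$.
\end{enumerate}
This is exactly Lemma~\ref{lem:linear-algebra-g^2}. The paper proves it by a height filtration on wedge monomials rather than by Jacobson--Morozov, but the two computations are equivalent.

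Separately, your inequality $s(N(\Phi))\le s(\Phi^H)$ is false in general, because $\rho(N(\Phi))$ may be strictly smaller than $\rho(\Phi^H)$. The valid comparison is $s(N(\Phi))\le d_{\max}(\Phi^H)-1$. Your weight argument bounds this quantity anyway, so the fix costs nothing.

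\textbf{Sharpness.} A Jordan block of size $g^2+1$ in $\Phi^H$ does not by itself give $s(N(\Phi))=g^2$. You need a class $x\in N(A)_\bR$ with $(N(\Phi)-1)^{g^2}x\neq 0$. Neither ``all eigenvalues equal $1$'' nor algebraicity of a vector at the top of the chain provides such a class. Your claim that the analytic representation is a single block of size $2g$ is also only asserted, not checked.

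Your fallback criterion is the right one. With $L_n=\bigotimes_{r<n}(f^r)^*L$, one has $\chi(\cO,\Phi^n\cO)=\chi(L_n)=\Delta_n(f,H)^g/g!$. If this has degree exactly $g^2$ in $n$, then $s(N(\Phi))\ge g^2$. But that degree statement is precisely $\Plov(f)=g^2$, which is the computation in Lin--Oguiso--Zhang's Example~6.4, and you have not carried it out.

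The paper closes this gap by citing that computation. It then uses Proposition~\ref{prop:plov-identification} to identify $h^{\pol}((-\otimes L)\circ\bL f^*)$ with $\Plov(f)$, via the comparison $\epsilon_0(G_-,\Phi^nG_+)\asymp\Delta_n(f,H)^g$.
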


The geometric counterpart is \emph{polynomial log-volume growth} studied by Cantat--Paris-Romaskevich \cite{CPR} and Lin--Oguiso--Zhang \cite{LOZ}. For a zero-entropy automorphism $f$ of a smooth projective variety $X$ of dimension $d$ and an ample class $H$, put
$$
\Delta_n(f,H)=\sum_{r=0}^{n-1}(f^r)^*H,
\qquad
\Plov(f)=\limsup_{n\to\infty}
\frac{\log\left(\Delta_n(f,H)^d\right)}{\log n}.
$$
This invariant is independent of the choice of $H$. Lin--Oguiso--Zhang proved the sharp bound $\Plov(f)\leq g^2$ for complex tori of dimension $g$ \cite{LOZ}*{Theorem~1.4}; Hu--Jiang subsequently extended this bound to arbitrary normal projective varieties \cite{HuJiang}*{Corollary~1.2}.

We relate this invariant directly to categorical dynamics.
If $L$ is ample on a smooth projective variety $X$ and $f$ has zero topological entropy, then Proposition~\ref{prop:plov-identification} gives
$$
h^{\pol}((-\otimes L)\circ \bL f^*)=\Plov(f).
$$
Thus Theorem~\ref{thm:main-poly-bound} recovers the log-volume bound for abelian varieties and extends it from these geometric autoequivalences to all derived autoequivalences.

%--------------------%--------------------%--------------------
\medskip
\noindent\textbf{Universality of shifting numbers.}
Theorem~\ref{thm:main-entropy} separates the entropy function into a numerical intercept and a shifting slope.
In \cite{FanFilip}, we introduced upper and lower shifting numbers, which describe the asymptotic slopes of the entropy function:
$$
 \tau^\pm(\Phi)=\lim_{t\to\pm\infty}\frac{h_t(\Phi)}{t}.
$$
For abelian varieties, these two numbers coincide \cite{FanShifting}*{Theorem~3.1}; we denote their common value by $\tau(\Phi)$. 
Our next theorem identifies their common value for abelian varieties with a classical symplectic invariant.

Let $\gamma_A\colon\Aut\Db(A)\twoheadrightarrow U(A)$ be Orlov's homomorphism \cite{Orlov}*{Theorem~4.14}l; see Section~\ref{subsec:2.2}.
The duality pairing and complex structure equip $W_A=H_1(A\times\widehat A,\bR)$ with a symplectic form $\Omega_A$.
The homological action gives a faithful representation
$$
r_A\colon U(A)\hookrightarrow\Sp(W_A,\Omega_A);
$$
see Section~\ref{subsec:5.2}.
Write $\pi\colon\widetilde{\Sp}(W_A,\Omega_A) \longrightarrow\Sp(W_A,\Omega_A)$ for the universal covering homomorphism, with central kernel $\bZ\cong\langle z\rangle$.
By \cite{BIW10}*{Corollary~7.10}, there is a unique continuous homogeneous quasimorphism
$$
\widetilde{\Rot}\colon \widetilde{\Sp}(W_A,\Omega_A)\longrightarrow\bR
$$
satisfying $\widetilde{\Rot}(z)=1$.
It is one-half of the Barge--Ghys symplectic translation quasimorphism \cite{BargeGhys}; see also \cite{BIW10}*{Remark~7.11(2)}.

\begin{thm}
\label{thm:main-universal-shifting}
There is a unique group homomorphism
$$
\alpha_A\colon\Aut\Db(A)\longrightarrow \widetilde{\Sp}(W_A,\Omega_A)
$$
satisfying $\pi\circ\alpha_A=r_A\circ\gamma_A$ and
$$
\tau(\Phi)=\widetilde{\Rot}(\alpha_A(\Phi)) \qquad \text{for every } \Phi\in\Aut\Db(A).
$$
\end{thm}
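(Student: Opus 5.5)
Both existence and uniqueness hinge on the structure of $\Aut\Db(A)$. By Orlov, $\Aut\Db(A)$ is an extension
$$
1\to \bZ\times A\times\widehat A\to \Aut\Db(A)\xrightarrow{\gamma_A} U(A)\to 1,
$$
where $\bZ$ is generated by the shift $[1]$, and $A\times\widehat A$ acts by translations twisted by line bundles in $\mathrm{Pic}^0$.

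\emph{Uniqueness.} Suppose $\alpha,\alpha'$ are two homomorphisms with $\pi\circ\alpha=\pi\circ\alpha'=r_A\circ\gamma_A$. Then $\alpha'\alpha^{-1}$ takes values in the center $\langle z\rangle$, so $c(\Phi)=\alpha'(\Phi)\alpha(\Phi)^{-1}$ defines a homomorphism $c\colon\Aut\Db(A)\to\bZ$. Homogeneous quasimorphisms are additive on commuting elements and $\widetilde{\Rot}(z)=1$. Hence the shifting identity for both lifts forces $c(\Phi)=0$ for every $\Phi$.

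\emph{Existence.} I would construct $\alpha_A$ by realizing $\Aut\Db(A)/(A\times\widehat A)$ as a pullback of $\widetilde{\Sp}$, using the Maslov/stability-condition picture. Concretely:
\begin{enumerate}[label=(\arabic*)]
\item Use the action of $\Aut\Db(A)$ on a connected component (or a suitable family) of geometric stability conditions. Alternatively, and more robustly in arbitrary dimension, use the graded Lagrangian description: $\Phi$ acts on $H^*(A\times\widehat A)$ via $r_A\gamma_A(\Phi)\in\Sp(W_A)$, and the kernel object $E_\Phi$ on $A\times A$ has a well-defined degree.
\item Define $\alpha_A(\Phi)$ to be the homotopy class of a path from $\id$ to $r_A\gamma_A(\Phi)$ chosen canonically: lift via the Maslov index of the graph of the symplectic map relative to the Lagrangian $H_1(A)$ (the subspace corresponding to skyscraper sheaves). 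Fix the lift by requiring that the cohomological degree of $\Phi(\cO_x)$ matches the Maslov correction.
\item Check the homomorphism property: $[1]$ must map to $z$, since $\widetilde{\Rot}(z)=1=\tau([1])$. Translations and $\mathrm{Pic}^0$-twists must map to the identity, since they act trivially on $W_A$ and preserve degrees.
\end{enumerate}

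\emph{Shifting identity.} Both $\tau$ and $\widetilde{\Rot}\circ\alpha_A$ are homogeneous quasimorphisms on $\Aut\Db(A)$. Since $\tau$ is a homogeneous quasimorphism, as follows from \cite{FanShifting}, their difference is a homogeneous quasimorphism vanishing on the shift. I would then show the difference $\delta$ vanishes. On elements $\Phi$ with $r_A\gamma_A(\Phi)$ lying in a maximal compact $U(2g)$ (elliptic, finite order modulo shifts in $U(A)$), some power of $\Phi$ is a shift composed with a translation and twist. Both invariants then agree by direct computation, since $\tau([k])=k$.

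For hyperbolic or unipotent elements, $\widetilde{\Rot}$ is computed from the eigenvalues on the unit circle only. In that case I would compute $\tau$ by explicitly bounding $\Phi^n(\cO_x)$: its cohomology concentrates in degrees determined by the Maslov winding of $r_A\gamma_A(\Phi)^n$ applied to $H_1(A)$, with $O(1)$ error. The kernel of the degree-in-$n$ computation is precisely the Barge--Ghys translation number, via its characterization as a limit of Maslov indices (\cite{BIW10}).

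The main obstacle is this last comparison for general elements of $U(A)$. There the growth of $\Phi^n$ mixes elliptic rotation with hyperbolic expansion, and one must control the phases of $\Phi^n G$ uniformly in $n$, presumably via the uniform comparison $\epsilon_t\asymp e^{t\tau}\|N(\Phi)\|$ established for Theorem~\ref{thm:main-entropy}.
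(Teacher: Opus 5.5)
\textbf{Verdict.} Your uniqueness argument is correct and is the same as the paper's. Existence and the shifting identity, however, each have a genuine gap.

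\textbf{Existence of $\alpha_A$.} Steps (2)--(3) give at best a set-theoretic lift of $r_A\circ\gamma_A$. Checking that $[1]\mapsto z$ and that $A\times\widehat A$ goes to the identity verifies multiplicativity only on $\ker\gamma_A$.

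For general $\Phi,\Psi$, the discrepancy $\alpha_A(\Phi\Psi)\bigl(\alpha_A(\Phi)\alpha_A(\Psi)\bigr)^{-1}\in\langle z\rangle$ compares two cocycles:
\begin{itemize}
\item Orlov's integral cocycle $c_A$, given on the big cell by $p_A(b_1^{-1}b_{12}b_2^{-1})-g$;
\item the cocycle of $\widetilde{\Sp}(W_A,\Omega_A)$ pulled back along $r_A$ and computed relative to your section.
\end{itemize}
Proving that these agree, with the integral normalization that sends $[1]$ to $z$ and not to another power, is the whole content of existence. The proposal does not address it. ``Fixing the lift by the degree of $\Phi(\cO_x)$'' yields at best a map whose failure to be multiplicative is a bounded $\langle z\rangle$-valued cocycle, and nothing forces that cocycle to vanish. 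Your Maslov-index intuition is the right flavour, but the cocycle comparison is not carried out.

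The paper obtains this step from two ingredients:
\begin{itemize}
\item Polishchuk's isomorphism $\widetilde U(A)\cong U_A^\Delta|_{U(A)}$ (Proposition~\ref{prop:normalized-degree-comparison}), normalized on $B_A$ by $\lim_{t\to\infty}\real f_u(itH)=-g/2$;
\item the new compatibility, Proposition~\ref{prop:degree-determinant}: the equivariant embedding $\iota_A(\omega)=\omega_*J$ of $D_A$ into $\cH(W_A)$ satisfies $j_{\det}(r_A(u),\iota_A(\omega))=\deg(a+b\omega)$.
\end{itemize}
Together these identify $U_A^\Delta$ with the pullback of $\cE_{\det}(W_A)\cong\widetilde{\Sp}(W_A,\Omega_A)$.

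\textbf{The identity $\tau=\widetilde{\Rot}\circ\alpha_A$.} Your case analysis does not close.
\begin{itemize}
\item The elliptic case as you treat it covers only $\gamma_A(\Phi)$ of finite order.
\item The case you leave open is $r_A(u)$ with an elliptic part of infinite order alongside hyperbolic parts. This is not marginal: it is exactly where the transcendental shifting numbers of Theorem~\ref{thm:main-shifting-spectrum} come from.
\item Agreement cannot be propagated from a few conjugacy types. $U(A)$ can be $\SL_2(\bZ)$, which carries many nonzero homogeneous quasimorphisms, so every hyperbolic and mixed element needs an honest computation.
\item The estimate $\epsilon_t\asymp e^{t\tau}\|N(\Phi)\|$ takes $\tau$ as input and contains no symplectic information, so it cannot identify $\tau$ with $\widetilde{\Rot}$.
\end{itemize}
Your heuristic (degrees governed by ``Maslov winding up to $O(1)$'') is the right statement. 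It must, however, be proved uniformly over all elements.

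The paper proves exactly that uniform statement:
\begin{itemize}
\item It pulls back $q_{Z_0}=\real F(Z_0)$ to a function $q_A$ on $\widetilde U(A)$.
\item It shows $|q_A-\sh|\le 3g$ everywhere. For this it writes $u=u_1u_2$ with $u_i\in B_A$ (bigness) and uses three bounds: the sector bound $|\real F(Z_1)-\real F(Z_0)|\le g$ from Lemma~\ref{lem:qz0defectn/2}, the $-g/2$ normalization, and $c_A(u_1,u_2)\in[-g,0]$.
\item Homogenizing via Lemmas~\ref{lemma-2-sh-tau-defect} and~\ref{lem:qz0defectn/2} then gives $\tau=\widetilde{\Rot}\circ\alpha_A$ for all $\Phi$ at once, with no Jordan-type analysis.
\end{itemize}
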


This extends Orlov’s symplectic description for powers of an elliptic curve without complex multiplication \cite{Orlov}*{Example 4.15}, and the Lie-group description of shifting numbers on a finite-index subgroup for abelian surfaces in \cite{FanFilip}*{Theorem 5.2.2}.
It also implies
$$
e^{2\pi i\tau(\Phi)}\in\overline{\bQ} \qquad \text{for every } \Phi\in\Aut\Db(A).
$$
Consequently, every shifting number is rational or transcendental by Gelfond--Schneider; see Corollary~\ref{cor:shifting-rational-or-transcendental}.

%--------------------%--------------------%--------------------
\medskip
\noindent\textbf{Shifting spectra and the nef cone.}
Composing with a shift changes $\tau$ by an integer. Define the \emph{reduced shifting spectrum} by
$$
\Sigma^{\red}(A)=\{\tau(\Phi)+\bZ:\Phi\in\Aut\Db(A)\}\subseteq\bR/\bZ.
$$
We say that $A$ satisfies \emph{Condition~$(\star)$} if it is isogenous to a product $B_1\times\cdots\times B_s$ of pairwise nonisogenous simple abelian varieties, each with Picard number one.

\begin{thm}
\label{thm:main-shifting-spectrum}
The following conditions are equivalent:
\begin{enumerate}[label=(\alph*)]
\item $\Sigma^\red(A)$ is finite;
\item\label{item:thm1.4-b} $A$ satisfies Condition~$(\star)$;
\item\label{item:thm1.4-c} $\Nef(A)\subseteq\NS(A)_\bR$ is rational polyhedral;
\item every shifting number of $\Phi\in\Aut\Db(A)$ is rational.
\end{enumerate}
Moreover, if these conditions fail, then $\Sigma^\red(A)$ is dense in $\bR/\bZ$, and some autoequivalence has transcendental shifting number.
\end{thm}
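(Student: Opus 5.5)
The plan is to derive everything from Theorem~\ref{thm:main-universal-shifting}. By that theorem, $\tau(\Phi)$ equals the translation number $\widetilde{\Rot}(\alpha_A(\Phi))$ of a lift of $r_A(\gamma_A(\Phi))\in\Sp(W_A,\Omega_A)$. Modulo $\bZ$, this translation number depends only on the \emph{rotation part} of $r_A(\gamma_A(\Phi))$. In concrete terms, for a symplectic matrix whose elliptic part has eigenvalues $e^{\pm 2\pi i\theta_j}$ (counted with Krein signature), the homogeneous quasimorphism satisfies
$$
\widetilde{\Rot}\equiv \tfrac12\textstyle\sum_j \pm\theta_j \pmod{\bZ}.
$$
Here the factor $\tfrac12$ matches the normalization $\widetilde{\Rot}(z)=1$. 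Hyperbolic and unipotent parts contribute only $0$ or $\tfrac12$, coming from negative real eigenvalues. So the first step is to record this description of $\widetilde{\Rot}$ modulo $\bZ$. It reduces the problem to deciding which unit-modulus eigenvalues occur for elements of the arithmetic group $U(A)$. Since $U(A)$ is commensurable with the integral points of an algebraic group $G_A$ attached to $\End(A\times\widehat A)$, these eigenvalues are algebraic, which already gives $e^{2\pi i\tau}\in\overline{\bQ}$.

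Next I would use the standard classification. Condition~$(\star)$ is equivalent to the nef cone being rational polyhedral; this is the equivalence (b)$\Leftrightarrow$(c), via the structure of $\NS(A)_\bQ$ as the symmetric elements of the endomorphism algebra. Under an isogeny to $B_1\times\cdots\times B_s$ with $\rho(B_i)=1$, each $\End(B_i)_\bQ$ is a totally real field $K_i$, or the corresponding definite/totally real type with Picard number one. The relevant group $U(A)$ is then commensurable with a product of groups of the form $\SL_2(\mathcal O_{K_i})$ or their unitary analogues. These act on $W_A$ through embeddings of $K_i$ into $\bR$, and only through the split real forms $\SL_2(\bR)$.

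For (b)$\Rightarrow$(d), take $\gamma$ in $\SL_2(\mathcal O_K)$. An elliptic $\gamma$ has finite order, because its eigenvalues are roots of unity of bounded degree: a totally real trace in $(-2,2)$ under all real embeddings forces finite order by Kronecker. Hence every rotation angle is rational and $\tau\in\bQ$. For (d)$\Rightarrow$(a), finite-index considerations bound the denominators. Indeed, the orders of torsion elements in an arithmetic group are bounded, and the quasimorphism restricted to a torsion-free finite-index normal subgroup takes values in $\tfrac12\bZ$. A homogeneous quasimorphism is additive on powers, so $\tau(\Phi)\in\tfrac{1}{2N}\bZ$ for a uniform $N$, which gives finiteness. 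Strictly speaking I would prove (b)$\Rightarrow$(a) and (b)$\Rightarrow$(d) directly this way, and close the cycle by proving $\neg$(b)$\Rightarrow\neg$(a) and $\neg$(b)$\Rightarrow\neg$(d).

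The main obstacle is this converse. If $(\star)$ fails, then either some simple factor has Picard number at least two, or two factors are isogenous, so some $\End(B_i^{n_i})_\bQ$ is larger than a totally real field acting on a rank-one piece. In each case I would exhibit inside $G_A$ a $\bQ$-subgroup whose real points contain a compact torus acting nontrivially on $W_A$ with varying angles. Examples are a $\bQ$-anisotropic torus $R_{F/\bQ}$ coming from a CM or non-totally-real quadratic extension inside the endomorphism algebra, or $\SL_2$ over a field with a non-split real place. Its arithmetic points include elements of infinite order that are elliptic at some real place. Their angle $\theta$ satisfies $e^{2\pi i\theta}\in\overline{\bQ}\setminus\mu_\infty$, so $\theta$ is irrational, and by Gelfond--Schneider it is transcendental, giving $\tau\notin\bQ$. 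Density of $\Sigma^{\red}(A)$ then follows because the multiples $n\tau(\Phi)$ are equidistributed modulo $\bZ$. The delicate part is making sure the irrational angles from different eigenvalue pairs do not cancel in the signed sum. I would handle this by choosing the torus inside a single factor and checking its Krein signatures, perhaps by a Galois-conjugate argument showing the sum is $\tfrac12(\theta-\theta')$ with $\theta'$ not congruent to $\theta$.
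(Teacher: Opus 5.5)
Routing the problem through Theorem~\ref{thm:main-universal-shifting} and eigenvalue data is a legitimate alternative to the paper's direct use of Orlov's cocycle. However, both directions of your argument have genuine gaps.

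\textbf{Finite direction.} Your structural input is wrong, and the Kronecker step built on it fails.

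If $B$ is simple with $\rho(B)=1$, then $\End^0(B)^\dagger\cong\NS(B)_\bQ=\bQ$. So $\End^0(B)$ is $\bQ$, an imaginary quadratic field, or a definite quaternion algebra over $\bQ$. It is never a totally real field of degree $>1$. Accordingly, $U(B)$ is not virtually $\SL_2(\mathcal O_K)$. Instead it contains the scalar group $\iota_\lambda(\Gamma^0(m))$ with finite index. This is Proposition~\ref{prop:picard-1-finite-index}, proved via the compact kernel of $U(B)_\bR\to\PSL_2(\bR)$, properness, and a lattice argument. You assert this commensurability without proof, and in the wrong form.

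This is not cosmetic. For $[K:\bQ]>1$, the claim ``elliptic $\Rightarrow$ finite order'' is false. Kronecker needs the trace in $(-2,2)$ at \emph{every} real place, but an element of $\SL_2(\mathcal O_K)$ can be elliptic at one place and hyperbolic at the others. Such an element has infinite order and a transcendental angle. This is exactly how the paper produces irrational shifting numbers when $(\star)$ fails.

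So your (b)$\Rightarrow$(d) argument, as written, would prove a false statement. It becomes valid only after the single-real-place structure is established. At that point the cleanest route is the paper's: combine $\overline\tau_B(\iota_\lambda(h))=\dim(B)\rot(h)$, $\rot(\SL_2(\bZ))=\{0,\frac12,\frac13,\frac23\}$, compatibility of the cocycle with isogenies and products, and the finite-index lemma. That also replaces your unproved claim about the values on a torsion-free subgroup.

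\textbf{Converse.} This is the substance of the theorem, and the proposal does not supply it.

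First, you never identify what the failure of $(\star)$ produces inside $U(A)_\bQ$. The paper extracts a Rosati-fixed totally real field $\bQ\subsetneq L\subseteq\End^0(B)^\dagger$ on a polarization-orthogonal isogeny factor $B$. This field comes either from a simple factor with $\rho>1$, or from two isogenous Picard-one factors via $x_t=\begin{pmatrix}0&f^\dagger\\ f&t\end{pmatrix}$. It yields an embedding $\SL_2(L)\hookrightarrow U(B)_\bQ$. Your proposed sources are not shown to lie in $U(A)_\bQ$, and the CM one cannot work. A norm-one torus of a CM extension has compact real points, so its arithmetic points are finite and have rational angles. Infinite order forces a hyperbolic component somewhere.

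Second, the cancellation problem you flag is left open, and your suggested criterion is insufficient: irrationality needs $\theta-\theta'\notin\bQ$, not merely $\theta'\not\equiv\theta$. The paper resolves this in two steps. Through Orlov's cocycle and Lemma~\ref{lem:pb-lambda-v}, it computes $\overline\tau_B(T_h)=\frac{\dim B}{[L:\bQ]}\sum_\sigma\rot(\sigma(h))$, so every real place contributes its $\SL_2(\bR)$ rotation number with the same positive weight. It then chooses $h=\begin{pmatrix}1+Mx&M\\ x&1\end{pmatrix}$ with $x=M(n\eta+q)$, elliptic at exactly one real embedding and hyperbolic at all others. Then only one transcendental angle survives.

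You also still need to clear denominators so that the transported element lies in $U(A)$. Then Orlov's surjectivity gives an actual autoequivalence.

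A minor point: with the normalization $\widetilde{\Rot}(z)=1$, the elliptic contribution is $\sum_j\pm\theta_j$, not half of it.
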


The equivalence of \ref{item:thm1.4-b} and \ref{item:thm1.4-c} is due to Bauer \cite{Bauer}. 
The transcendental case has a consequence for a question of Dimitrov--Haiden--Katzarkov--Kontsevich \cite{DHKK}*{Section~4.1}, who asked whether entropy graphs are generally algebraic over $\bQ$ in the coordinates $(e^t,e^{h_t})$.
For the affine entropy functions here, an irrational slope already rules out every polynomial relation, even with complex coefficients; see Lemma~\ref{lemma:entropy-graph-criterion}.

\begin{cor}
\label{cor:main-DHKK-algebraicity}
There exist a complex abelian variety $A$ and an autoequivalence $\Phi\in\Aut\Db(A)$ such that
$$
\{(e^t,e^{h_t(\Phi)}):t\in\bR\}
$$
is not contained in the zero locus of any nonzero polynomial in $\bC[x,y]$.
\end{cor}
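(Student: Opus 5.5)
Corollary~\ref{cor:main-DHKK-algebraicity} follows by combining the transcendental (or merely irrational) case of Theorem~\ref{thm:main-shifting-spectrum} with the affine formula of Theorem~\ref{thm:main-entropy}. First I choose an abelian variety violating Condition~$(\star)$. The simplest choice is $A=E\times E$ for any complex elliptic curve $E$: it is not isogenous to a product of pairwise nonisogenous simple factors, since both factors are $E$. Equivalently, $\NS(A)$ has rank at least $3$, and $\Nef(A)$ is a round cone over a disc, hence not rational polyhedral. By the final assertion of Theorem~\ref{thm:main-shifting-spectrum}, there is then some $\Phi\in\Aut\Db(A)$ whose shifting number $\tau=\tau(\Phi)$ is transcendental, and in particular irrational.

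Next I apply Theorem~\ref{thm:main-entropy}: $h_t(\Phi)=c+t\tau$ with $c=\log\rho(N(\Phi))$. The graph is therefore $\{(x,e^{c}x^{\tau}):x>0\}$, obtained by setting $x=e^t$. It remains to show (this is Lemma~\ref{lemma:entropy-graph-criterion}, which I would prove directly) that no nonzero $P(x,y)=\sum a_{ij}x^iy^j\in\bC[x,y]$ vanishes on this curve. Substituting gives
$$
\sum_{i,j}a_{ij}e^{jc}x^{i+j\tau}=0\quad\text{for all }x>0.
$$
Because $\tau$ is irrational, the exponents $i+j\tau$ are pairwise distinct for distinct pairs $(i,j)$. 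Functions $x^{\lambda}$ with distinct real exponents $\lambda$ are linearly independent on $(0,\infty)$. To see this, substitute $x=e^t$ to get distinct exponentials $e^{\lambda t}$, and either take the asymptotically dominant term as $t\to\infty$ or use a Vandermonde/Wronskian argument. Hence every $a_{ij}e^{jc}=0$, so $P=0$.

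I expect no serious obstacle, since all of the depth lies in Theorem~\ref{thm:main-shifting-spectrum}. The only care needed is to cite the ``moreover'' clause correctly and to verify that the chosen $A$ fails Condition~$(\star)$, which is immediate for $E\times E$. Note that only irrationality is used, not transcendence. Irrationality already follows from the density of $\Sigma^{\red}(A)$, or from the failure of condition (d).
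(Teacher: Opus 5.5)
Your argument is correct and is essentially the paper's proof: pick $A$ failing Condition~$(\star)$, take the autoequivalence from Theorem~\ref{thm:condition-star-fail}, and apply the linear-independence argument of Lemma~\ref{lemma:entropy-graph-criterion} to the affine formula of Theorem~\ref{thm:main-entropy}; your explicit choice $A=E\times E$ simply makes the existence of such an $A$ concrete. One small slip in your closing remark: density of $\Sigma^{\red}(A)$ alone does not give an irrational value, since $\bQ/\bZ$ is already dense in $\bR/\bZ$. Irrationality should instead come from the transcendence clause or from the failure of condition~(d), as in your main line.
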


%--------------------%--------------------%--------------------
\medskip
\noindent\textbf{Outline of the proofs.}
The uniform comparison underlying Theorem~\ref{thm:main-entropy} begins with Orlov's purity theorem \cite{Orlov}, together with the results of Mukai and de Jong--Olsson \cites{Mukai_shvb,deJongOlsson}, realizes the Fourier--Mukai kernel as a finite pushforward of a line bundle.
The Mumford--Kempf index theorem, Riemann--Roch, and polynomial interpolation then bound total cohomology by finitely many Euler characteristics, with constants uniform in the kernel (Proposition~\ref{prop:unif-cohomo-estimate}).
These Euler characteristics are fixed linear functionals of the
numerical action. 
A bounded comparison between kernel shifts and shifting numbers incorporates the parameter $t$ and yields both entropy equalities.

For the polynomial bound in Theorem~\ref{thm:main-poly-bound}, we use the spin representation of Golyshev--Lunts--Orlov \cite{GLO} to realize the cohomological Fourier--Mukai action, up to scalar and conjugacy, as the total exterior power of an operator on a $2g$-dimensional space. A linear-algebraic lemma bounds its largest Jordan block by $g^2+1$. Since the numerical action is an invariant subquotient of the cohomological action, Theorem~\ref{thm:main-entropy} yields $h^{\pol}(\Phi)\leq g^2$. Sharpness follows from the identification of $\Plov(f)$ with $h^{\pol}((-\otimes L)\circ \bL f^*)$ and the extremal examples of Lin--Oguiso--Zhang on powers of elliptic curves.

Theorem~\ref{thm:main-universal-shifting} builds on Polishchuk's normalized comparison between the categorical shift extension and the degree--logarithm extension \cite{Polishchuk14}. We construct an equivariant embedding $D_A=\NS(A)_\bR+i\Amp(A)$
into the Siegel space of $W_A=H_1(A\times\widehat A,\bR)$, under which the degree automorphy factor agrees with the symplectic determinant factor. 
This identifies the degree extension with the pullback of the symplectic universal cover and yields the categorical lift. 
A uniform comparison between the resulting logarithmic height and the coherent-kernel shift shows that their homogenizations coincide, giving the desired formula for $\tau$.

Finally, the proof of Theorem~\ref{thm:main-shifting-spectrum} uses the compatibility of Orlov's cocycle with isogenies and products. Under Condition~$(\star)$, commensurability and product decompositions reduce finiteness to simple abelian varieties of Picard number one. For such a factor, a scalar congruence subgroup of $\SL_2(\bZ)$ has finite index in its integral isometry group, and the corresponding rotation numbers take only finitely many values.

When Condition~$(\star)$ fails, a polarized isogeny factor $B$ admits a nontrivial Rosati-fixed totally real field $L$.
The embedding $\SL_2(L)\to U(B)_\bQ$ expresses the reduced shifting number as an integral multiple of the sum of the rotation numbers over the real embeddings of $L$. 
After clearing denominators, we construct an integral element with exactly one elliptic embedding and all other embeddings hyperbolic. 
The elliptic rotation angle is an irrational multiple of $\pi$, while the hyperbolic factors contribute zero rotation. Orlov's surjectivity then produces an autoequivalence with transcendental shifting number.

%--------------------%--------------------%--------------------
\medskip
\noindent\textbf{Note on related work.}
This paper has been in preparation for nearly a year. After becoming aware of Yoshida's preprint \cite{Yoshida26}, submitted to arXiv on September 23, 2026, the author decided to make the present results publicly available without further delay. Yoshida proves the exponential entropy equality in Theorem~\ref{thm:main-entropy} by a different method, based on Bridgeland stability conditions, whereas the proof given here uses more classical tools involving Fourier--Mukai kernels and a uniform cohomology estimate (see Proposition~\ref{prop:unif-cohomo-estimate}).

%--------------------%--------------------%--------------------
\medskip
\noindent\textbf{Organization of the paper.}
Section~\ref{sec:prelim} collects the preliminaries.
Section~\ref{sec:entropy} proves the uniform comparison and entropy equalities, and Section~\ref{sec:poly-upper} establishes the sharp polynomial bound.
Section~\ref{sec:universal-shifting} identifies shifting
numbers with symplectic translation.
Section~\ref{sec:shifting-spectrum} proves the classification of reduced shifting spectra.

%--------------------%--------------------%--------------------
\medskip
\noindent\textbf{Conventions.}
All varieties are defined over $\bC$. All autoequivalences are exact and $\bC$-linear.
The dual of an abelian variety $A$ is denoted by $\widehat A$.
For a smooth projective variety $X$, let
$$
\chi(E,F)=\sum_{k\in\bZ}(-1)^k\dim_\bC\Hom(E,F[k]), \qquad
N(X)=K_0(X)/\operatorname{rad}\chi.
$$
The numerical Grothendieck group $N(X)$ has finite rank, and the induced pairing on $N(X)_\bR$ is nondegenerate. 
An autoequivalence $\Phi$ induces an invertible operator $N(\Phi)$ on $N(X)_\bR$. 
We fix a norm on $N(X)_\bR$ and use the associated operator norm; its choice affects the estimates below only by constant factors.

For nonnegative quantities $a,b$, we write $a\asymp_C b$ if they are bounded above and below by positive constant multiples of one another, with the allowed dependence indicated by the subscript $C$. 
For an invertible operator $T$ on a finite-dimensional real or complex vector space, write $\rho(T)$ for its spectral radius and $s(T)$ for one less than the largest size of a Jordan block whose eigenvalue has modulus $\rho(T)$. 
Then
\begin{equation}
\label{eq:linear-growth}
 \|T^n\|\asymp_T\rho(T)^n n^{s(T)} \qquad (n\geq1).
\end{equation}
We also write $d_{\max}(T)$ for the largest size of \emph{any} Jordan block, so that $s(T)\leq d_{\max}(T)-1$.

%--------------------%--------------------%--------------------
\medskip
\noindent\textbf{Acknowledgment.}
The author is partially supported by the National Natural Science Foundation of China (Grant No.~12401053).

%--------------------%--------------------%--------------------
\section{Preliminaries}
\label{sec:prelim}

%--------------------%--------------------%--------------------
\subsection{Polarizations, adjoints, and orthogonal decompositions}
In this subsection, we recall some standard notation and basic facts about abelian varieties, most of which can be found in standard references such as \cite{BL04}.

For abelian varieties $A,B$, put
$$
\Hom^0(A,B)=\Hom(A,B)\otimes_\bZ \bQ, \qquad \End^0(A)=\Hom^0(A,A).
$$
We write $\widehat{f}\colon \widehat{B}\to\widehat{A}$ for the dual of a homomorphism $f\colon A\to B$, and extend this notation to rational and real homomorphisms.
A homomorphism $q\colon A\to\widehat{A}$ is \emph{symmetric} if $\widehat{q}=q$ under the canonical isomorphism between $A$ and its double dual.
For a line bundle $L$ on $A$, let $\varphi_L(a)=t_a^*L\otimes L^{-1}$. The map $[L]\mapsto\varphi_L$ identifies $\NS(A)_\bQ$ with the space of symmetric rational homomorphisms from $A$ to $\widehat{A}$, which we denoted by $\Hom^0(A,\widehat{A})^+$.

The polarization $\lambda=\varphi_H$ of an ample line bundle $H$ determines the Rosati involution on $\End^0(A)$: $x^\dagger\coloneqq \lambda^{-1}\widehat{x}\lambda$. Denote
$$
\End^0(A)^\dagger\coloneqq\{x\in\End^0(A):x^\dagger=x\}.
$$
There are isomorphisms
$$
\NS(A)_\bQ\xrightarrow{\sim}\Hom^0(A,\widehat{A})^+ \xrightarrow{\sim} \End^0(A)^\dagger, \qquad
[L] \mapsto \varphi_L \mapsto \lambda^{-1}\varphi_L.
$$

Put $V_A=H_1(A,\bR)$, and let $J$ be its complex structure.
Under the natural identification $H_1(\widehat{A},\bR)\cong V_A^\vee$, the dual complex structure is $J_{\widehat{A}}\alpha=-\alpha\circ J$ \cite{GLO}*{\S1}.

Following the notations in \cite{BL04}*{\S2}, for a line bundle $L$, there is an associated alternating form $E_L\colon V_A\times V_A\to \bR$ and an associated Hermitian form $H_L\colon V_A\times V_A\to\bC$:
$$
H_L(v,w)=E_L(Jv,w)+i E_L(v,w), \qquad (\varphi_L)_*(v)=E_L(v,-).
$$
When $L$ is ample, the form $H_L$ is positive definite, and the Rosati involution is the adjoint operator for it:
$$
H_L(xv,w)=H_L(v,x^\dagger w).
$$
For a real symmetric homomorphism $q\colon A\to\widehat{A}$, set
$$
H_q(v,w)=H_L((\lambda^{-1}q)v,w)
$$
where $\lambda$ is the polarization assoicated with the ample line bundle $L$.
Write $p_A(q)$ for its positive index, counted in complex dimension. This depends only on $q$, not on the polarization.

\begin{lemma}
\label{lem:isogeny-inertia-q}
Let $\mu\colon B\to A$ be an isogeny, and let $q_B\in\Hom^0(B,\widehat{B})^+$.
Then $q_A=\widehat{\mu}^{-1}q_B\mu^{-1}$ is symmetric and has the same inertia as $q_B$.
\end{lemma}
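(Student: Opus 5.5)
Symmetry is formal. Dualizing $q_A=\widehat{\mu}^{-1}q_B\mu^{-1}$ reverses the order of composition and uses $\widehat{\widehat{\mu}}=\mu$ under the double-dual identification, together with $\widehat{f^{-1}}=\widehat{f}^{-1}$ for rational isogenies. This gives
$$
\widehat{q_A}=\widehat{\mu^{-1}}\,\widehat{q_B}\,\widehat{\widehat{\mu}^{-1}}=\widehat{\mu}^{-1}q_B\mu^{-1}=q_A .
$$
The identity extends to real homomorphisms by linearity.

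For the inertia, I will pass to the alternating and Hermitian forms on $V_B=H_1(B,\bR)$ and $V_A$. A symmetric $q$ determines the real bilinear form
$$
E_q(v,w)=\langle q_*v,w\rangle
$$
via the duality pairing $H_1(\widehat A,\bR)\times V_A\to\bR$. From the definition of $H_q$ one has
$$
H_q(v,w)=E_q(Jv,w)+iE_q(v,w).
$$
This holds because $H_L((\lambda^{-1}q)v,w)$ has imaginary part $E_L(\lambda^{-1}qv,w)=\langle qv,w\rangle$, using $(\varphi_L)_*v=E_L(v,-)$. The inertia of $H_q$ as a Hermitian form is therefore determined by the real symmetric form $(v,w)\mapsto E_q(Jv,w)$, and it does not depend on a polarization. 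This is the content of the remark that $p_A(q)$ is independent of the polarization, and I take it as given.

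Next I check the pullback identity
$$
E_{q_A}(\mu_*v,\mu_*w)=E_{q_B}(v,w).
$$
By definition of the dual map, $\langle\widehat{\mu}_*\xi,v\rangle=\langle\xi,\mu_*v\rangle$. Hence
$$
\langle (q_A)_*\mu_*v,\mu_*w\rangle=\langle \widehat{\mu}_*^{-1}(q_B)_*v,\mu_*w\rangle=\langle (q_B)_*v,w\rangle .
$$
Since $\mu$ is holomorphic, $\mu_*\colon V_B\to V_A$ is a $\bC$-linear isomorphism, so $\mu_*J_B=J_A\mu_*$. It follows that $H_{q_A}(\mu_*v,\mu_*w)=H_{q_B}(v,w)$.

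So $H_{q_A}$ and $H_{q_B}$ are congruent Hermitian forms under a complex-linear isomorphism. By Sylvester's law of inertia they have the same positive, negative and null indices.

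The only point needing care is sign conventions: $E_q$ must be defined compatibly with how $H_q$ was defined through $\lambda^{-1}q$, and duals of rational maps must be handled correctly. Both steps are routine once the identity $\operatorname{Im}H_q=E_q$ is recorded. If one prefers to avoid forms, the alternative is Rosati-adjoint language, using $\widehat{\mu}\lambda_A\mu$ as a polarization on $B$. But the form argument is cleaner, and it is the one I would use.
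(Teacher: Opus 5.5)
Your proposal is correct and takes the same route as the paper. Symmetry is checked formally, and equality of inertia follows from the congruence $H_{q_A}(\mu_* v,\mu_* w)=H_{q_B}(v,w)$ under the $\bC$-linear isomorphism $\mu_*$ together with Sylvester's law; your passage through $E_q=\operatorname{Im}H_q$ just spells out the verification the paper calls straightforward, and also shows that $H_q$ itself does not depend on the chosen polarization.
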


\begin{proof}
It is straightforward to check that $q_A$ is symmetric, and the associated Hermitian forms satisfy
$$
H_{q_B}(v,w)=H_{q_A}(\mu v,\mu w).
$$
The assertion follows because $\mu$ is an isomorphism on the tangent spaces.
\end{proof}

We will use the following polarization-orthogonal form of Poincar\'e's reducibiility theorem \cite{BL04}*{\S5.3}.
For an abelian subviarety $i\colon B\hookrightarrow A$, let $C$ be the identity component of $\ker(\widehat{i}\lambda_A)$.
Then the addition map $\mu\colon B\times C\to A$ is an isogeny and
$$
\widehat{\mu}\lambda_A\mu=\lambda_B\oplus\lambda_C,
$$
where $\lambda_B$ and $\lambda_C$ are restrictions of $\lambda_A$.
Iterating this construction gives an isogeny from a product of simple abelian varieties to $A$ under which $\lambda_A$ pulls back to the product of the induced polarizations.

\begin{lemma}
\label{lem:totally-real-L}
Every number field contained in $\End^0(A)^\dagger$ is totally real.
\end{lemma}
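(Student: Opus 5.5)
The plan is to reduce the statement to the positivity of the Rosati involution and a Hermitian-operator argument. Let $K\subseteq\End^0(A)^\dagger$ be a number field, regarded as a $\bQ$-subalgebra of $\End^0(A)$ contained in the Rosati-fixed part. Since the Rosati involution is $\bQ$-linear and $K$ is commutative, every element $x\in K$ satisfies $x^\dagger=x$. Our aim is to show that every embedding $\sigma\colon K\hookrightarrow\bC$ has real image. Equivalently, the minimal polynomial over $\bQ$ of each $x\in K$ should have only real roots. It in fact suffices to check this for a primitive element $x$, because then all conjugates of $x$ are real and every embedding of $K=\bQ(x)$ lands in $\bR$.

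The first step is to realize $x$ as a self-adjoint operator. The analytic representation identifies $x$ with a $J$-linear endomorphism $x_*$ of $V_A=H_1(A,\bR)$, regarded as a $g$-dimensional complex vector space. By the identity recalled above, $H_L(x_*v,w)=H_L(v,x^\dagger_* w)$, where $H_L$ is the positive definite Hermitian form of the chosen ample $L$. Since $x^\dagger=x$, the operator $x_*$ is Hermitian with respect to $H_L$. Its complex eigenvalues are therefore real, and it is diagonalizable.

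The second step relates these eigenvalues to the conjugates of $x$. The assignment $y\mapsto y_*$ is an injective $\bQ$-algebra homomorphism $\End^0(A)\to\End_\bC(V_A)$. Its restriction to the field $K$ makes $V_A$ a $K\otimes_\bQ\bC$-module. Because $K\otimes_\bQ\bC\cong\prod_\sigma\bC$ splits over the embeddings $\sigma$, the eigenvalues of $x_*$ are exactly the values $\sigma(x)$ for those $\sigma$ occurring in the decomposition. The minimal polynomial $m$ of $x$ over $\bQ$ annihilates $x_*$. By injectivity and irreducibility of $m$, the minimal polynomial of $x_*$ equals $m$. Hence every root of $m$, that is, every conjugate $\sigma(x)$, is an eigenvalue of $x_*$ and is therefore real.

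The main obstacle is the second step: we must ensure that all conjugates appear as eigenvalues of the complex representation, not merely some of them. The irreducibility argument handles this. The minimal polynomial of $x_*$ divides $m$ and is nonconstant, so it equals $m$, and every root of the minimal polynomial of an operator is an eigenvalue of that operator. As an alternative one can use the rational representation on $H_1(A,\bQ)$, which is a free $K$-module and on which the characteristic polynomial of $x$ is a power of $m$. Then $H_1(A,\bC)=V_A\oplus\overline{V_A}$, and complex conjugation preserves realness, which gives the same conclusion.
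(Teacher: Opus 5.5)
Your overall strategy (take a primitive element, use the faithful action, use self-adjointness for the polarization) is the same as the paper's. However, the justification you give for the key step fails as written.

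In your second step, $x_*$ is a $\bC$-linear operator on the $g$-dimensional complex space $(V_A,J)$, so its minimal polynomial $\mu$ lies in $\bC[t]$. Irreducibility of $m$ over $\bQ$ says nothing about divisors of $m$ in $\bC[t]$. Injectivity of $y\mapsto y_*$ would give $m\mid\mu$ only if $\mu$ had rational coefficients, so that $\mu(x)\in\End^0(A)$ makes sense. In general the analytic representation does not see all conjugates. For an elliptic curve with CM by $\bQ(i)$, the element $i$ acts on the one-dimensional $V_A$ by a single scalar, so its minimal polynomial there is $t\mp i$, not $t^2+1$. Hence the sentence ``the minimal polynomial of $x_*$ divides $m$ and is nonconstant, so it equals $m$'' is false as a general principle. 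For Rosati-fixed $x$ the conclusion does hold, but only because the Hermitian property already makes the eigenvalues real. That is the very point you are trying to prove, so it cannot be the justification unless you argue it explicitly.

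Your closing ``alternative'' is the correct fix, and it is essentially the paper's proof. On $H_1(A,\bQ)$ the minimal polynomial of $x$ is $m$, and it stays $m$ after extending scalars. So every root of $m$ is an eigenvalue of $x$ on $H_1(A,\bC)=V_A\oplus\overline{V_A}$. The eigenvalues on $V_A$ are real by the Hermitian property, and those on $\overline{V_A}$ are their complex conjugates, hence also real. The paper says this more directly: $x$ acts on $H_1(A,\bR)$ as a self-adjoint operator for the positive definite real form $\real H_L$, and the minimal polynomial of this real operator is $m$. You should promote the alternative to the main argument and drop the irreducibility claim for $x_*$ on $(V_A,J)$.
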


\begin{proof}
Let $L\subseteq\End^0(A)^\dagger$ be a number field.
Choose a primitive element $\alpha\in L$, and let $p_\alpha(t)\in\bQ[t]$ be its minimal polynomial.
Since the action of $L$ on $H_1(A,\bQ)$ is faithful, the minimal polynomial of $\alpha$ acting on $H_1(A,\bQ)$ is also $p_\alpha(t)$. Hence $p_\alpha(t)$ divides the characteristic polynomial of this action.

On the other hand, $\alpha\in\End^0(A)^\dagger$ implies that $\alpha$ acts on $H_1(A,\bQ)\otimes_\bQ\bR$ as a self-adjoint operator with respect to the polarization. 
Thus every root of $p_\alpha(t)$ is real.
Since $L=\bQ(\alpha)$, every embedding $L\hookrightarrow\bC$ has image in $\bR$, so $L$ is totally real.
\end{proof}

%--------------------%--------------------%--------------------
\subsection{Fourier--Mukai equivalences and Orlov's cocycle}
\label{subsec:2.2}
In this subsection, we recall Orlov's results on derived equivalences of abelian varieties \cite{Orlov}, together with Polishchuk's results showing that cocycles can be determined and extended from their restriction to big subsets \cite{Polishchuk14}.

For $F=\bQ$ or $\bR$, one can write an element of $\End(A\times\widehat{A})_F=\End(A\times\widehat{A})\otimes_\bZ F$ into a block matrix.
Following \cite{Orlov}, put
$$
u=
\begin{pmatrix}
    a & b \\ c & d
\end{pmatrix},
\qquad
\widetilde{u}=
\begin{pmatrix}
    \widehat{d} & -\widehat{b} \\ -\widehat{c} & \widehat{a}
\end{pmatrix},
$$
and define
$$
U(A)_F=\{u\in\End(A\times\widehat{A})_F: \widetilde{u}=u^{-1}\}, \qquad 
U(A)=U(A)_\bQ \cap \End(A\times\widehat{A}).
$$
Following \cite{Polishchuk14}, we also denote
$$
U(A)_F^0=\left\{
\begin{pmatrix}
    a & b \\ c & d
\end{pmatrix}
\in U(A)_F:
b \text{ is invertible}
\right\}.
$$

By \cite{Orlov}*{Proposition~3.2}, every equivalence between derived categories of abelian varieties is Fourier--Mukai, with its kernel of the form $K[s]$ for a nonzero coherent sheaf $K$ and an integer $s$.
Orlov construed an exact sequence \cite{Orlov}*{Theorem~4.14}
$$
1\to \bZ\oplus (A\times\widehat{A}) \to \Aut\Db(A) \xrightarrow{\gamma_A} U(A) \to 1.
$$
Then
$$
\widetilde{U}(A)\coloneqq\Aut\Db(A)/(A\times\widehat{A}).
$$
is a central extension of $U(A)$ by $\bZ$.

Let $\Phi\in\Aut\Db(A)$ be an autoequivalence with Fourier--Mukai kernel $K[s]\in\Db(A\times A)$. Write
$$
\sh(\Phi)\coloneqq s
$$
for the shift of the coherent Fourier--Mukai kernel.
This descends to $\widetilde{U}(A)$.
The lifts with zero shift define a section $\sigma_A\colon U(A)\to\widetilde{U}(A)$, and identify the extension with $U(A)\times\bZ$ with multiplication
$$
(U,m)(v,n)=(uv,m+n+c_A(u,v)).
$$
Here $c_A$ is the cocycle satisfies $c_A(u_\Phi,u_\Psi)=\sh(\Phi\Psi)-\sh(\Phi)-\sh(\Psi)$.

Orlov extends this cocycle to $U(A)_\bR$ \cite{Orlov}*{\S4}.
If $u_1,u_2\in U(A)^0_\bR$ and denote $b_{12}$ the upper-right block of $u_1u_2$, then
\begin{equation}
\label{eqn:Orlov-cocycle-formula}
c_A(u_1,u_2)=p_A(b_1^{-1}b_{12}b_2^{-1})-g.    
\end{equation}

We will repeatedly use Polishchuk's extension principle.
Following \cite{Polishchuk14}*{Definition~1.3.2}, a subset $B\subseteq G$ of a group is called \emph{big} if
$$
B^{-1}\cap B g_1
\cap B g_2\cap B g_3\neq\varnothing
\qquad\text{for all }g_1,g_2,g_3\in G.
$$

\begin{lemma}[{\cite{Polishchuk14}*{Lemma~1.3.3(ii)}}]
\label{lem:big-subset-extension}
Let $ B\subseteq G$ be big, and let $c,c'\colon G\times G\to Z$ be a pair of $2$-cocycles, where $Z$ is an abelian group with the trivial $G$-action.
Suppose
$$
c(b_1,b_2)=c'(b_1,b_2)
\qquad\text{for any }b_1,b_2\in B \text{ with } b_1b_2\in B.
$$
Then the associated central extensions $G_c$ and $G_{c'}$ admit a unique isomorphism
$$
i\colon G_c\xrightarrow{\sim}G_{c'}
$$
inducing the identity on $G$ and $Z$ and satisfying $i(b,0)=(b,0)$ for every $b\in B$.
\end{lemma}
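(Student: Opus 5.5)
The plan is to reduce the statement to a statement about the difference cocycle. Write $G_c=G\times Z$ with multiplication $(g,m)(h,n)=(gh,m+n+c(g,h))$, and similarly for $G_{c'}$. Any isomorphism $G_c\to G_{c'}$ inducing the identity on $G$ and on $Z$ must have the form
$$i(g,m)=(g,m+f(g))$$
for some function $f\colon G\to Z$. Multiplicativity of $i$ is then equivalent to
$$d(g,h)\coloneqq c'(g,h)-c(g,h)=f(g)+f(h)-f(gh).$$
The normalization $i(b,0)=(b,0)$ is equivalent to $f|_B=0$. The function $d$ is a $2$-cocycle with trivial action, and by hypothesis $d(b_1,b_2)=0$ whenever $b_1,b_2,b_1b_2\in B$. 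So the lemma amounts to two claims: $d$ is the coboundary of a function vanishing on $B$, and such a function is unique.

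\textbf{Uniqueness.} If $f_1,f_2$ both work, then $\varphi=f_1-f_2$ satisfies $\varphi(g)+\varphi(h)-\varphi(gh)=0$, so $\varphi$ is a homomorphism $G\to Z$, and it vanishes on $B$. For arbitrary $g\in G$, bigness with $g_1=g$ gives some $b\in B\cap Bg$, say $b=b'g$ with $b'\in B$. Then $\varphi(g)=\varphi(b)-\varphi(b')=0$.

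\textbf{Existence.} The coboundary identity forces the definition of $f$. If $b\in B$ and $bg\in B$, we need
$$0=f(bg)=f(b)+f(g)-d(b,g)=f(g)-d(b,g),$$
so we define $f(g)\coloneqq d(b,g)$ for any $b\in B$ with $bg\in B$. Such $b$ exist by bigness, since $B\cap Bg^{-1}\neq\varnothing$. Two things must then be checked.

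First, $f$ is well defined, i.e. $d(b,g)=d(b',g)$ whenever $b,b',bg,b'g\in B$. Second, $f$ vanishes on $B$. For $g\in B$, choose $b$ in $B\cap Bg^{-1}$; then $b,g,bg\in B$, so $f(g)=d(b,g)=0$ by hypothesis.

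For well-definedness, I would apply bigness with $g_1,g_2,g_3$ chosen among $b^{-1},b'^{-1},g$ and their products. This produces an auxiliary element $e\in B^{-1}$ such that the relevant products ($e^{-1}$, $eb$, $eb'$, and so on) all lie in $B$. I would then expand both $d(b,g)$ and $d(b',g)$ through the cocycle relation
$$d(x,y)+d(xy,z)=d(x,yz)+d(y,z),$$
routing each through $e$. Every term that appears is of the form $d(\beta_1,\beta_2)$ with $\beta_1,\beta_2,\beta_1\beta_2\in B$, hence vanishes, or else coincides between the two expansions.

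Finally, for arbitrary $g,h$, I would verify $d(g,h)=f(g)+f(h)-f(gh)$. Using bigness with $g_1,g_2,g_3$ built from $h^{-1}$, $(gh)^{-1}$ and $g^{-1}$, choose a single $x\in B$ such that $x$, $xg$, $xgh$ and suitable inverses all lie in $B$. Then $f(g)=d(x,g)$, $f(h)=d(xg,h)$ and $f(gh)=d(x,gh)$, and the cocycle identity applied to $(x,g,h)$ gives exactly the required relation.

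The main obstacle is the bookkeeping in these last two steps. One must match the precise shape of the bigness condition $B^{-1}\cap Bg_1\cap Bg_2\cap Bg_3\neq\varnothing$, which has three translates plus one inverse, to the simultaneous membership conditions needed. In particular, the $B^{-1}$ factor is what is needed to handle inverses when comparing two choices of $b$. I would first try the choice $g_1=g^{-1}$, $g_2=(gh)^{-1}$, $g_3=h^{-1}$ (and its analogue for well-definedness), and adjust the choice if a membership condition fails.
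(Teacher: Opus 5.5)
Your reduction is correct. Any isomorphism of the required kind has the form $i(g,m)=(g,m+f(g))$ with $f|_B=0$ and $c'-c$ a coboundary. With this normalization, multiplicativity actually reads $c'(g,h)-c(g,h)=f(gh)-f(g)-f(h)$, so your $f$ is the negative of the one in $i$; this is harmless. Your uniqueness argument is fine, and so is the vanishing of $f$ on $B$ once $f$ is well defined. The paper does not prove this lemma at all; it quotes Polishchuk. So the only question is whether your argument closes.

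\textbf{The gap.} It does not yet close at well-definedness of $f(g)=d(b,g)$. This is the one step where the exact shape of bigness matters, and you leave it as a plan. As sketched, you take a single auxiliary $e$ with $e^{-1},eb,eb'\in B$ and route both $d(b,g)$ and $d(b',g)$ through it. The cocycle identity on $(e^{-1},eb,g)$ and on $(e^{-1},eb',g)$ then only gives $d(b,g)=d(eb,g)$ and $d(b',g)=d(eb',g)$. That is the same problem for the pair $(eb,eb')$. Moreover, requiring $eb,ebg,eb',eb'g\in B$ asks for four translates of $B$ besides $B^{-1}$, one more than bigness provides.

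\textbf{The missing idea.} Make an asymmetric transfer: on the $b'$ side use the second element $e'=ebb'^{-1}$, so that $e'b'=eb$. Concretely, choose
\[
e\in B^{-1}\cap Bb^{-1}\cap B(bg)^{-1}\cap Bb'b^{-1},
\]
which is exactly bigness with $(g_1,g_2,g_3)=(b^{-1},(bg)^{-1},b'b^{-1})$.
\begin{itemize}
\item The cocycle identity on $(e^{-1},eb,g)$ gives $d(b,g)=d(eb,g)$. Both side terms vanish, since $e^{-1},eb,b\in B$ and $e^{-1},ebg,bg\in B$.
\item The identity on $(e',b',g)$ gives $d(eb,g)=d(b',g)$. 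Again both side terms vanish, since $e',b',e'b'=eb\in B$ and $e',b'g,e'b'g=ebg\in B$.
\end{itemize}
So $B^{-1}$ serves one side and the translate $Bb'b^{-1}$ serves the other. This asymmetry is what lets three translates suffice.

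\textbf{A smaller fix in the last step.} Your first choice $(g_1,g_2,g_3)=(g^{-1},(gh)^{-1},h^{-1})$ only yields $x\in B^{-1}$, not $x\in B$. Take $(1,g^{-1},(gh)^{-1})$ instead, which gives $x,xg,xgh\in B$. Your computation via the cocycle identity on $(x,g,h)$ then goes through as written. With these two repairs the proof is complete.
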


In particular, if $c$ and $c'$ agree on $B\times B$, then they agree on $G\times G$.
Indeed, bigness implies $G= B B$: for $g\in G$, choose $x\in B^{-1}\cap B g^{-1}$ and write $g=x^{-1}(xg)$.
For $g=b_1b_2$ with $b_1,b_2\in B$, the equality $c(b_1,b_2)=c'(b_1,b_2)$ therefore forces the isomorphism $i$ to satisfy $i(g,0)=(g,0)$.
Comparing the multiplication laws gives $c=c'$.

\begin{rmk}
We now record the big subsets to which we will apply Lemma~\ref{lem:big-subset-extension}.
\begin{enumerate}[label=(\roman*)]
    \item\label{item:big-cell-i} For $F=\bQ$ or $\bR$, the subset $U(A)_F^0$ is big in $U(A)_F$ by \cite{Polishchuk14}*{Lemma~1.3.4}.
    \item For $F=\bQ$ or $\bR$, the subset
$$
\left\{
\begin{pmatrix}a&b\\c&d\end{pmatrix}
\in\mathrm{SL}_2(F):b\neq0
\right\}
$$
    is big in $\SL_2(F)$.
    This is a special case of \ref{item:big-cell-i}, for $A$ an elliptic curve without complex multiplication.
    \item Bigness is preserved under finite products.
    \item For every finite-index subgroup $\Gamma\subseteq U(A)$, the subset $\Gamma\cap U(A)_\bQ^0$ is big in $\Gamma$ \cite{Polishchuk14}*{Remark~1.4.2}.
\end{enumerate}
\end{rmk}

%--------------------%--------------------%--------------------
\subsection{Categorical entropy function of an autoequivalence}
In this subsection, we recall the notions of categorical entropy and categorical polynomial entropy, following \cites{DHKK,FanFuOuchi}.

An object $G$ is a split generator if it generates the category under shifts, cones, and direct summmands.
If $P$ is very ample on a smooth projective variety $X$ of dimension $d$, then
$$
\bigoplus_{i=k-d}^k P^{\otimes i}
$$
is a split generator of $\Db(X)$ for every $k\in\bZ$ \cite{OrlovGenerator}*{Theorem~4}.

For objects $E,F$ and $t\in\bR$, the complexity $\delta_t(E,F)$ is the infimum of $\sum_j e^{n_jt}$ over towers of distinguished triangles with successive factors $E[n_j]$ and final object containing $F$ as a direct summand \cite{DHKK}*{Definition~2.1}.
Put
$$
\epsilon_t(E,F)=\sum_{k\in\bZ} \dim\Hom(E,F[k])e^{-kt}.
$$
For a split generator $G\in\Db(X)$, 
\begin{equation}
\label{eqn:complexity-Ext}
\delta_t(G,E)\asymp_{G,t}\epsilon_t(G,E)
\end{equation}
uniformly in $E$; see \cite{DHKK}*{Section~2.3}.

\begin{lemma}
\label{lemma:2-generator-estimate}
Fix split generators $G,G'$ of $\Db(X)$ and $t\in\bR$. Then
$$
\epsilon_t(G,E)\asymp_{G,G',t} \epsilon_t(G',E), \qquad
\epsilon_t(G,\Phi G) \asymp_{G,G',t} \epsilon_t(G',\Phi G')
$$
for every object $E$ and every autoequivalence $\Phi$, respectively.
The same holds for $\delta_t$.
\end{lemma}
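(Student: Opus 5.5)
The plan is to derive both comparisons from the fact that a split generator builds every other split generator in finitely many steps, and that $\epsilon_t$ is subadditive along distinguished triangles. The first comparison is the key one; the second follows from it together with its variant in the first variable.

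\textbf{Step 1: comparison in the second variable.} I will establish the bound in the form
$$
\epsilon_t(G,E)\le C\,\epsilon_t(G',E),
$$
where $C$ depends only on $G,G',t$. Since $G'$ is a split generator, $G$ is a direct summand of an object obtained from finitely many shifts $G'[n_j]$ by iterated cones.

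Apply $\Hom(-,E[k])$ to each distinguished triangle in this tower. The long exact sequence bounds $\dim\Hom(\text{cone},E[k])$ by the sum of the corresponding dimensions for the two outer terms. Passing to a direct summand only decreases dimensions.

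Shifting the first argument by $n$ multiplies $\epsilon_t$ by $e^{\pm nt}$. Indeed, $\Hom(G'[n],E[k])=\Hom(G',E[k-n])$, which reindexes the sum and gives
$$
\epsilon_t(G'[n],E)=e^{-nt}\epsilon_t(G',E).
$$
Summing over the tower yields
$$
\epsilon_t(G,E)\le\Big(\sum_j e^{-n_jt}\Big)\epsilon_t(G',E).
$$
Exchanging the roles of $G$ and $G'$ gives the reverse inequality, so the comparison holds uniformly in $E$. For $\delta_t$ the same statement follows from \eqref{eqn:complexity-Ext}, or directly from the triangle inequality $\delta_t(G,E)\le\delta_t(G,G')\,\delta_t(G',E)$.

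\textbf{Step 2: comparison in the first variable.} I will prove the analogous estimate
$$
\epsilon_t(F,\Phi G)\asymp\epsilon_t(F',\Phi G),
$$
uniformly in $\Phi$. Since $\Phi$ is an exact autoequivalence, it carries the tower expressing $G$ in terms of $G'$ to a tower expressing $\Phi G$ in terms of the shifts $\Phi G'[n_j]$, with the same shifts $n_j$.

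Applying the covariant functor $\Hom(F,-[k])$ gives the same subadditivity. Since $\epsilon_t(F,X[n])=e^{nt}\epsilon_t(F,X)$, we obtain
$$
\epsilon_t(F,\Phi G)\le\Big(\sum_j e^{n_jt}\Big)\epsilon_t(F,\Phi G')
$$
for any object $F$. The constant is independent of $\Phi$, which is the essential point.

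\textbf{Step 3: combining.} Chain the two estimates:
$$
\epsilon_t(G,\Phi G)\asymp\epsilon_t(G,\Phi G')\asymp\epsilon_t(G',\Phi G'),
$$
using Step 2 for the first comparison and Step 1 with $E=\Phi G'$ for the second. The same chaining works for $\delta_t$.

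The only delicate point is ensuring that the constants are independent of $\Phi$ and $E$. This holds because the towers used are fixed once $G$ and $G'$ are chosen, and $\Phi$ transports them without changing the shifts. Beyond this I expect no real obstacle; the bookkeeping of the signs of the shifts $e^{\pm n_jt}$ is the only thing to check carefully.
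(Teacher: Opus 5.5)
Your proposal is correct. It runs on the same mechanism as the paper (a fixed tower building one generator from shifts of the other, plus a triangle inequality), but it applies that inequality at a different level.

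The paper never works with $\epsilon_t$ directly. It invokes the submultiplicativity of complexity from \cite{DHKK}*{Proposition~2.2}:
\[
\delta_t(G,E)\le\delta_t(G,G')\,\delta_t(G',E),
\qquad
\delta_t(G,\Phi G)\le\delta_t(G,G')\,\delta_t(G',\Phi G')\,\delta_t(G',G).
\]
The last factor is really $\delta_t(\Phi G',\Phi G)$, and replacing it by $\delta_t(G',G)$ is exactly your observation that $\Phi$ transports towers without changing the shifts. The paper then moves these $\delta_t$-comparisons to $\epsilon_t$ via \eqref{eqn:complexity-Ext}.

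You instead prove the $\epsilon_t$-version of the triangle inequality yourself, separately in each variable, using the long exact Hom sequences. This is why your constants $\sum_j e^{-n_jt}$ (contravariant slot) and $\sum_j e^{n_jt}$ (covariant slot) both come from one fixed tower. Your sign bookkeeping and the direction of the exact sequences both check out.

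The paper's route is shorter given the cited results and treats $\delta_t$ and $\epsilon_t$ in one stroke. Yours is self-contained for the $\epsilon_t$ comparisons and needs \eqref{eqn:complexity-Ext} or \cite{DHKK}*{Proposition~2.2} only for the $\delta_t$ statement.

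Your phrase ``the same chaining works for $\delta_t$'' is slightly loose. Transporting your Step 2 to $\delta_t$ needs the invariance $\delta_t(\Phi G',\Phi G)=\delta_t(G',G)$. Alternatively, you can bypass it with
\[
\delta_t(G,\Phi G)\asymp\epsilon_t(G,\Phi G)\asymp\epsilon_t(G',\Phi G')\asymp\delta_t(G',\Phi G'),
\]
which uses \eqref{eqn:complexity-Ext} with the split generators in the first slot. Either way the claim holds.
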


\begin{proof}
By \cite{DHKK}*{Proposition~2.2}, we have
$$
\delta_t(G,E)\leq\delta_t(G,G')\delta_t(G',E), \qquad
\delta_t(G,\Phi G)\leq\delta_t(G,G')\delta_t(G',\Phi G')\delta_t(G',G).
$$
Interchanging $G$ and $G'$ gives the reverse inequalities.
Applying (\ref{eqn:complexity-Ext}) then proves the corresponding statements for $\epsilon_t$.
\end{proof}

Categorical entropy and categorical polynomial entropy are defined by \cites{DHKK,FanFuOuchi}
$$
h_t(\Phi)\coloneqq \lim_{n\to\infty}\frac{\log\delta_t(G,\Phi^nG)}{n},
\qquad
h_t^\pol(\Phi)\coloneqq \limsup_{n\to\infty} \frac{\log\delta_t(G,\Phi^nG)-nh_t(\Phi)}{\log n}.
$$
The first limit exists, and both invariants are independent of $G$.
They can be equally computed using $\epsilon_t(G,\Phi^nG')$ for any two fixed split generators.

The following standard estimate underlies the numerical lower bounds in
\cite{KST20}*{Theorem~2.13} and \cite{FanFuOuchi}*{Proposition~4.3}.

\begin{lemma}
\label{lemma:2-entropy-lower-bound}
For every split generator $G\in\Db(X)$, there is $C_G>0$ such that
$$
\| N(\Phi) \| \leq C_G \epsilon_0(G,\Phi G)
$$
for every autoequivalence $\Phi$.
\end{lemma}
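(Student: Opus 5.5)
The plan is to bound each matrix entry of $N(\Phi)$ in a fixed basis of $N(X)_\bR$ by a constant times $\epsilon_0(G,\Phi G)$. I would work with the Euler pairing $\chi$, which is nondegenerate on $N(X)_\bR$. Fix classes $[E_1],\dots,[E_r]$ forming a basis of $N(X)_\bR$, and classes $[F_1],\dots,[F_r]$ such that the matrix $(\chi(F_i,E_j))$ is invertible. Both can be realized by objects of $\Db(X)$.

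Since $\Phi$ is an autoequivalence, the matrix of $N(\Phi)$ in the basis $[E_j]$ is recovered linearly from the numbers $\chi(F_i,\Phi E_j)$. Writing $[\Phi E_j]=\sum_k m_{kj}[E_k]$, we get $\chi(F_i,\Phi E_j)=\sum_k \chi(F_i,E_k)m_{kj}$. Inverting the fixed matrix $(\chi(F_i,E_k))$ gives
$$
\|N(\Phi)\|\le C\max_{i,j}|\chi(F_i,\Phi E_j)|
$$
with $C$ independent of $\Phi$.

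Next, bound each Euler characteristic by total dimensions:
$$
|\chi(F_i,\Phi E_j)|\le \epsilon_0(F_i,\Phi E_j).
$$
It then remains to compare $\epsilon_0(F_i,\Phi E_j)$ with $\epsilon_0(G,\Phi G)$ uniformly in $\Phi$. This is the step needing care, and I would handle it with the complexity $\delta_0$ rather than $\epsilon_0$. Since $G$ is a split generator, $F_i$ and $E_j$ lie in its thick closure. By the triangle inequality for Hom dimensions along towers (the argument behind \cite{DHKK}*{Proposition~2.2} and \eqref{eqn:complexity-Ext}), one has
$$
\epsilon_0(F_i,\Phi E_j)\le \delta_0(G,F_i)\,\epsilon_0(G,\Phi E_j).
$$
This follows by building $F_i$ from copies of $G[n]$ and using subadditivity of $\dim\Hom(-,\Phi E_j[k])$ on triangles; direct summands only decrease dimensions.

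Similarly, $\Phi E_j$ is a summand of an object built from $\Phi G[n]$, with the same tower pattern as $E_j$ from $G$. Hence
$$
\epsilon_0(G,\Phi E_j)\le \delta_0(G,E_j)\,\epsilon_0(G,\Phi G),
$$
using the fact that $\Phi$ preserves triangles, shifts and summands. The constants $\delta_0(G,F_i)$ and $\delta_0(G,E_j)$ depend only on $G$ and the fixed objects, not on $\Phi$.

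Combining the three displayed bounds gives $\|N(\Phi)\|\le C_G\,\epsilon_0(G,\Phi G)$. The main subtlety is the direction of the Hom estimate in each variable: covariant subadditivity in the second slot, contravariant in the first. The weight $e^{-kt}$ at $t=0$ makes all shifts cost $1$, which is what lets the constants stay independent of $\Phi$.
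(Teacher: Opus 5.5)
Your proof is correct and follows the paper's route: nondegeneracy of $\chi$ bounds $\|N(\Phi)\|$ by finitely many $|\chi(F_i,\Phi E_j)|\le\epsilon_0(F_i,\Phi E_j)$, and generation by $G$ transfers this to $\epsilon_0(G,\Phi G)$ with constants independent of $\Phi$. The only difference is in that transfer step. The paper absorbs the test objects into the enlarged generator $G\oplus\bigoplus_i E_i$ and invokes Lemma~\ref{lemma:2-generator-estimate}, which passes through the two-sided comparison \eqref{eqn:complexity-Ext}. You instead prove the two needed one-sided inequalities directly by subadditivity of Hom dimensions along towers, so you use only the easy half of that comparison.
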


\begin{proof}
Choose objects $E_i$ whose numerical classes $e_i$ form a basis of $N(X)_\bR$.
By nondegeneracy of the Euler pairing, the linear map $T\mapsto(\chi(e_i,Te_j))_{i,j}$ on $\End(N(X)_\bR)$ is injective.
Hence, by equivalence of norms on a finite-dimensional vector space, there exists $C>0$ such that
$$
\| N(\Phi) \| \leq C \max_{i,j} |\chi(E_i,\Phi E_j)|\leq 
C\epsilon_0(G_1,\Phi G_1),
\qquad
G_1\coloneqq G\oplus\bigoplus_i E_i.
$$
Apply Lemma~\ref{lemma:2-generator-estimate} proves the claim.
\end{proof}

%--------------------%--------------------%--------------------
\subsection{Kernel shifts and shifting numbers}
In this subsection, we recall the shifting numbers for abelian varieties \cites{FanFilip,FanShifting} and identify them with the homogenized kernel shifts.

A function $q$ on a group is a quasimorphism of defect at most $D$ if $|q(xy)-q(x)-q(y)|\leq D$.
Its homogenization
$$
q^h(x)\coloneqq \lim_{n\to\infty} \frac{q(x^n)}{n}
$$
exists and satisfies $|q^h-q|\leq D$.

For nonzero objects $E,F$, let $\epsilon^+(E,F)$ and $\epsilon^-(E,F)$ be the maximum and minimum of the integers $k$ for which $\Hom(E,F[-k])\neq0$, whenever this set is nonempty. The \emph{shifting numbers} of an autoequivalence $\Phi$ are
$$
\tau^\pm(\Phi)=\lim_{n\to\infty}\frac{\epsilon^\pm(G,\Phi^n G)}{n};
$$
see \cite{FanFilip}.
They give the asymptotic linear growth of the entropy function $h_t(\Phi)$ as $t\to\pm\infty$; see \cite{ElaginLunts}*{Proposition~6.13} and \cite{FanFilip}*{Theorem~2.1.7}. For abelian varieties, we have $\tau^+(\Phi)=\tau^-(\Phi)$ \cite{FanShifting}*{Theorem~3.1}.
We recall a kernel estimate that proves this and will also be used below.

\begin{lemma}
\label{lemma-2-sh-tau-defect}
The function
$$
\sh\colon \Aut\Db(A) \to \bZ
$$
is a quasimorphism of defect at most $2g$.
Its homogenization is $\tau=\tau^+=\tau^-$, and
$$
|\sh(\Phi)-\tau(\Phi)|\leq 2g.
$$
\end{lemma}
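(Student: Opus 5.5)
I will derive both claims from the cocycle formula \eqref{eqn:Orlov-cocycle-formula} together with a direct estimate of $\epsilon^\pm(G,\Phi^nG)$ in terms of the coherent kernel of $\Phi^n$.

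\textbf{Quasimorphism.} First, $\sh$ descends to $\widetilde U(A)=U(A)\times\bZ$ with the cocycle multiplication. There $\sh(u,m)=m$, so
$$
\sh(\Phi\Psi)-\sh(\Phi)-\sh(\Psi)=c_A(u_\Phi,u_\Psi).
$$
It therefore suffices to show $|c_A|\le 2g$ on $U(A)$. On pairs in the big cell with product in the big cell, \eqref{eqn:Orlov-cocycle-formula} gives $c_A=p_A(\cdot)-g$. The positive index $p_A$ of a nondegenerate Hermitian form on a $g$-dimensional space lies in $[0,g]$, so $c_A\in[-g,g]$ there. To reach all pairs I will use Orlov's extension of $c_A$ to $U(A)_\bR$ (the Maslov-type index of triples of Lagrangians), which is bounded by $g$ in absolute value everywhere. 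If I only trust the big-cell formula, I will use Lemma~\ref{lem:big-subset-extension}: write $u=b_1b_2$ with $b_i$ in the big cell. The cocycle identity then expresses $c(u,v)$ as a combination of a bounded number of big-cell values, giving some defect bound $D$.

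The precise constant $2g$ is the delicate point. My plan is to avoid it by a direct argument. Let $\Phi$ have coherent kernel $K[s]$, so $\Phi(E)=Rp_{2*}(p_1^*E\otimes K)[s]$. For a fixed ample generator $G$ (a sum of sufficiently positive line bundles), $\Phi(G)$ is a complex with cohomology sheaves in degrees $[-s,-s+g]$, since $Rp_{2*}$ of a sheaf has relative cohomological amplitude $\le g$. Hence $\Hom(G,\Phi G[k])$ can be nonzero only for $k\in[-s,-s+2g]$, using $\Ext^j$ between sheaves for $0\le j\le g$. This gives
$$
-s\le \epsilon^-\le\epsilon^+\le -s+2g
$$
(with sign conventions to be matched to the paper's definition of $\epsilon^\pm$).

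\textbf{Homogenization.} Apply the bound to $\Phi^n$ to get $|\epsilon^\pm(G,\Phi^nG)+\sh(\Phi^n)|\le 2g$, uniformly in $n$. Dividing by $n$ yields $\tau^+=\tau^-=\lim_n \sh(\Phi^n)/n$ up to the global sign convention. Nonemptiness of the Hom set holds because $G$ is a generator and $\Phi G\neq0$.

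This window argument also gives the defect directly. Since $\epsilon^\pm$ are subadditive or superadditive up to bounded error under composition (via $\Hom(G,\Phi\Psi G)$ and the generation by $G$), combining this with the window shows $\sh$ is a quasimorphism. Finally, $|\sh-\tau|\le 2g$ follows from the standard fact $|q-q^h|\le D$.

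The main obstacle is getting exactly $2g$ rather than some larger constant. I would try Orlov's bound $|c_A|\le g$ on $U(A)_\bR$ via the Maslov index, which gives defect $g\le 2g$. I would then check that the error in $|q-q^h|$ is not doubled beyond $2g$, for instance by using the window bound directly: $\tau(\Phi)\in[-\sh(\Phi)\dots]$ follows from $\sh(\Phi^n)\in n\,\sh(\Phi)+[-(n-1)g,(n-1)g]$.
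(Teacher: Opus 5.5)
Your window argument for $\Hom(G,\Phi^nG[k])$, which gives $\tau^+=\tau^-=\lim_n\sh(\Phi^n)/n$, is correct and is essentially what the paper does. The gap is in the first claim: you never prove that the defect of $\sh$ is at most $2g$, and so you do not get $|\sh(\Phi)-\tau(\Phi)|\le 2g$. You flag this yourself.

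Formula (\ref{eqn:Orlov-cocycle-formula}) controls $c_A(u_1,u_2)$ only when both $u_1$ and $u_2$ lie in the big cell. Your global bound $|c_A|\le g$ ``via the Maslov index'' is asserted, not proved. The symplectic description available from Orlov covers only powers of an elliptic curve without complex multiplication. For general $A$, the symplectic description of the extension is Theorem~\ref{thm:main-universal-shifting}, whose proof uses the present lemma, so invoking it would be circular.

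Your fallback routes give weaker conclusions:
\begin{itemize}
\item Propagating the big-cell bound through Lemma~\ref{lem:big-subset-extension} writes $c_A(u,v)$ as a signed sum of several big-cell values. That yields only some unspecified multiple of $g$.
\item The approximate sub/superadditivity of $\epsilon^\pm$ under composition is itself unproved here, and it gives a constant depending on $G$.
\end{itemize}
As written, you obtain a quasimorphism of some defect $D$ with $|\sh-\tau|\le D$, not the stated $2g$.

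The missing idea is to bound the defect directly on Fourier--Mukai kernels. This works uniformly for all pairs and needs no cocycle formula.
\begin{itemize}
\item Write the kernels as $K_\Phi[s_\Phi]$ and $K_\Psi[s_\Psi]$ with $K_\Phi,K_\Psi$ coherent.
\item The convolution of the unshifted kernels, $\mathcal{C}=Rp_{13*}(p_{12}^*K_\Psi\otimes^{\bL}p_{23}^*K_\Phi)$, is isomorphic to $K_{\Phi\Psi}[c]$ with $c=\sh(\Phi\Psi)-\sh(\Phi)-\sh(\Psi)$. So its only nonzero cohomology sheaf sits in degree $-c$.
\item The derived tensor product has cohomology in degrees $[-2g,0]$, and $Rp_{13*}$ along $g$-dimensional fibres raises degrees by at most $g$. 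Hence $\mathcal{C}$ has cohomology in $[-2g,g]$, which forces $-g\le c\le 2g$.
\end{itemize}
Viewing the tensor product as a derived restriction to a codimension-$g$ partial diagonal in $A^4$ even gives $|c|\le g$, which is the bound you hoped to get from a Maslov index.

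Combine this with your identification of $\tau$ as the homogenization of $\sh$. The standard estimate $|q-q^h|\le D$ then yields $|\sh(\Phi)-\tau(\Phi)|\le 2g$.
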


\begin{proof}
Write the kernels of $\Phi$ and $\Psi$ as $K_\Phi[s_\Phi]$ and $K_\Psi[s_\Psi]$, respectively, with $K_\Phi,K_\Psi$ coherent. Let $p_{ij}\colon A^3\to A^2$ be the projections. The convolution of the unshifted kernels is
$$
\mathcal C
=
Rp_{13*}\bigl(
p_{12}^*K_\Psi\otimes^{\mathbf L}p_{23}^*K_\Phi
\bigr),
$$
which has cohomology in degrees $[-2g,g]$.
On the other hand, we have
$$
\mathcal C\cong K_{\Phi\Psi}[c],
\qquad
c=s_{\Phi\Psi}-s_\Phi-s_\Psi,
$$
where $K_{\Phi\Psi}$ is a nonzero coherent sheaf.
Its unique nonzero cohomology sheaf lies in degree $-c$. Thus
$$
-g\leq
\sh(\Phi\Psi)-\sh(\Phi)-\sh(\Psi)
\leq 2g,
$$
proving that $\sh$ has defect at most $2g$.

We now identify its homogenization. Take $G=\bigoplus_{i=0}^g P^{\otimes i}$ with $P$ very ample. If the kernel of $\Phi$ is $K[s]$, then for any two line bundles $L_1,L_2$,
$$
\Hom(L_1,\Phi L_2[-k])
\cong
H^{s-k}\bigl(
A\times A,K\otimes p_1^*L_2\otimes p_2^*L_1^\vee
\bigr).
$$
The right-hand side can be nonzero only when $0\leq s-k\leq 2g$.
Consequently, every nonzero term in $\epsilon_t(G,\Phi G)$
satisfies
\begin{equation}
\label{eqn:kernel-degree-window}
\sh(\Phi)-2g \leq k\leq \sh(\Phi).
\end{equation}
Applying this to $\Phi^n$ gives
$$
\frac{\sh(\Phi^n)}{n}-\frac{2g}{n}
\leq
\frac{\epsilon^\pm(G,\Phi^nG)}{n}
\leq
\frac{\sh(\Phi^n)}{n}.
$$
Letting $n\to\infty$, we obtain
$$
\tau^+(\Phi)=\tau^-(\Phi)
=\lim_{n\to\infty}\frac{\sh(\Phi^n)}{n}.
$$
Thus their common value $\tau(\Phi)$ is the homogenization of
$\sh$. The standard estimate for homogenization
therefore gives
$$
|\sh(\Phi)-\tau(\Phi)|\leq 2g,
$$
as required.
\end{proof}

The shifting number modulo $\bZ$ depends only on the Orlov element, which we called the \emph{reduced shifting number}
$$
\overline\tau_A\colon U(A)\longrightarrow\bR/\bZ,
\qquad
\overline\tau_A(u)=\tau(\Phi)+\bZ,
$$
where $\Phi$ is any autoequivalence with $u_\Phi=u$. 
For later use, we extend this definition to $U(A)_F$ for $F=\bQ,\bR$. 
Let $\tau_A$ denote the homogenization of the central coordinate $(u,m)\mapsto m$ on $\widetilde U(A)_F$, and set
$$
\overline\tau_A(u)=\tau_A(u,0)+\bZ.
$$
This agrees with the preceding definition on $U(A)$, since
$\tau_A(u_\Phi,\sh(\Phi))=\tau(\Phi)$.

%--------------------%--------------------%--------------------
\subsection{Poincar\'e translation and rotation numbers}
Consider the covering map
$$
\bR\longrightarrow\bR\bP^1,
\qquad
r\mapsto[\cos(\pi r):\sin(\pi r)].
$$
Let $\operatorname{Homeo}_{\bZ}^{+}(\bR)$ denote the group of increasing homeomorphisms $\widetilde h\colon\bR\to\bR$ satisfying $\widetilde h(r+1)=\widetilde h(r)+1$.
The \emph{Poincar\'e translation number} is
$$
\widetilde{\rot}(\widetilde h)
\coloneqq
\lim_{n\to\infty}
\frac{\widetilde h^{n}(r)-r}{n},
$$
which is independent of $r$.
For $h\in\SL_2(\bR)$, its \emph{rotation number} is
$$
\rot(h)
\coloneqq
\widetilde{\rot}(\widetilde h)+\bZ\in\bR/\bZ,
$$
where $\widetilde h$ is any lift of its action on $\bR\bP^1$.
This is independent of the chosen lift.

Let $\kappa\colon\SL_2(\bR)\times\SL_2(\bR)\to\bZ$ be the bounded $2$-cocycle characterized by
\begin{equation}
\label{eq:rank-one-cocycle}
\kappa(h_1,h_2)
=
\mathbf{1}_{\{b_{12}/(b_1b_2)>0\}}-1
\qquad (b_1b_2\neq0),
\end{equation}
where
$
h_i=
\begin{pmatrix}
a_i&b_i\\
c_i&d_i
\end{pmatrix}
$
and $b_{12}$ is the upper-right entry of $h_1h_2$.
Let
\[
G_\kappa\coloneqq\SL_2(\bR)\times_\kappa\bZ
\]
be the central extension with multiplication
\[
(h,m)(k,n)=(hk,m+n+\kappa(h,k)).
\]
The central-coordinate function $\ell_\kappa(h,m)=m$ is a
quasimorphism. Denote its homogenization by
\[
\tau_\kappa(\xi)
\coloneqq
\lim_{n\to\infty}\frac{\ell_\kappa(\xi^n)}{n},
\qquad \xi\in G_\kappa.
\]
Since $\tau_\kappa(h,m)=\tau_\kappa(h,0)+m$, its reduction
modulo $\bZ$ depends only on $h$.

\begin{lemma}
\label{lem:SL2-coycle-rotation}
The homogenized central coordinate for $\kappa$, reduced
modulo $\bZ$, is the projective rotation number:
\[
\tau_\kappa(h,0)+\bZ=\rot(h).
\]
It vanishes when $h$ has a real invariant line.
For an elliptic element
$h=\begin{pmatrix}a&b\\c&d\end{pmatrix}$
with eigenvalues $e^{\pm i\theta}$, where $0<\theta<\pi$,
its value is $-\theta/\pi$ if $b>0$, and $\theta/\pi$
if $b<0$, modulo $\bZ$.
In particular,
\[
\rot(\SL_2(\bZ))
=
\left\{0,\frac12,\frac13,\frac23\right\}.
\]
\end{lemma}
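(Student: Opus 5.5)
The plan is to realize $G_\kappa$ as the pullback of the universal cover $\widetilde{\SL}_2(\bR)\to\SL_2(\bR)$. The homogenized central coordinate is then a lift of the translation number, and its reduction modulo $\bZ$ is $\rot$.

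\textbf{Step 1: identify the extension.} Let $\widetilde{\SL}_2(\bR)$ act on $\bR$ by lifting the projective action on $\bR\bP^1$ through $r\mapsto[\cos\pi r:\sin\pi r]$. Its image is a subgroup of $\operatorname{Homeo}^+_\bZ(\bR)$, and the generator $z$ of the center acts as $r\mapsto r+1$. For $h$ with $b\neq0$, the point $[1:0]$ (that is, $r=0$) is not sent to itself, so $h\cdot 0\in(0,1)$ or $(-1,0)$ according to the sign of $c$. I will choose the section $s(h)$ to be the unique lift $\widetilde h$ with $\widetilde h(0)\in(-1,0]$, or a similarly normalized window. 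I then compute the cocycle $s(h_1)s(h_2)s(h_1h_2)^{-1}\in\bZ$ on the big set $b_1,b_2,b_{12}\neq0$. This comparison is the main obstacle. One must track where $0$ goes under the two lifts and check that the carry is $-1$ exactly when $b_{12}/(b_1b_2)<0$. This is a sign chase using $\cot$ of the image angles: the image of $[0:1]$ under $h^{-1}$, or equivalently of $[1:0]$ under $h$, has cotangent $a/c$, and the upper-right entry governs the relevant crossing. If my normalization gives the opposite sign, I will replace $s$ by the lift relative to $[0:1]$ instead. Once the cocycles agree on the big set of Remark (ii), Lemma~\ref{lem:big-subset-extension} gives an isomorphism $G_\kappa\cong\widetilde{\SL}_2(\bR)$ over $\SL_2(\bR)$ that is the identity on $\bZ$.

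\textbf{Step 2: compare the quasimorphisms.} Under this isomorphism, $\ell_\kappa(\xi)$ and $\widetilde h(0)$ differ by a bounded amount, since they agree up to $O(1)$ on the section and both shift by $1$ under $z$. Taking homogenizations gives $\tau_\kappa(h,0)=\widetilde{\rot}(\widetilde h)$ for a lift $\widetilde h$ of $h$, and reducing modulo $\bZ$ gives $\rot(h)$.

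\textbf{Step 3: compute the values.} If $h$ has a real invariant line $\ell$, then some lift fixes a point of $\bR$, so $\rot(h)=0$. For elliptic $h$ with eigenvalues $e^{\pm i\theta}$, $h$ is conjugate to a rotation by $\pm\theta$, and the projective action rotates lines by angle $\pm\theta$. In the coordinate $r$ this is a translation by $\pm\theta/\pi$, and $\rot$ is a conjugacy invariant. The sign is detected by $b$: for a standard rotation with $a=d=\cos\theta$ and $b=-\sin\theta$, we have $b<0$. The vector $(1,0)$ goes to $(\cos\theta,\sin\theta)$, so $r$ increases by $\theta/\pi$. Conjugation by elements of $\SL_2(\bR)$ preserves the sign of $b$ for elliptic elements, because the sign of $b$ equals the orientation of the rotation. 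This gives $\theta/\pi$ for $b<0$ and $-\theta/\pi$ for $b>0$.

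\textbf{Step 4: the values on $\SL_2(\bZ)$.} An elliptic element of $\SL_2(\bZ)$ has trace $2\cos\theta\in\{0,\pm1\}$, so $\theta\in\{\pi/2,\pi/3,2\pi/3\}$. The element $-I$ gives $\rot=0$, and parabolic and hyperbolic elements also give $0$. Taking both signs of $b$, the possible values of $\pm\theta/\pi$ modulo $\bZ$ are $\{1/2,1/3,2/3\}$, all of which are realized, for example by $\begin{pmatrix}0&-1\\1&0\end{pmatrix}$, $\begin{pmatrix}0&-1\\1&1\end{pmatrix}$ and their inverses. Together with $0$, this gives the stated set.
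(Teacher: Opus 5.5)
Your overall route is the paper's: identify $\kappa$ with the Euler cocycle of a normalized section of lifts to the covering line, homogenize, and then compute by conjugating to rotations and using traces in $\SL_2(\bZ)$. Your Steps 3 and 4 agree with the paper. But Step 1, which carries the content of the lemma, has two genuine problems.

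\textbf{The extension is misidentified.} In $\widetilde{\SL}_2(\bR)$, the central element acting by $r\mapsto r+1$ is the lift of the half-turn path $R_\varphi$, $0\leq\varphi\leq\pi$, so it lies over $-I$, not over $I$. The kernel of $\widetilde{\SL}_2(\bR)\to\SL_2(\bR)$ is generated by its square, which acts by $r\mapsto r+2$. Hence your claimed isomorphism $G_\kappa\cong\widetilde{\SL}_2(\bR)$ over $\SL_2(\bR)$, identity on $\bZ$, does not exist; the extension class of $\kappa$ is twice that of the universal cover. In any case such an isomorphism could not send $(I,1)$ to your $z$. What your section $s\colon\SL_2(\bR)\to\operatorname{Homeo}^+_\bZ(\bR)$ really produces is the group of pairs $(h,\widetilde h)$, with $\widetilde h$ a lift of the projective action of $h$. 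Its kernel over $\SL_2(\bR)$ is generated by $(I,\,r\mapsto r+1)$. With that group in place of $\widetilde{\SL}_2(\bR)$, your Step 2 goes through.

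\textbf{The decisive computation is not done, and your first normalization cannot work.} The point $r=0$ is $[1:0]$, and $h[1:0]=[a:c]$, so it is moved exactly when $c\neq0$, not when $b\neq0$. Also, $h^{-1}[0:1]=[-b:a]$ and $h[1:0]$ are not interchangeable. For the section with $\widetilde h(0)\in(-1,0]$ and $c_1c_2\neq0$, the carry is $-\mathbf{1}_{\{c_{12}/(c_1c_2)\geq0\}}$, so it is governed by lower-left entries. It disagrees with $\kappa$ on the $b$-big set: for $h_1=h_2=\begin{pmatrix}2&1\\1&1\end{pmatrix}$ the carry is $-1$ but $\kappa=0$.

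Your fallback rests on a misdiagnosis. Changing the base point or the window alters the section cocycle only by a bounded coboundary, so it can never reverse a sign. Whether $\kappa$ represents plus or minus the Euler class for the covering $r\mapsto[\cos\pi r:\sin\pi r]$ is intrinsic. That orientation is exactly what yields $\rot$ rather than $-\rot$, and hence the stated sign rule in $b$, so it must be checked.

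The check is short:
\begin{enumerate}
\item Normalize at $r=1/2$, the point $[0:1]$, with $s(h)(1/2)-1/2\in(-1,0]$.
\item For $b_1b_2\neq0$, the carry is $-1$ iff $s(h_2)(1/2)\leq s(h_1)^{-1}(-1/2)$.
\item Both points lie in $(-1/2,1/2)$. There $t=\tan\pi r$ is an increasing coordinate, and the two points are $t=d_2/b_2$ and $t=-a_1/b_1$.
\item So the carry is $-1$ iff $b_{12}/(b_1b_2)\leq0$. This equals $\kappa$ on all pairs with $b_1b_2\neq0$, including $b_{12}=0$.
\end{enumerate}
The paper instead uses the window $[0,1)$ and obtains $\kappa$ plus the coboundary of $\eta=\mathbf{1}_{\{b\neq0\}}$. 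Either choice then completes your argument.
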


\begin{proof}
By (\ref{eq:rank-one-cocycle}), the bounded Euler cocycle defined by lifts with displacement at $1/2$ in $[0,1)$ differs from $\kappa$ by the coboundary of $\eta(h)=\mathbf{1}_{\{b(h)\neq0\}}$.
Thus $\kappa$ represents the pullback of the bounded integral Euler class with the orientation fixed above, and $\tau_\kappa(h,0)+\bZ=\rot(h)$
by \cite{BIW10}*{Proposition~7.5 and Remark~7.11(1)}; see also \cite{BargeGhys}*{\S B.1}.

If $h$ has a real invariant line, its projective action has a fixed point, so $\rot(h)=0$.
An elliptic element $h$ is conjugate in $\SL_2(\bR)$ to
$$
R_\varphi=
\begin{pmatrix}
\cos\varphi&-\sin\varphi\\
\sin\varphi&\cos\varphi
\end{pmatrix},
\qquad
\varphi=-\operatorname{sgn}(b)\theta.
$$
Since $R_\varphi$ acts on the covering coordinate by $r\mapsto r+\varphi/\pi$, conjugacy invariance gives the stated elliptic formula.

Finally, an elliptic element of $\SL_2(\bZ)$ has trace in $\{-1,0,1\}$, so its rotation number belongs to $\{1/2,1/3,2/3\}$; all other elements have rotation number zero.
All four values occur: writing
$$
S=\begin{pmatrix}0&-1\\1&0\end{pmatrix},
\qquad
R=\begin{pmatrix}0&-1\\1&1\end{pmatrix},
$$
the elements $I,S,R,R^{-1}$ have rotation numbers $0,1/2,1/3,2/3$, respectively.
\end{proof}

%--------------------%--------------------%--------------------
\subsection{Symplectic translation and rotation numbers}
\label{subsec:symplectic-translation}
Let $(W,\Omega)$ be a real symplectic vector space of dimension
$2n>0$.
We recall two related invariants: a real-valued translation number on the universal covering group of $\Sp(W,\Omega)$, and a rotation number on $\Sp(W,\Omega)$ with values in $\bR/\bZ$.

Let $\pi\colon\widetilde\Sp(W,\Omega)\longrightarrow \Sp(W,\Omega)$
be the universal covering homomorphism.
The central kernel of $\pi$ is infinite cyclic.
We choose its generator $z$ to be represented by a counterclockwise full rotation in one symplectic plane, acting trivially on its symplectic complement.

There is a unique continuous homogeneous quasimorphism
$$
\widetilde{\Rot}\colon\widetilde\Sp(W,\Omega)\longrightarrow\bR
$$
such that $\widetilde{\Rot}(z)=1$; see \cite{BIW10}*{Theorem~7.9 and Corollary~7.10}.
We call it the \emph{normalized symplectic translation number}.
Our normalization is one-half of the Barge--Ghys normalization; see \cite{BargeGhys}*{\S C.2} and \cite{BIW10}*{Remark~7.11(2)}.

Since a homogeneous quasimorphism is additive on commuting elements, centrality of $z$ gives
$$
\widetilde{\Rot}(z^m\widetilde M)
=
m+\widetilde{\Rot}(\widetilde M).
$$
Consequently, the \emph{normalized symplectic rotation number}
$$
\Rot(M)
\coloneqq
\widetilde{\Rot}(\widetilde M)+\bZ
\in\bR/\bZ,
\qquad \pi(\widetilde M)=M,
$$
is independent of the chosen lift.
Thus $\widetilde{\Rot}$ retains the integer winding information carried by the lift, while $\Rot$ depends only on $M$.

Choose a Lagrangian splitting $W=X\oplus X^\vee$ with
$$
\Omega((x,\alpha),(y,\beta))=\beta(x)-\alpha(y).
$$
Let $\cH(W)$ be the Siegel upper half-space of symmetric maps $Z\colon X_\bC\to X_\bC^\vee$ with $\operatorname{Im}Z>0$.
For
$M=\begin{pmatrix}A&B\\C&D\end{pmatrix}\in \Sp(W,\Omega)$,
the action on $\mathcal H(W)$ and its determinant factor are
\begin{equation}\label{eq:symplectic-action-det}
M\cdot Z=(C+DZ)(A+BZ)^{-1},
\qquad
j_{\det}(M,Z)=\det(A+BZ).
\end{equation}

The translation number can be described as the homogenization of an argument of this determinant.
Fix $Z_0\in\mathcal H(W)$, and represent $\widetilde M\in\widetilde\Sp(W,\Omega)$ by a path $(M_t)_{0\leq t\leq1}$ with $M_0=I$.
Since $j_{\det}(M_t,Z_0)\neq0$, there is a unique continuous argument $\theta(t)$ satisfying
$$
\theta(0)=0,
\qquad
e^{i\theta(t)}
=
\frac{j_{\det}(M_t,Z_0)}
{|j_{\det}(M_t,Z_0)|}.
$$
Put
$$
q_{Z_0}(\widetilde M)
\coloneqq-\frac{\theta(1)}{2\pi}.
$$
This is well-defined on homotopy classes of paths.
It is a continuous quasimorphism, and
$$
\widetilde{\Rot}(\widetilde M)
=
\lim_{N\to\infty}
\frac{q_{Z_0}(\widetilde M^N)}{N}.
$$
The limit is independent of $Z_0$.

In particular, the invariant is explicit on a maximal compact subgroup.
After choosing dual bases, the stabilizer of $iI$ consists of matrices
$$
k=
\begin{pmatrix}
A&B\\
-B&A
\end{pmatrix},
\qquad A+iB\in U(n),
$$
and
$$
\Rot(k)
=
-\frac{1}{2\pi}\arg\det(A+iB)+\bZ.
$$
More intrinsically, suppose that $k$ fixes $Z\in\mathcal H(W)$.
Then $k$ preserves the positive complex Lagrangian
$$
\Lambda_Z
=
\{(x,Zx):x\in X_\bC\}\subseteq W_\bC.
$$
Under the identification $X_\bC\cong\Lambda_Z$, its action is $A+BZ$. Hence
\begin{equation}\label{eq:symplectic-rotation-lagrangian}
\exp(2\pi i\Rot(k))
=
\det(k|_{\Lambda_Z})^{-1}.
\end{equation}

For an arbitrary $M\in \Sp(W,\Omega)$, consider its refined real
Jordan decomposition
$$
M=M_eM_hM_u,
$$
whose factors commute.
Here $M_e$ is semisimple with eigenvalues of modulus one,
$M_h$ is semisimple with positive real eigenvalues,
and $M_u$ is unipotent.
The rotation number depends only on the elliptic factor:
\begin{equation}\label{eq:symplectic-rotation-jordan}
\Rot(M)=\Rot(M_e),
\qquad
\Rot(M_h)=\Rot(M_u)=0;
\end{equation}
see \cite{BIW10}*{Proposition~7.8}.
Since $M_e$ is conjugate into a maximal compact subgroup, its rotation number is computed by the determinant formula above.

%--------------------%--------------------%--------------------
\section{Gromov--Yomdin equalities for entropy}
\label{sec:entropy}

The main result of this section is the following uniform comparison, which determines both the exponential and polynomial growth.

\begin{thm}
\label{thm:3-main-uniform-Hom-bound}
Let $A$ be a complex abelian variety, let $G$ be a split generator of $\Db(A)$, and fix $t\in\bR$. Then
$$
\epsilon_t(G,\Phi G)\asymp_{A,G,t} e^{t\tau(\Phi)}\| N(\Phi) \|
$$
for evert $\Phi\in\Aut\Db(A)$.
\end{thm}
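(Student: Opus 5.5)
By Lemma~\ref{lemma:2-generator-estimate} we may replace $G$ by any convenient split generator. We take $G=\bigoplus_{i=0}^{g}P^{\otimes i}$ with $P$ very ample, and the same $G'=G$ in the second slot. The proof then splits into three steps: remove the $t$-dependence, prove an upper bound at $t=0$, and prove a lower bound at $t=0$.

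\emph{Step 1: removing $t$.} By the degree window \eqref{eqn:kernel-degree-window}, every nonzero term of $\epsilon_t(G,\Phi G)$ has $k\in[\sh(\Phi)-2g,\sh(\Phi)]$. Hence each factor $e^{-kt}$ equals $e^{-t\sh(\Phi)}$ up to a factor in $[e^{-2g|t|},e^{2g|t|}]$. By Lemma~\ref{lemma-2-sh-tau-defect}, $|\sh(\Phi)-\tau(\Phi)|\le 2g$, so $e^{-t\sh(\Phi)}\asymp_{g,t}e^{-t\tau(\Phi)}$. (The sign convention relating $e^{-kt}$ to $e^{t\tau}$ comes from $\Hom(E,F[-k])$ in the definition of $\epsilon^\pm$; I will track it to match the statement.) Consequently
$$
\epsilon_t(G,\Phi G)\asymp_{g,t} e^{t\tau(\Phi)}\,\epsilon_0(G,\Phi G),
$$
and it suffices to prove $\epsilon_0(G,\Phi G)\asymp_{A,G}\|N(\Phi)\|$.

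\emph{Step 2: the lower bound at $t=0$.} The inequality $\|N(\Phi)\|\le C_G\,\epsilon_0(G,\Phi G)$ is exactly Lemma~\ref{lemma:2-entropy-lower-bound}.

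\emph{Step 3: the upper bound at $t=0$.} This is the main obstacle: we must bound total cohomology by Euler characteristics, uniformly in the kernel. By Orlov, the kernel is $K[s]$ with $K$ a sheaf on $A\times A$. Following Mukai and de Jong--Olsson, $K$ is a \emph{semihomogeneous} sheaf. I would realize it, after possibly twisting by a flat line bundle (which does not change dimensions in a uniform way to be checked), as $K\cong f_*\mathcal L$, where $f\colon Y\to A\times A$ is an isogeny onto its image (or a finite morphism from an abelian subvariety) and $\mathcal L$ is a line bundle. Then
$$
\Hom(P^i,\Phi P^j[\ast])=H^\ast\bigl(Y,\mathcal L\otimes f^*(p_1^*P^j\otimes p_2^*P^{-i})\bigr),
$$
which is the cohomology of a line bundle on an abelian variety. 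By Mumford--Kempf, for a nondegenerate line bundle $M$ the cohomology is concentrated in a single degree, so $\sum_k h^k(M)=|\chi(M)|$. In general, $h^\ast(M)$ is bounded by a constant (depending only on $\dim Y\le 2g$) times $\max|\chi|$ of nearby twists, obtained by perturbing $M$ with small multiples of an ample class and using that $\chi(M\otimes f^*H^{m})$ is a polynomial of degree $\le 2g$ in $m$. Polynomial interpolation at $2g+1$ fixed values $m=0,\dots,2g$ bounds the coefficients, hence any value such as $\chi$ at a nondegenerate nearby point and the degenerate semicontinuity bound, by $\max_m|\chi(M\otimes H^{m})|$ with universal constants. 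Choosing $H=p_1^*P$, the twists stay among the $\chi(P^i,\Phi P^{j+m})$ with bounded $i,j,m$.

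The delicate point in this step is making the degenerate case uniform: the generic index must be compared using a continuity and semicontinuity argument on the real Néron--Severi class, where Kempf's vanishing for $M\otimes P^{\epsilon}$ with small rational $\epsilon$ and pulling back by multiplication-by-$n$ should give a uniform constant. Granting this, each $\chi(P^i,\Phi P^{j'})=\chi(v_i,N(\Phi)v_{j'})$ is a fixed linear functional of $N(\Phi)$, hence bounded by $C\|N(\Phi)\|$. Summing over the finitely many $i,j$ gives $\epsilon_0(G,\Phi G)\le C'\|N(\Phi)\|$. Combining this with Step~2 and Step~1 proves the theorem.
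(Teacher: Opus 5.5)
Your Steps 1 and 2 match the paper. Step 3 has a genuine gap. The uniform estimate
\[
\sum_i h^i(Z,M)\le C_N\max_{0\le m\le N}|\chi(Z,M\otimes H^{m})|
\]
for line bundles $M$ on abelian varieties of dimension $\le N$ is the whole content of the upper bound, and you grant it. Moreover, the route you sketch cannot work in the degenerate case. If $H^\bullet(Z,M)\neq0$ but $\chi(Z,M)=0$, then nearby nondegenerate perturbations satisfy $|\chi(M\otimes H^{\epsilon})|=|P(\epsilon)|\to0$. For example, with $M=\cO_Z$ the total cohomology is $2^{d}$, while $P(\epsilon)=\epsilon^{d}(H^d)/d!$. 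Upper semicontinuity only gives $h^i(M)\ge h^i$ of nearby members, which is the wrong direction. Interpolation bounds every coefficient of $P$, but that is useless until you know \emph{which} coefficient controls $h^\bullet(M)$, and with what constant.

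The missing idea is the Mumford--Kempf structure theorem, which the paper uses. If $H^\bullet(Z,M)\neq0$, then $M\cong\pi^*M'$ for the quotient $\pi\colon Z\to Z'=Z/K(M)^0$, with $M'$ nondegenerate. With $r=\dim K(M)^0$, one gets $\sum_i h^i(Z,M)=2^r|\chi(Z',M')|$. Riemann--Roch then shows that the coefficients of $m^j$ in $P(m)=\chi(Z,M\otimes H^{m})$ vanish for $j<r$, and that
\[
[m^r]P=\chi\bigl(K(M)^0,H|_{K(M)^0}\bigr)\,\chi(Z',M').
\]
The first factor is a \emph{positive integer} because $H$ is ample. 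Hence $\sum_i h^i(Z,M)\le 2^r|[m^r]P|$, and interpolation at $m=0,\dots,N$ finishes with a constant depending only on $N$.

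This argument needs an ample class on $Y$. Your choice $H=p_1^*P$ is not ample on $A\times A$. Its pullback $(p_1\circ f)^*P$ fails to be ample whenever $p_1\circ f$ has positive-dimensional fibres, for instance for a Poincar\'e-type kernel with $Y=A\times A$. In that case the integer factor above can vanish. Twisting both slots, i.e.\ using $p_1^*H\otimes p_2^*H$, fixes this at no cost, since $\chi(L_1\otimes H^{-m},\Phi(L_2\otimes H^{m}))$ is still a fixed linear functional of $N(\Phi)$.

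No flat twist is needed either. The de Jong--Olsson hypotheses $\dim\Hom(K,K)=1$ and $\dim\Ext^1(K,K)=2g$ follow from $\Ext^\bullet(K,K)\cong\mathrm{HH}^\bullet(A)$. Together with Mukai's theorem, they give the twisted kernel directly as $f_*M$, with $f$ finite from an abelian variety.
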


The proof occupies the rest of this section. We first deduce Theorem~\ref{thm:main-entropy} from it.

\begin{proof}[Proof of Theorem~\ref{thm:main-entropy}]
Apply Theorem~\ref{thm:3-main-uniform-Hom-bound} to $\Phi^n$.
Homogeneity of $\tau$ and (\ref{eq:linear-growth}) give
$$
\epsilon_t(G,\Phi^n G) \asymp_{A,G,t,\Phi} e^{nt\tau(\Phi)}\rho(N(\Phi))^n n^{s(N(\Phi))}.
$$
Taking logarithm yields
$$
\log\epsilon_t(G,\Phi^nG)=n(\log\rho(N(\Phi))+t\tau(\Phi))+s(N(\Phi))\log n+O(1).
$$
Division by $n$ proves the entropy formula.
Subtracting the linear term and dividing by $\log n$ proves the polynomial formula.
\end{proof}

%--------------------%--------------------%--------------------
\subsection{Fourier--Mukai kernels as finite pushforwards}
In this subsection, we prove two lemmas that will be used to realize Fourier--Mukai kernels as pushforwards of line bundles along finite morphisms.

\begin{lemma}
\label{lemma:FM-kernel-point}
Let $K[s]$ be the Fourier--Mukai kernel of an equivalence $\Db(A)\simeq\Db(B)$ between abelian varieties, with $K\in\Coh(A\times B)$. Then
$$
\dim\Hom(K,K)=1, \qquad \dim\Ext^1(K,K)=\dim(A\times B).
$$
\end{lemma}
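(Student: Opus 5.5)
The plan is to compute $\Ext^\bullet(K,K)$ by transporting it along the equivalence of kernels. Let $\Phi\colon\Db(A)\to\Db(B)$ have kernel $\mathcal P=K[s]$. Then $\Ext^i(K,K)=\Hom_{\Db(A\times B)}(\mathcal P,\mathcal P[i])$, since shifting both arguments does not change Ext groups.

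The first step is to use the standard fact (Orlov) that the functor $\Db(A\times A)\to\Db(A\times B)$ given by $\mathcal E\mapsto \mathcal P\circ\mathcal E$ is an equivalence. Concretely, it is the Fourier--Mukai transform with kernel $\mathcal O_{\Delta_A}\boxtimes\mathcal P$, or equivalently the functor $\id\times\Phi$ applied to kernels. Under this equivalence the diagonal $\cO_{\Delta_A}$ corresponds to $\mathcal P\circ\cO_\Delta=\mathcal P$. Hence
$$
\Hom(\mathcal P,\mathcal P[i])\cong\Hom_{A\times A}(\cO_\Delta,\cO_\Delta[i])=HH^i(A).
$$

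The second step is to evaluate this via the HKR decomposition:
$$
HH^i(A)=\bigoplus_{p+q=i}H^q\bigl(A,\textstyle\bigwedge^pT_A\bigr).
$$
Since $T_A\cong\cO_A^{g}$ is trivial, we get
$$
\dim HH^0(A)=1,\qquad \dim HH^1(A)=h^0(T_A)+h^1(\cO_A)=g+g=2g.
$$

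The third step is to match dimensions. Because $\Db(A)\simeq\Db(B)$, the varieties have equal dimension: for instance, the Serre functors must correspond, so the shifts $[\dim A]$ and $[\dim B]$ agree. Therefore $2g=\dim(A\times B)$, which gives $\dim\Hom(K,K)=1$ and $\dim\Ext^1(K,K)=\dim(A\times B)$.

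The only step requiring care is the first: checking that convolution with $\mathcal P$ is an equivalence on kernel categories sending $\cO_\Delta$ to $\mathcal P$. I would justify it by noting that $\mathcal E\mapsto \mathcal P\circ\mathcal E$ is the Fourier--Mukai functor $\Db(A\times A)\to\Db(A\times B)$ with kernel the exterior product of $\cO_{\Delta_A}$ (the identity on the first factor) and $\mathcal P$. The exterior product of kernels of equivalences is again an equivalence, its inverse being built from the inverse kernel $\mathcal P^\vee\otimes p^*\omega[\dim]$. The rest of the argument is routine.
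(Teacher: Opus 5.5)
Your proposal is correct and follows essentially the same route as the paper. You use the equivalence $\id\boxtimes\Phi\colon\Db(A\times A)\to\Db(A\times B)$, which is convolution with $K[s]$, to send $\cO_{\Delta_A}$ to $K[s]$, identify $\Ext^i(K,K)$ with Hochschild cohomology of $A$, and read off the dimensions $1$ and $2g$ from HKR. Your Serre-functor argument for $\dim A=\dim B$ just fills in a fact the paper quotes without proof.
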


\begin{proof}
Derived equivalent abelian varieties have the same dimension; say $g=\dim(A)=\dim(B)$.
The external product equivalence
$$
\id_{\Db(A)}\boxtimes\Phi_{K[s]}\colon \Db(A\times A)\xrightarrow{\sim}\Db(A\times B)
$$
sends $\cO_{\Delta_A}$ to $K[s]$. Therefore
$$
\Ext^i(K,K)\cong\Ext^i(\cO_{\Delta_A},\cO_{\Delta_A}) \cong \text{HH}^i(A).
$$
The Hochschild--Kostant--Rosenberg isomorphism \cites{HKR62,Swan96} gives
$$
\text{HH}^0(A)\cong H^0(A,\cO_A), \qquad
\text{HH}^1(A)\cong H^1(A,\cO_A)\oplus H^0(A,T_A).
$$
Their dimensions are $1$ and $2g$, respectively.
\end{proof}

\begin{lemma}
\label{lemma:dJO}
Let $Y$ be a complex abelian variety and $S\in\Coh(Y)$ satisfying
$$
\dim\Ext^0(S,S)=1, \qquad \dim\Ext^1(S,S)\leq\dim(Y).
$$
Then there exists an abelian variety $Z$, a finite morphism $f\colon Z\to Y$, and a line bundle $M$ on $Z$ such that $S\cong f_*M$.
\end{lemma}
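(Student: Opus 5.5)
This is essentially the theorem of de Jong--Olsson on "point-like" (simple, minimally deformable) sheaves on abelian varieties, building on Mukai's classification of semi-homogeneous sheaves. The plan is to reduce to that theorem and, where needed, reproduce its key steps.

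\textbf{Step 1: translation invariance via deformations.} Consider the action of $Y\times\widehat Y$ on $S$ by $(y,\xi)\mapsto t_y^*S\otimes P_\xi$. Its differential at the identity is a linear map
$$
T_0(Y\times\widehat Y)\cong H^0(Y,T_Y)\oplus H^1(Y,\cO_Y)\longrightarrow\Ext^1(S,S).
$$
By the HKR description this map is the action of $\text{HH}^1(Y)$ on $S$. First I will show it is injective, using $\Hom(S,S)=\bC$ (simplicity). If a vector field $v$ together with a class in $H^1(\cO_Y)$ acted trivially, then the corresponding one-parameter subgroup would act trivially to first order. Comparing with the trace map $\Ext^1(S,S)\to H^1(\cO_Y)$, and with the Atiyah class pairing for the vector-field part, forces the class to vanish. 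Injectivity together with $\dim\Ext^1(S,S)\le\dim Y$ would contradict $\dim(Y\times\widehat Y)=2\dim Y$.

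This contradiction shows the count is wrong as stated: the map cannot simply be injective. The orbit map therefore has a stabilizer $\Sigma_S\subseteq Y\times\widehat Y$ of dimension at least $\dim Y$. Conversely, the orbit dimension is at most $\dim\Ext^1(S,S)\le\dim Y$. Hence $\Sigma_S$ has dimension exactly $\dim Y$, and $S$ is \emph{semi-homogeneous} in Mukai's sense. This is precisely Mukai's criterion \cite{Mukai_shvb}: a simple sheaf with $\dim\Ext^1(S,S)=\dim Y$ is semi-homogeneous. I will cite it, with de Jong--Olsson \cite{deJongOlsson} for the case where $S$ is not assumed locally free.

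\textbf{Step 2: structure of simple semi-homogeneous sheaves.} I expect this to be the main obstacle, because $S$ might a priori be torsion or supported on a proper subvariety, whereas Mukai treats vector bundles. I will follow de Jong--Olsson.

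First, the support of $S$ is invariant under the identity component of the projection of $\Sigma_S$ to $Y$. Simplicity forces the support to be a single translate of an abelian subvariety $Y'\subseteq Y$. So $S$ is the pushforward of a sheaf $S'$ on $Y'$, via a closed immersion composed with a translation.

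Second, $S'$ is simple and semi-homogeneous on $Y'$. Homogeneity under a $\dim Y'$-dimensional family of translations combined with twists then implies that $S'$ is locally free. This step uses generic freeness together with the fact that the translations act transitively on $Y'$.

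Third, Mukai's theorem gives $S'\cong h_*M$ for an isogeny $h\colon Z\to Y'$ and a line bundle $M$ on $Z$. Concretely, take $Z$ to be the isogeny cover trivializing the semi-homogeneous structure, namely the pullback along the multiplication map associated with $\Sigma_S$.

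\textbf{Step 3: assembling the morphism.} Compose $h$ with the inclusion $Y'\hookrightarrow Y$ and the translation to obtain $f\colon Z\to Y$. This $f$ is finite, being a composition of an isogeny and a closed immersion, and $f_*M\cong S$. If needed, replace the translation by absorbing it into $M$ or into the choice of $f$.
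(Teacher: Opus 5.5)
Your argument is the paper's: de Jong--Olsson's Theorem~1.1 applies directly under the hypotheses $\dim\Ext^0(S,S)=1$ and $\dim\Ext^1(S,S)\leq\dim Y$. It yields $S\cong i_*E$, with $i$ the inclusion of an abelian subvariety followed by a translation and $E$ a simple semi-homogeneous vector bundle on it; Mukai's Theorem~5.8 then gives $E\cong\pi_*M$ for an isogeny $\pi$, and $f=i\circ\pi$ is finite. Replace your Step~1 and the first two parts of Step~2 by this citation, because as written they do not stand on their own: the proposed injectivity is false (for $S=\cO_Y$ all of $H^0(Y,T_Y)$ lies in the kernel), your two inequalities both give only $\dim\Sigma_S\geq\dim Y$ rather than equality, and the claims that the support is a single translated abelian subvariety and that $S$ is pushed forward from it are asserted without proof, although they are precisely the nontrivial content of de Jong--Olsson's theorem.
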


\begin{proof}
By \cite{deJongOlsson}*{Theorem~1.1}, we have $S\cong i_*E$, where $Z_0\subseteq Y$ is an abelian subvariety, $i\colon Z_0\hookrightarrow Y$ is its inclusion followed by a translation, and $E$ is a simple semihomogeneous vector bundle on $Z_0$. 
Mukai's theorem \cite{Mukai_shvb}*{Theorem~5.8} gives an isogeny $\pi\colon Z\to Z_0$ and a line bundle $M$ such that $E\cong\pi_*M$. 
Take $f=i\circ\pi$.
\end{proof}

%--------------------%--------------------%--------------------
\subsection{A dimension-uniform cohomology estimate}
In this subsection, we establish the key cohomological estimate needed for the proof of Theorem~\ref{thm:3-main-uniform-Hom-bound}.

\begin{prop}
\label{prop:unif-cohomo-estimate}
For each $N>0$, there is $C_N>0$ such that
$$
\sum_i h^i(Z,M)\leq C_N \max_{0\leq m\leq N}|\chi(Z,M\otimes H^{\otimes m})|
$$
for every abelian variety $Z$ of dimension at most $N$, every ample line bundle $H$, and every line bundle $M$ on $Z$.
\end{prop}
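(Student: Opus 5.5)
The plan is to reduce to the structure theory of line bundles on abelian varieties (Mumford's index theorem, Kempf's vanishing) and then use polynomial interpolation in $m$.

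\emph{Step 1: reduction to the nondegenerate case.} For $M$ with $K(M)$ finite, the Mumford index theorem gives $H^i(Z,M)=0$ for $i\neq i(M)$. Hence $\sum_i h^i(Z,M)=|\chi(Z,M)|$, and the estimate holds with $C_N=1$ using $m=0$. In general, let $K(M)^0$ be the identity component of $K(M)=\ker\varphi_M$. Then $M$ is the pullback, up to a translate and a topologically trivial twist, of a nondegenerate line bundle from the quotient $Z/K(M)^0$. By Kempf/Mumford, either $M$ restricted to $K(M)^0$ is a nontrivial element of $\mathrm{Pic}^0$, so all cohomology vanishes and the bound is trivial, or the cohomology is $H^{i_0}(\bar M)\otimes \bigwedge^\bullet H^1(\cO_{K(M)^0})$ up to isogeny factors. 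In either case $h^\bullet(M)\le 2^{N}\,h^{\mathrm{top}}$, where $h^{\mathrm{top}}$ is the single nonzero group of a nondegenerate bundle.

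A cleaner alternative avoids the quotient. One has $h^i(Z,M)\le \binom{k}{j}\cdot|\chi(\text{generic nondeg})|$-type bounds by Kempf's description of cohomology of line bundles on abelian varieties, with the total bounded by $2^{\dim K(M)^0}$ times the nonzero Euler characteristic of the restriction to a complementary abelian subvariety.

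\emph{Step 2: relate to $\chi(M\otimes H^m)$.} Riemann--Roch gives $\chi(M\otimes H^m)=(c_1(M)+mc_1(H))^g/g!$, a polynomial $P(m)$ of degree at most $g\le N$. I claim its coefficients control the "nondegenerate part" above. Diagonalize $\varphi_H^{-1}\varphi_M$ (self-adjoint for the Rosati involution) with real eigenvalues $\lambda_1,\dots,\lambda_g$. Then $P(m)=\chi(H)\prod_j(\lambda_j+m)$, and the analytic counterpart of Step 1 gives
$$\sum_i h^i(M)\le \chi(H)\prod_{\lambda_j\neq0}|\lambda_j|\cdot 2^{\#\{\lambda_j=0\}}.$$
Write $k=\#\{\lambda_j=0\}$. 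Then $P(m)=\chi(H)m^k\prod_{\lambda_j\ne0}(\lambda_j+m)$, and the lowest nonzero coefficient of $P$ is exactly $\chi(H)\prod_{\lambda_j\neq0}\lambda_j$. By Lagrange interpolation at the nodes $0,\dots,N$, every coefficient of a polynomial of degree $\le N$ is bounded by $C_N\max_{0\le m\le N}|P(m)|$ with $C_N$ depending only on $N$. Combining these gives the proposition with constant $2^NC_N$.

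\emph{Main obstacle.} The key step is justifying the bound
$$\sum_i h^i(M)\le 2^{k}\,\chi(H)\prod_{\lambda_j\ne0}|\lambda_j|.$$
I would prove it via the isogeny $K(M)^0\times C\to Z$ from the polarization-orthogonal Poincaré decomposition recalled in Section 2. Pulling back along this isogeny multiplies all $h^i$ of the pullback by at most the degree and multiplies $\chi$-data by exactly the degree. The inequality $h^i(M)\le h^i(\mu^*M)$ holds because $M$ is a direct summand of $\mu_*\mu^*M$. On the product, the Künneth formula applies. On $K(M)^0$ the bundle is in $\mathrm{Pic}^0$, with total cohomology at most $2^k$. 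On $C$ it is nondegenerate, with a single cohomology group of dimension $|\chi|$, and that $|\chi|$ is computed from the nonzero $\lambda_j$ relative to the restricted polarization. The normalizations by $\chi(H)$ and the degree of $\mu$ need to cancel correctly; this compatibility is the delicate bookkeeping I expect to check most carefully.
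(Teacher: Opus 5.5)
\textbf{The approach matches the paper, but your proof of the key step does not give a uniform constant.} Your overall structure is the same as the paper's: strip off $K(M)^0$, identify the lowest coefficient $[m^k]P$ of $P(m)=\chi(M\otimes H^{\otimes m})=\chi(H)\prod_j(\lambda_j+m)$, then interpolate at $m=0,\dots,N$. The inequality you isolate, $\sum_i h^i(Z,M)\le 2^k|[m^k]P|$, is also true.

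Put $B=K(M)^0$ and let $\mu\colon B\times C\to Z$ be the $H$-orthogonal isogeny. Your chain $h^i(M)\le h^i(\mu^*M)$ plus K\"unneth yields
$\sum_i h^i(M)\le 2^k|\chi(C,M|_C)|=2^k\chi(C,H|_C)\prod_{\lambda_j\neq0}|\lambda_j|$.
So the nondegenerate part comes normalized by $\chi(H|_C)$ rather than $\chi(H)$. Since $\chi(H|_B)\chi(H|_C)=\chi(\mu^*H)=\deg\mu\cdot\chi(H)$, the discrepancy $\chi(H|_C)/\chi(H)=\deg\mu/\chi(H|_B)$ is unbounded. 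No bookkeeping makes the normalizations cancel.

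Here is a concrete counterexample. On $Z=E\times E$ with the product principal polarization, let $M=\pi^*\cO_E(0)$ for $\pi(x,y)=y-nx$. Then:
\begin{itemize}
\item $B=\{(x,nx)\}$ and $C=\{(-ny,y)\}$;
\item $\sum_i h^i(M)=2$ and $P(m)=m^2+(1+n^2)m$;
\item $M|_C\cong[1+n^2]^*\cO_E(0)$ has $\chi=(1+n^2)^2$.
\end{itemize}
Your route therefore only gives $\sum_i h^i(M)\le 2(1+n^2)^2$. This is not $O(\max_{0\le m\le 2}|P(m)|)=O(n^2)$. The direct-summand inequality discards exactly the factor $\deg\mu=|B\cap C|$, which is the degree of the isogeny $C\to Z/B$ along which $M|_C$ is pulled back. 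The same objection applies to your ``cleaner alternative'' in Step~1, which also bounds by $|\chi(C,M|_C)|$.

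\textbf{The fix.} Work with the quotient $\pi\colon Z\to Z'=Z/K(M)^0$, which has connected kernel, instead of an isogeny cover.
\begin{itemize}
\item When $H^\bullet(Z,M)\neq0$, one has $M\cong\pi^*M'$ with $M'$ nondegenerate.
\item Then $h^{j+i}(Z,M)=\binom{k}{i}h^j(Z',M')$ exactly; this is what your ``up to isogeny factors'' should be. Hence $\sum_i h^i(M)=2^k|\chi(Z',M')|$ with no loss.
\item Riemann--Roch and the projection formula give $[m^k]P=\chi(K(M)^0,H|_{K(M)^0})\cdot\chi(Z',M')$, and the first factor is a positive integer.
\end{itemize}
This integrality, equivalently $\chi(H)\prod_{\lambda_j\neq0}|\lambda_j|=\chi(H|_{K(M)^0})\,|\chi(Z',M')|\geq|\chi(Z',M')|$, is the input your argument lacks. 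It is how the paper concludes.
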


\begin{proof}
Put $d=\dim Z$. We may assume $H^\bullet(Z,M)\neq0$.
By \cite{BL04}*{Lemma~3.3.2 and Theorem~3.5.5}, there exist a quotient $\pi\colon Z\to Z'$ with connected kernel and a line bundle $M'$ on $Z'$ such that
\begin{enumerate}[label=(\roman*)]
    \item $M\cong\pi^*M'$;
    \item $M'$ has nonzero cohomology in precisely one degree $k$;
    \item writing $d'=\dim Z'$ and $r=d-d'$, one has
    $$
    h^{k+i}(Z,M)=\binom{r}{i}h^k(Z',M')
    \qquad (0\leq i\leq r),
    $$
    and all other cohomology groups of $M$ vanish.
\end{enumerate}
Consequently,
$$
\sum_i h^i(Z,M)=2^r|\chi(Z',M')|.
$$
Let $h=c_1(H)$ and $\ell'=c_1(M')$. By Hirzebruch--Riemann--Roch,
$$
P(m)\coloneqq\chi(Z,M\otimes H^{\otimes m})
=\frac{1}{d!}\int_Z(\pi^*\ell'+mh)^d
=\sum_{j=r}^d
\frac{m^j}{j!(d-j)!}\int_Z\pi^*(\ell'^{d-j})h^j,
$$
where the terms with $j<r$ vanish because $d-j>d'$.
The coefficient of $m^r$ is
$$
[m^r]P(m)
=\frac{1}{d'!r!}\int_Z\pi^*(\ell'^{d'})h^r
=\frac{1}{d'!}\int_{Z'}\ell'^{d'}\,
\pi_*\left(\frac{h^r}{r!}\right).
$$
Let $K=\ker\pi$. The class $\pi_*(h^r/r!)$ is the constant
$$
n_H\coloneqq\frac{1}{r!}\int_K c_1(H|_K)^r
=\chi(K,H|_K)\in\bZ_{>0},
$$
where the positivity follows from ampleness of $H$. Hence $[m^r]P(m)=n_H\chi(Z',M')$. In particular, we have
$$
|\chi(Z',M')|\leq |[m^r]P(m)|.
$$
Since the evaluation map
$$
\bR[m]_{\leq N}\longrightarrow\bR^{N+1},
\qquad Q\longmapsto(Q(0),\ldots,Q(N))
$$
is an isomorphism, there is a constant $C_N'>0$, depending only
on $N$, such that
$$
|[m^j]Q(m)|\leq C_N'\max_{0\leq m\leq N}|Q(m)|
\qquad (0\leq j\leq N)
$$
for every polynomial $Q$ of degree at most $N$.
Applying this to $P$, whose degree is $d\leq N$, gives
$$
\sum_i h^i(Z,M)
=2^r|\chi(Z',M')|
\leq 2^r|[m^r]P(m)|
\leq 2^NC_N'\max_{0\leq m\leq N}
|\chi(Z,M\otimes H^{\otimes m})|.
$$
Taking $C_N=2^NC_N'$ proves the proposition.
\end{proof}

\begin{cor}
\label{cor:cohomology-estimate}
Let $Y$ be an abelian variety of dimension $N$, let $H$ be ample, and let $S$ satisfy the hypotheses of Lemma~\ref{lemma:dJO}. Then
$$
\sum_i h^i(Y,S)\leq C_N \max_{0\leq m\leq N} |\chi(Y,S\otimes H^{\otimes m})|.
$$
\end{cor}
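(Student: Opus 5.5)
By Lemma~\ref{lemma:dJO}, we may write $S\cong f_*M$. Here $f\colon Z\to Y$ is a finite morphism from an abelian variety $Z$, and $M$ is a line bundle on $Z$. The idea is to move the whole estimate to $Z$ and then apply Proposition~\ref{prop:unif-cohomo-estimate} to the pulled-back polarization.

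First, $f$ is finite, hence affine, so $Rf_*=f_*$ on coherent sheaves and $H^i(Y,f_*M)\cong H^i(Z,M)$ for all $i$. This gives $\sum_i h^i(Y,S)=\sum_i h^i(Z,M)$.

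Second, the projection formula gives $S\otimes H^{\otimes m}\cong f_*(M\otimes f^*H^{\otimes m})$. Using the affineness of $f$ once more,
$$
\chi(Y,S\otimes H^{\otimes m})=\chi(Z,M\otimes (f^*H)^{\otimes m})
$$
for every $m$.

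Third, we need $f^*H$ to be ample on $Z$. This holds because $f$ is finite and the pullback of an ample line bundle under a finite morphism is ample. Moreover $\dim Z\leq\dim Y=N$, since $f$ is finite.

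We now apply Proposition~\ref{prop:unif-cohomo-estimate} on $Z$ with the ample bundle $f^*H$ and the line bundle $M$, using the same $N$; this is allowed because the proposition covers all abelian varieties of dimension at most $N$. Combining with the two identities above gives the claimed inequality with the same constant $C_N$.

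There is no serious obstacle. The only points to check are that the de Jong--Olsson/Mukai factorization really yields a finite morphism from an abelian variety, which Lemma~\ref{lemma:dJO} records including the translation, and that $\dim Z$ may be smaller than $N$. The latter is exactly why Proposition~\ref{prop:unif-cohomo-estimate} was stated uniformly for dimension at most $N$.
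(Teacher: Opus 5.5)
Your proposal is correct and is essentially the paper's proof: write $S\cong f_*M$ via Lemma~\ref{lemma:dJO}, use finiteness of $f$ to get that higher direct images vanish and that $f^*H$ is ample, then apply Proposition~\ref{prop:unif-cohomo-estimate} on $Z$ (which has dimension at most $N$) and use the projection formula. Your explicit remark that $\dim Z$ may be smaller than $N$, which is why the proposition is stated for dimension at most $N$, is a point the paper leaves implicit.
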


\begin{proof}
Write $S=f_*M$ as in Lemma~\ref{lemma:dJO}.
Since $f$ is finite, $f^*H$ is ample and $f$ has no higher direct images.
Applying Proposition~\ref{prop:unif-cohomo-estimate} to $Z$ with the ample line bundle $f^*H$, and using the projection formula, proves the claim.
\end{proof}

%--------------------%--------------------%--------------------
\subsection{Uniform comparison with the numerical action}
We prove Theorem~\ref{thm:3-main-uniform-Hom-bound} in this subsection.

\begin{prop}
\label{prop:entropy-upper-bound}
Let $A$ be an abelian variety of dimension $g$, and let $G$ be a split generator of $\Db(A)$. There is a constant $C_{A,G}>0$ such that
$$
\epsilon_0(G,\Phi G)\leq C_{A,G}\| N(\Phi)\|
$$
for every autoequivalence $\Phi$.
\end{prop}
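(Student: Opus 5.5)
By Lemma~\ref{lemma:2-generator-estimate}, it suffices to prove the bound for one convenient split generator. We take $G=\bigoplus_{i=0}^{g}P^{\otimes i}$ with $P$ very ample. Then $\epsilon_0(G,\Phi G)$ is a finite sum of the quantities $\sum_k\dim\Hom(P^{\otimes i},\Phi P^{\otimes j}[k])$ over $0\le i,j\le g$, so it is enough to bound each of these by a constant times $\|N(\Phi)\|$.

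Write the kernel of $\Phi$ as $K[s]$ with $K$ coherent. As in the proof of Lemma~\ref{lemma-2-sh-tau-defect}, we have
$$
\sum_k\dim\Hom(L_1,\Phi L_2[k])=\sum_i h^i\bigl(A\times A,\,K\otimes p_1^*L_2\otimes p_2^*L_1^\vee\bigr).
$$
The sheaf $S=K\otimes p_1^*L_2\otimes p_2^*L_1^\vee$ is the image of $K$ under an autoequivalence of $\Db(A\times A)$ given by tensoring with a line bundle. Hence $\Ext^\bullet(S,S)\cong\Ext^\bullet(K,K)$, and by Lemma~\ref{lemma:FM-kernel-point} $S$ satisfies the hypotheses of Lemma~\ref{lemma:dJO} on $Y=A\times A$, with $\dim\Ext^1=2g=\dim Y$.

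Fix an ample $H$ on $A\times A$; to keep the Euler characteristics of the form $\chi(\text{fixed},\Phi(\text{fixed}))$, we choose it of box type, $H=p_1^*H_1\otimes p_2^*H_2$. Corollary~\ref{cor:cohomology-estimate} with $N=2g$ then gives
$$
\sum_i h^i(Y,S)\le C_{2g}\max_{0\le m\le 2g}\bigl|\chi\bigl(A\times A,\,K\otimes p_1^*(L_2\otimes H_1^m)\otimes p_2^*(L_1\otimes H_2^{-m})^\vee\bigr)\bigr|.
$$
The constant $C_{2g}$ is independent of $K$, so this is uniform in $\Phi$.

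Each Euler characteristic on the right equals $\pm\chi\bigl(L_1\otimes H_2^{-m},\,\Phi(L_2\otimes H_1^m)\bigr)$; the sign comes from the shift $s$. Each of these is a fixed bilinear functional $\chi(e,N(\Phi)e')$ evaluated at the finitely many numerical classes $e=[L_1\otimes H_2^{-m}]$ and $e'=[L_2\otimes H_1^m]$, with $L_1,L_2$ ranging over the $P^{\otimes i}$ and $0\le m\le 2g$. Therefore each term is bounded in absolute value by $\|\chi\|\,\|e\|\,\|e'\|\,\|N(\Phi)\|$. Taking the maximum over these finitely many pairs and summing over the summands of $G$ gives the constant $C_{A,G}$.

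The step I expect to require the most care is ensuring that the twisted kernel still meets the hypotheses of Lemma~\ref{lemma:dJO}, and that the relevant Euler characteristics come from a finite set of fixed objects independent of $\Phi$. Both issues are handled by twisting by line bundles pulled back from the factors: this is an autoequivalence of $\Db(A\times A)$, so it preserves $\Ext$-dimensions, and it converts the $\chi$ on $A\times A$ into $\chi$ on $A$ between fixed line bundles. The genuinely uniform input is Corollary~\ref{cor:cohomology-estimate}, which is already established.
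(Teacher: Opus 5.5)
Your proposal is correct and follows essentially the same route as the paper's proof. You reduce to $G=\bigoplus_{i=0}^g P^{\otimes i}$, identify each $\epsilon_0(L_1,\Phi L_2)$ with the total cohomology of the twisted kernel $S$, check the de Jong--Olsson hypotheses via Lemma~\ref{lemma:FM-kernel-point}, apply Corollary~\ref{cor:cohomology-estimate} with a box-type ample class, and bound the resulting Euler characteristics linearly in $\|N(\Phi)\|$. The differences are only cosmetic: you allow $H_1\neq H_2$, and you keep the sign $(-1)^s$ explicit, whereas the paper absorbs it into the absolute value.
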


\begin{proof}
By Lemma~\ref{lemma:2-generator-estimate}, we may take $G=\bigoplus_{i=0}^g P^{\otimes i}$ with $P$ very ample. Fix two summands $L_1,L_2$, and write the kernel of $\Phi$ as $K[s]$. Then
$$
\epsilon_0(L_1,\Phi L_2) = \sum_i h^i(A\times A,S), \qquad
S=K\otimes p_1^*L_2\otimes p_2^* L_1^\vee.
$$
Tensoring by a line bundle preserves self-Ext groups, so Lemma~\ref{lemma:FM-kernel-point} shows that $S$ satisfy the hypotheses of Lemma~\ref{lemma:dJO}. 

Fix an ample line bundle $H$ on $A$. Applying Corollary~\ref{cor:cohomology-estimate} with the ample line bundle $p_1^*H\otimes p_2^*H$ gives
\begin{align*}
\epsilon_0(L_1,\Phi L_2) & \leq C_{2g} \max_{0\leq m\leq 2g} |\chi(A\times A, K\otimes p_1^*(L_2\otimes H^m)\otimes p_2^*(L_1^\vee \otimes H^{m}))| \\
& = C_{2g} \max_{0\leq m\leq 2g} |\chi(L_1\otimes H^{-m},\Phi(L_2\otimes H^m))|.
\end{align*}
Each term in the last maximum is the absolute value of a fixed linear functional evaluated at $N(\Phi)$, and is therefore bounded by a constant times $\|N(\Phi)\|$.
Summing over the finitely many pairs of summands of $G$ proves the proposition.
\end{proof}

We now prove the main theorem of this section.

\begin{proof}[Proof of Theorem~\ref{thm:3-main-uniform-Hom-bound}]
Proposition~\ref{prop:entropy-upper-bound} and Lemma~\ref{lemma:2-entropy-lower-bound} give
$$
\epsilon_0(G,\Phi G) \asymp_{A,G} \| N(\Phi) \|.
$$
First take $G$ to be a direct sum of line bundles as above.
By (\ref{eqn:kernel-degree-window}), we have
$$
e^{t\sh(\Phi)-2g|t|} \epsilon_0(G,\Phi G)\leq \epsilon_t(G,\Phi G)\leq
e^{t\sh(\Phi)+2g|t|} \epsilon_0(G,\Phi G).
$$
By Lemma~\ref{lemma-2-sh-tau-defect}, replacing $\sh(\Phi)$ by $\tau(\Phi)$ changes each bound by a multiplicative factor between $e^{-2g|t|}$ and $e^{2g|t|}$. Combining these estimates gives
$$
\epsilon_t(G,\Phi G)
\asymp_{A,G,t}e^{t\tau(\Phi)}\|N(\Phi)\|.
$$
Finally, Lemma~\ref{lemma:2-generator-estimate} extends this comparison to every fixed split generator.
\end{proof}

%--------------------%--------------------%--------------------
\section{The sharp bound for polynomial entropy}
\label{sec:poly-upper}

%--------------------%--------------------%--------------------
\subsection{Upper bound for polynomial entropy}
We prove Theorem~\ref{thm:main-poly-bound} in this subsection.

We first realize $\Phi^H$ as a total exterior power, up to scalar and conjugacy, for every $\Phi\in\Aut\Db(A)$ of an abelian variety.

\begin{thm}
\label{thm:GLO-exterior-PhiH}
There is an isomorphism of complex vector spaces
$$
\beta_A\colon H^\bullet(A,\bC)\xrightarrow{\sim}\bigw H^{1,0}(A\times\widehat{A})
$$
such that, for every $\Phi\in\Aut\Db(A)$, there is a constant $c_\Phi\in\bC^\times$ with
$$
\beta_A \Phi^H \beta_A^{-1}=
c_\Phi\bigw
\left((u_\Phi^{-1})^*|_{H^{1,0}(A\times\widehat A)}\right).
$$
Here $u_\Phi\in U(A)\subseteq\Aut(A\times\widehat{A})$ denotes the Orlov element of $\Phi$.
\end{thm}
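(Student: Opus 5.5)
The strategy is to reduce to the spin representation of Golyshev--Lunts--Orlov \cite{GLO}. Put $V=H^1(A,\bC)\oplus H^1(\widehat A,\bC)=H^1(A\times\widehat A,\bC)$. The Clifford algebra of $V$, with the split quadratic form given by the duality pairing, acts on $S=H^\bullet(A,\bC)=\bigw H^1(A,\bC)$: $H^1(A)$ acts by wedge product and $H^1(\widehat A)\cong H^1(A)^\vee$ by contraction. In \cite{GLO} it is shown that for every $\Phi$, the cohomological action $\Phi^H$ normalizes the image of $V$ in $\End(S)$. Conjugation by $\Phi^H$ therefore induces the orthogonal action of $u_\Phi$ on $V$, suitably dualized/inverted. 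Since $S$ is an irreducible Clifford module, Schur's lemma determines $\Phi^H$ up to a scalar from this action. In other words, $\Phi^H$ lies in the Clifford group, and its image is the $\mathrm{Spin}$ or $\mathrm{Pin}$ element lifting $u_\Phi$.

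The key step is to write $S$ in a second way. Hodge theory gives $V=V^{1,0}\oplus V^{0,1}$ with $V^{1,0}=H^{1,0}(A\times\widehat A)$. The duality between $A$ and $\widehat A$ identifies $H^{1,0}(\widehat A)$ with $\overline{H^{0,1}(A)}$, i.e. with $H^{0,1}(A)^\vee$ up to sign conventions. From this I would check that $V^{1,0}$ is a maximal isotropic (Lagrangian) subspace for the quadratic form. This is the same mechanism the paper uses on $W_A$, where the complex structure on $A\times\widehat A$ is $J\oplus J_{\widehat A}$ with $J_{\widehat A}\alpha=-\alpha\circ J$, so that the pairing is $J$-invariant.

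For any Lagrangian $L\subseteq V$, the spinor module is canonically (up to scalar) $\bigw L$, realized as the Clifford module induced from the pure spinor line annihilated by a complementary Lagrangian $L'=V^{0,1}$. Taking $L=V^{1,0}$ produces an isomorphism $\beta_A\colon S\xrightarrow{\sim}\bigw V^{1,0}$ of Clifford modules, unique up to scalar. I would pin down $\beta_A$ concretely: identify the line $\bigwedge^{0}$ with the pure spinor $\bigwedge^{g}H^{0,1}(A)\subseteq H^{0,g}(A)$, or a similar line fixed by the Hodge decomposition.

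Since $u_\Phi$ is an automorphism of the abelian variety $A\times\widehat A$, $(u_\Phi^{-1})^*$ preserves $V^{1,0}$ and $V^{0,1}$. On $\bigw L$, any orthogonal map preserving both $L$ and $L'$ acts through the Clifford group as $c\cdot\bigw(g|_L)$, possibly twisted by a determinant character that is absorbed into $c$. Indeed, $\bigw(g|_L)$ intertwines the Clifford action of $v$ with that of $g v$: wedging on $L$ is clearly equivariant, and contraction by $L'\cong L^\vee$ is equivariant because $g|_{L'}$ is the inverse transpose of $g|_L$. Transport $\Phi^H$ by $\beta_A$. Both $\beta_A\Phi^H\beta_A^{-1}$ and $\bigw((u_\Phi^{-1})^*|_{V^{1,0}})$ intertwine the Clifford action twisted by the same orthogonal map. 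Schur's lemma then shows they differ by $c_\Phi\in\bC^\times$.

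The main obstacle is bookkeeping: getting the correct relation between $\Phi^H$ and $u_\Phi$ in the Clifford normalization (whether it is $u_\Phi$, $u_\Phi^{-1}$, or a transpose, compatible with Orlov's $\widetilde u=u^{-1}$), and verifying that $V^{1,0}$ is isotropic with the correct signs. I would settle both by checking generators: translations and tensoring by $P\in\mathrm{Pic}^0$ act trivially, line bundle twists act by $e^{c_1(L)}$, and the Fourier--Mukai transform acts via Mukai's formula. Orlov's exact sequence shows these generate $U(A)$ up to the kernel, so that kernel contributes only scalars.
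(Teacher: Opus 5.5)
Your proposal is correct and follows essentially the same route as the paper. The shared ingredients are the two Lagrangian splittings of $H^1(A\times\widehat A,\bC)$ (the K\"unneth one, and the Hodge one, which is isotropic because the product complex structure preserves the duality pairing), a Clifford-module isomorphism $\beta_A$, GLO's intertwining relation, and an identification of the twisted intertwiner. You finish with Schur's lemma on the irreducible spinor module, whereas the paper shows directly that $\beta_A\Phi^H\beta_A^{-1}$ sends the vacuum $1$ to $c_\Phi\cdot 1$ and then computes it on monomials. The paper also works on homology, the $Q$-dual of your cohomological picture, where $u_*$ on $H_1$ corresponds to $(u^{-1})^*$ on $H^1$.

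One correction: drop the plan to fix the normalization by checking generators. Translations, $\mathrm{Pic}^0$-twists, line bundle twists and the Poincar\'e Fourier--Mukai transform (which lands in $\Db(\widehat A)$) do not generate $U(A)$ in general. The check is also unnecessary, since GLO's Proposition~4.3.7 gives the exact rule $\Phi^H c(w)(\Phi^H)^{-1}=c(u_*w)$, and this is precisely the $(u_\Phi^{-1})^*$ in the statement.
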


\begin{proof}
Put $V=H_1(A,\bC)$ and identify
$W=H_1(A\times\widehat{A},\bC)\cong V\oplus V^\vee$,
with the symmetric form
$$
Q((v,\alpha),(w,\beta))=\alpha(w)+\beta(v).
$$
Set
$$
E=H^{1,0}(A\times\widehat A)^\vee,
\qquad
F=H^{0,1}(A\times\widehat A)^\vee.
$$
Then $W$ admits two maximal isotropic decompositions:
$$
W=V\oplus V^\vee=E\oplus F.
$$
Indeed, the first is isotropic by the definition of $Q$, while
the second consists of the two eigenspaces of the product complex
structure, which preserves $Q$.

These decompositions give $\Cl(W,Q)$-module structures on
$$
\bigw V^\vee\cong H^\bullet(A,\bC),
\qquad
\bigw E^\vee
=\bigw H^{1,0}(A\times\widehat A).
$$
By \cite{GLO}*{Remark~3.6.1}, there is a Clifford-module isomorphism
$$
\beta_A\colon H^\bullet(A,\bC)
\xrightarrow{\sim}\bigw E^\vee.
$$
Fix such an isomorphism, independently of $\Phi$, and denote the
two Clifford actions by $c$ and $c'$, respectively.

Let $\Phi\in\Aut\Db(A)$ and put $u=(u_\Phi)_*$.
By \cite{GLO}*{Proposition~4.3.7},
$$
\Phi^Hc(w)(\Phi^H)^{-1}=c(uw)
\qquad(w\in W).
$$
Since $\beta_A$ intertwines the Clifford actions, the operator
$$
R_\Phi\coloneqq\beta_A\Phi^H\beta_A^{-1}
$$
satisfies
$$
R_\Phi c'(w)=c'(uw)R_\Phi
\qquad(w\in W).
$$
Moreover, $u_\Phi$ is holomorphic, so $u$ preserves both $E$ and $F$.

The pairing $Q$ identifies $F$ with $E^\vee$ via
$$
\kappa\colon F\xrightarrow{\sim}E^\vee,
\qquad
\kappa(f)(e)\coloneqq Q(e,f).
$$
In the Clifford module $\bigwedge^\bullet E^\vee$, elements of $E$
act by contraction, while $f\in F$ acts by exterior multiplication
by $\kappa(f)$. For every $e\in E$, 
$$
c'(ue)R_\Phi(1)=R_\Phi c'(e)(1)=0.
$$
Since $uE=E$, every contraction annihilates $R_\Phi(1)$.
Therefore, $R_\Phi(1)=c_\Phi1$ for some $c_\Phi\in\bC^\times$.
For $f_1,\ldots,f_r\in F$, we then obtain
\begin{align*}
R_\Phi\bigl(\kappa(f_1)\wedge\cdots\wedge\kappa(f_r)\bigr)
&=R_\Phi c'(f_1)\cdots c'(f_r)(1)\\
&=c'(uf_1)\cdots c'(uf_r)R_\Phi(1)\\
&=c_\Phi\,
\kappa(uf_1)\wedge\cdots\wedge\kappa(uf_r).
\end{align*}

Finally, let $a=u|_E$. Since $u$ preserves $Q$ \cite{GLO}*{Proposition~4.3.2}, we have
$$
\kappa(uf)(e)
=Q(e,uf)
=Q(u^{-1}e,f)
=\bigl((a^{-1})^\vee\kappa(f)\bigr)(e)
$$
for every $e\in E$ and $f\in F$. Thus the action of $u$ on $F$
corresponds under $\kappa$ to $(a^{-1})^\vee$ on $E^\vee$.
The preceding calculation gives
$$
R_\Phi=c_\Phi\bigw (a^{-1})^\vee.
$$
On $E^\vee=H^{1,0}(A\times\widehat A)$, the operator
$(a^{-1})^\vee$ is
$(u_\Phi^{-1})^*|_{H^{1,0}(A\times\widehat A)}$.
Consequently,
$$
\beta_A\Phi^H\beta_A^{-1}
=
c_\Phi\bigw
\left((u_\Phi^{-1})^*|_{H^{1,0}(A\times\widehat A)}\right).
$$
\end{proof}

Next, we prove a linear-algebraic lemma.

\begin{lemma}
\label{lem:linear-algebra-g^2}
Let $T$ be an invertible operator on a complex vector space of dimension $m$, with Jordan block sizes $k_1,\ldots,k_s$. Then
$$
d_{\max} \left(\bigw T\right)=1+\sum_{j=1}^s \left\lfloor \frac{k_j^2}{4}\right\rfloor \leq 1+ \left\lfloor \frac{m^2}{4} \right\rfloor.
$$
\end{lemma}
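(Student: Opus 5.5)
The plan is to reduce everything to unipotent Jordan blocks and compute the answer with the representation theory of $\mathfrak{sl}_2$.

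\textbf{Step 1: splitting over Jordan blocks.} Decompose the space as $V=\bigoplus_{j=1}^s V_j$, where $T|_{V_j}=\lambda_j(I+N_j)$ and $N_j$ is a single nilpotent Jordan block of size $k_j$. The functorial isomorphism $\bigw V\cong\bigotimes_j\bigw V_j$ intertwines $\bigw T$ with $\bigotimes_j\bigw T|_{V_j}$. On $\bigwedge^{p}V_j$ the operator $\bigwedge^p T|_{V_j}$ equals $\lambda_j^p\bigwedge^p(I+N_j)$, a nonzero scalar times a unipotent operator. So $\bigw T$ is a direct sum, over multi-indices $(p_1,\dots,p_s)$, of the operators
$$
\Bigl(\prod_j\lambda_j^{p_j}\Bigr)\bigotimes_j\bigwedge^{p_j}(I+N_j).
$$
Each of these is a nonzero scalar times a unipotent operator. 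A direct sum of Jordan decompositions is a Jordan decomposition, even when several summands share the same eigenvalue. Hence $d_{\max}(\bigw T)$ is the maximum over $(p_1,\dots,p_s)$ of the largest Jordan block of $\bigotimes_j\bigwedge^{p_j}(I+N_j)$.

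\textbf{Step 2: $\mathfrak{sl}_2$ computation.} For a unipotent operator $I+N$, the largest Jordan block size is $1+\max\{r:N^r\neq0\}$. This depends only on the nilpotent part, and the nilpotent part of $\bigwedge^p(I+N)$ (respectively of a tensor product of unipotents) is the operator that the derivation $N$ induces on $\bigwedge^p$ (respectively on the tensor product), up to an invertible unipotent factor.

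By Jacobson--Morozov, realize $V_j$ as the irreducible $\mathfrak{sl}_2$-module $\mathrm{Sym}^{k_j-1}\bC^2$ with $N_j=e$. The maximal Jordan block of $e$ on a finite-dimensional $\mathfrak{sl}_2$-module is $1+(\text{highest } h\text{-weight})$. The top weight of $\bigwedge^p\mathrm{Sym}^{k-1}$ is $(k-1)+(k-3)+\cdots+(k-2p+1)=p(k-p)$, so the largest block of $\bigwedge^p(I+N_j)$ is $1+p(k_j-p)$. Maximizing over $p$ gives $1+\lfloor k_j^2/4\rfloor$.

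For tensor products, highest weights add. So the largest block of $\bigotimes_j\bigwedge^{p_j}(I+N_j)$ is $1+\sum_j p_j(k_j-p_j)$. Maximizing over $(p_1,\dots,p_s)$ yields the claimed equality
$$
d_{\max}\Bigl(\bigw T\Bigr)=1+\sum_{j=1}^s\Bigl\lfloor\frac{k_j^2}{4}\Bigr\rfloor.
$$

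\textbf{Step 3: the inequality.} It remains to show $\sum_j\lfloor k_j^2/4\rfloor\le\lfloor m^2/4\rfloor$ when $\sum_j k_j=m$. Since $\sum_j k_j^2\le(\sum_j k_j)^2$, we get $\sum_j\lfloor k_j^2/4\rfloor\le\sum_j k_j^2/4\le m^2/4$. The left-hand side is an integer, so it is at most $\lfloor m^2/4\rfloor$.

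The main point needing care is Step 2's claim that the Jordan type of a tensor or exterior power of unipotents is read off from the induced nilpotent, i.e.\ that $\log$ of a unipotent commutes with these functors. This is standard: $\log(\bigwedge^p U)$ is the derivation induced by $\log U$, and $\exp$ of a nilpotent has the same Jordan type as that nilpotent.
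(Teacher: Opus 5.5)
Your proof is correct, and its skeleton matches the paper's. You split $V$ into Jordan blocks and decompose $\bigw V$ into the summands $\bigotimes_j\bigwedge^{r_j}V_j$, on each of which $\bigw T$ is a nonzero scalar times a unipotent. You then show that the largest block on each summand has size $1+\sum_j r_j(k_j-r_j)$, and maximize.

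The difference is in how that key count is obtained.
\begin{itemize}
\item The paper writes $T|_{V_j}=\lambda_j\exp(N_j)$ from the start, which sidesteps the $\log$/$\exp$ bookkeeping in your last paragraph.
\item It then argues combinatorially. Each ordered wedge $v_{i_1}\wedge\cdots\wedge v_{i_r}$ gets the height $\sum_a(i_a-(a-1))$. The induced derivation $D_{\br}$ lowers height by one, and heights lie in $[0,M_{\br}]$, so $D_{\br}^{M_{\br}+1}=0$.
\item The paper asserts that $D_{\br}^{M_{\br}}$ is nonzero on the tensor of top wedges as ``straightforward''. It is true because all coefficients are nonnegative and the relevant one counts admissible move sequences.
\end{itemize}
Your $\mathfrak{sl}_2$ argument is the same computation in invariant form: the paper's height is $(\text{weight}+M_{\br})/2$ for the $h$-grading. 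Complete reducibility gives you the upper bound and the non-vanishing in one stroke, so you never need that positivity check. The paper's route is elementary and self-contained; yours is shorter and explains conceptually where $p(k-p)$ comes from. You also prove the final inequality $\sum_j\lfloor k_j^2/4\rfloor\le\lfloor m^2/4\rfloor$, which the paper leaves implicit.

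One wording fix. With $T|_{V_j}=\lambda_j(I+N_j)$, the nilpotent to feed into Jacobson--Morozov is $e=\log(I+N_j)$, not $N_j$ itself. It has the same single-block Jordan type as $N_j$. Then $\bigwedge^p(I+N_j)-I=D_e\cdot u$ with $u$ an invertible unipotent commuting with $D_e$. This $D_e$ is conjugate to, not equal to, the derivation induced by $N_j$. So your phrase ``up to an invertible unipotent factor'' should read ``up to an invertible unipotent factor and conjugation''. Alternatively, replace $I+N_j$ by the conjugate unipotent $\exp(N_j)$ as the paper does. Your closing paragraph already contains the needed facts, so this is not a gap.
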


\begin{proof}
Write $V=\bigoplus_j V_j$, where $T|_{V_j}=\lambda_j \exp(N_j)$ and $N_j$ is nilpotent with one block of size $k_j$.
Consider multi-indices $\br=(r_1,\ldots,r_s)$ where $0\leq r_j\leq k_j$.
The decomposition
$$
\bigw V\cong \bigoplus_\br W_\br, \qquad 
W_\br \coloneqq \bigotimes_{j=1}^s \bigwedge\nolimits^{\! r_j} V_j
$$
is invariant under $\bigw T$.
On $W_\br$ its action is
$$
\left(\prod_j \lambda_j^{r_j}\right)\exp(D_\br), \qquad
D_\br=\sum_j \text{id}\otimes\cdots\otimes A_j\otimes\cdots\otimes\text{id},
$$
where $A_j\in\End(\bigwedge\nolimits^{\! r_j} V_j)$ is defined by
$$
A_j(x_1\wedge\cdots\wedge x_{r_j})=\sum_{a=1}^{r_j}x_1\wedge\cdots\wedge N_j x_a \wedge \cdots x_{r_j}
$$
which satisfies $\bigwedge\nolimits^{\! r_j} e^{N_j} = e^{A_j}$.

For each $V_j$, choose a basis $v_{j,0},\ldots,v_{j,k_j-1}$ with
$N_jv_{j,0}=0$ and $N_j v_{j,i}=v_{j,i-1}$.
Give a tensor of ordered wedges, with indices $i_{j,1}<\cdots<i_{j,r_j}$, height
$$
\sum_{j}\sum_{a=1}^{r_j} (i_{j,a}-(a-1)).
$$
Every nonzero term of $D_\br$ lowers height by one.
Heights range from zero to 
$$
M_\br=\sum_{j=1}^s r_j(k_j-r_j),
$$
so $D_\br^{M_\br+1}=0$.
It is straightforward to see that $D_\br^{M_\br}$ does not vanish on the tensor of top wedges.
The largest Jordan block on $W_\br$ therefore has size $M_\br+1$.
Maximizing over the multi-indices $\br$ proves the claim.
\end{proof}

We now prove the upper bound of categorical polynomial entropy.

\begin{proof}[Proof of Theorem~\ref{thm:main-poly-bound}]
Theorem~\ref{thm:main-entropy} and the fact that the numerical action $N(\Phi)$ is a subquotient of $\Phi^H$ give
$$
h^\pol(\Phi)=s(N(\Phi))\leq d_{\max} (\Phi^H)-1.
$$
By Theorem~\ref{thm:GLO-exterior-PhiH}, the last term is the largest Jordan block size minus one of a total exterior power of an invertible operator on a $2g$-dimensional space, and Lemma~\ref{lem:linear-algebra-g^2} bounds it by $g^2$.
Sharpness will be established in Example~\ref{eg:LOZ-sharp-g2}.
\end{proof}

%--------------------%--------------------%--------------------
\subsection{Polynomial log-volume growth}
Let $X$ be a smooth projective variety of dimension $d$, let $f\in\Aut(X)$, and let $H$ be ample. Define
$$
\Delta_n(f,H)=\sum_{r=0}^{n-1}(f^r)^*H, \qquad
\Plov(f)\coloneqq\limsup_{n\to\infty}\frac{\log \Delta_n(f,H)^d}{\log n}.
$$
This is the intersection-theoretic formulation of the \emph{polynomial log-volume growth} of $f$ \cites{CPR,LOZ}, which is independent of the choice of $H$.
Lin--Oguiso--Zhang proved that $\Plov(f)\leq g^2$ for every automorphism of zero topological entropy on a complex abelian variety of dimension $g$ \cite{LOZ}*{Theorem~1.4}.
The purpose of this subsection is to realize $\Plov(f)$ as the categorical entropy of an autoequivalence, therefore recovering the bound $\Plov(f)\leq g^2$ as a direct consequence of Theorem~\ref{thm:main-poly-bound}.

We first give an estimate needed to compare intersections with categorical complexity.

\begin{lemma}
\label{lem:for-plov-RR-estimate}
Let $X$ be a smooth projective variety of dimension $d$, and let $H$ be an ample divisor class.
There is $s_0>0$ such that, for $s\geq s_0$ and every nef real divisor class $D$,
$$
\frac{(sH+D)^d}{2d!} \leq \int_X e^{sH+D} \td(X) \leq \frac{2(sH+D)^d}{d!}.
$$
\end{lemma}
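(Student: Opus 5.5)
We expand the Hirzebruch--Riemann--Roch integrand by degree and compare every lower-order term with the top-degree term $(sH+D)^d/d!$. Write $\td(X)=1+\sum_{k=1}^d \td_k$ with $\td_k\in H^{2k}(X,\bR)$. Then
$$
\int_X e^{sH+D}\td(X)=\frac{(sH+D)^d}{d!}+\sum_{k=1}^{d}\frac{1}{(d-k)!}\int_X (sH+D)^{d-k}\td_k .
$$
It therefore suffices to find $s_0$ such that, for all $s\geq s_0$ and all nef $D$, each error term satisfies
$$
\left|\frac{1}{(d-k)!}\int_X (sH+D)^{d-k}\td_k\right|\leq \frac{1}{2d}\cdot\frac{(sH+D)^d}{d!}.
$$
Summing over $k$ then gives both inequalities.

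The first step bounds the cycle $\td_k$ by powers of $H$. Since $H$ is ample, there is a constant $c>0$, depending only on $X$ and $H$, such that $cH^k\pm\td_k$ is a pseudoeffective (even positive) class of codimension $k$. Concretely, write $\td_k$ as a real combination of products of Chern classes and use that $cH^k$ dominates any fixed class in the interior of the cone of positive classes.

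Pairing against products of nef divisors preserves nonnegativity. Since $(sH+D)^{d-k}$ is such a product, this gives
$$
\left|\int_X (sH+D)^{d-k}\td_k\right|\leq c\,(sH+D)^{d-k}H^k.
$$
This positivity step is the main obstacle. In codimension $k\geq2$ one must be careful, because a nef class intersected with a merely pseudoeffective cycle is nonnegative, but we need the bound uniformly in $D$. A cleaner route avoids positivity of $\td_k$ itself. Fix a basis of $H^{2k}$ represented by differences of complete intersections of very ample divisors, $\td_k=\sum a_i(Z_i-Z_i')$. Each $Z_i$ is a product of very ample divisors $A_1\cdots A_k$ with $A_j\leq cH$, so by nefness $(sH+D)^{d-k}A_1\cdots A_k\leq c^k(sH+D)^{d-k}H^k$.

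The second step compares $(sH+D)^{d-k}H^k$ with $(sH+D)^d$. Since $D$ is nef, expanding $(sH+D)^d$ with all terms nonnegative gives
$$
(sH+D)^d=(sH+D)^{d-k}(sH+D)^k\geq (sH+D)^{d-k}(sH)^k=s^k(sH+D)^{d-k}H^k,
$$
where the inequality drops the terms involving $D$ in the last $k$ factors. Hence each error term is at most $C s^{-k}(sH+D)^d$ for a constant $C$ independent of $D$.

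Choosing $s_0$ so that $Cs_0^{-1}\leq 1/(2d\cdot d!)$ completes the proof. This also yields $(sH+D)^d>0$, so the ratios make sense.
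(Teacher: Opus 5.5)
Your skeleton is sound. The expansion by Todd degree, the inequality $(sH+D)^d\geq s^k(sH+D)^{d-k}H^k$ for nef $D$, and the final choice of $s_0\geq 1$ are all correct, and they give constants $\tfrac12,\tfrac32$. The gap is the uniform bound $\bigl|\int_X(sH+D)^{d-k}\td_k\bigr|\leq c\,(sH+D)^{d-k}H^k$. You flag this as the main obstacle and then settle it with a false assertion. In general $H^{2k}(X,\bR)$ has no basis of complete-intersection classes, since it contains non-algebraic classes (e.g.\ $H^{2,0}\oplus H^{0,2}\subseteq H^2$). More to the point, $\td_k$ itself need not lie in the span of products of divisor classes. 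Take a hyperk\"ahler fourfold of $\mathrm{K3}^{[2]}$-type with Picard number one and ample class $h$. Then $\td_2=c_2(X)/12$ is not a multiple of $h^2$: the Fujiki relations $\int c_2(X)\alpha^2=30q(\alpha)$ and $\int h^2\alpha^2=q(h)q(\alpha)+2q(h,\alpha)^2$ give incompatible values of $\mu$ in $c_2(X)=\mu h^2$ when tested on $\alpha=h$ and on $\alpha\perp h$. To rescue that route you would need the functional $y\mapsto\int_X y\,\td_k$ on products of $d-k$ divisors to be induced by a combination of products of $k$ divisors. That is a Poincar\'e-duality property of the divisor subring, which you have not established.

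Your first formulation is the right one, but its justification misses the actual input. What is needed is that $H^k$ lies in the interior of the pseudoeffective cone $\overline{\mathrm{Eff}}^k(X)\subseteq N^k(X)_\bR$. Since $\td_k$ is an algebraic class, $cH^k\pm\td_k$ is then pseudoeffective for $c\gg0$. The paper takes this interior property from Fulger--Lehmann. It can also be seen directly: a codimension-$k$ subvariety $Z$ is a component of a proper intersection of $k$ general members of $|\mathcal{I}_Z(mH)|$ for $m\gg0$, so $m^kH^k-[Z]$ is effective, and such classes $[Z]$ span $N^k(X)_\bR$. Your worry about uniformity in $D$ is unfounded. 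For every nef $D$, $(sH+D)^{d-k}$ is a product of nef divisors, and such products are nonnegative on effective cycles by Kleiman, hence on $\overline{\mathrm{Eff}}^k(X)$.

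With this step repaired your proof is complete, and it differs from the paper's only in bookkeeping. The paper groups by powers of $D$ rather than by Todd degree. It compares $\gamma_p(s)=(e^{sH}\td(X))_p$ with $s^pH^p/p!$ in the pseudoeffective order and pairs with $D^{d-p}/(d-p)!$. The binomial theorem then yields $(sH+D)^d/d!$ directly, so your extra inequality $(sH+D)^d\geq s^k(sH+D)^{d-k}H^k$ is not needed.
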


\begin{proof}
The class $H^p$ lies in the interior of the pseudoeffective cone \cite{FulgerLehmann}*{Theorem~1.4(2)}.
Put $\gamma_p(s)= \bigl(e^{sH}\operatorname{td}(X)\bigr)_p$. 
Since $s^{-p}\gamma_p(s) \to \frac{H^p}{p!}$,
for all sufficiently large $s$ one has
$$
\frac{s^pH^p}{2p!} \leq_{\mathrm{eff}} \gamma_p(s) \leq_{\mathrm{eff}} \frac{2s^pH^p}{p!}. 
$$
Choose a common threshold $s_0>0$ for all $0\leq p\leq d$.
Pairing these inequalities with $D^{d-p}/(d-p)!$ and summing over $p$ gives the desired claim.
\end{proof}

\begin{prop}
\label{prop:plov-identification}
Let $X$ be a smooth projective variety, let $f\in\Aut(X)$ have zero entropy, and let $L$ be ample. Then $\Phi=(-\otimes L)\circ \bL f^*$ satisfies
$$
h_0(\Phi)=0, \qquad h^\pol(\Phi)=\Plov(f).
$$
\end{prop}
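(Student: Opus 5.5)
Since $\Phi$ is a standard autoequivalence (a twist by a line bundle composed with a pullback) with no shifts, I will compute $\delta_0(G,\Phi^nG)$ directly, using the split generator $G=\bigoplus_{i=0}^d P^{\otimes i}$ with $P$ very ample. First I compute the iterates. Because $\bL f^*$ and tensoring interact by $f^*(E\otimes L)=f^*E\otimes f^*L$, induction gives
$$
\Phi^n(E)\cong (f^n)^*E\otimes L_n,\qquad L_n\coloneqq\bigotimes_{r=0}^{n-1}(f^r)^*L,
$$
so that $c_1(L_n)=\Delta_n(f,c_1(L))$. Hence
$$
\Hom(P^{i},\Phi^nP^{j}[k])=H^k\bigl(X,(f^n)^*P^{j}\otimes P^{-i}\otimes L_n\bigr).
$$

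The next step is to replace $\epsilon_0$ by an Euler characteristic. The class $D_n\coloneqq c_1(L_n)$ is ample, since it is a sum of pullbacks of an ample class. I will avoid needing Serre vanishing uniformly in $n$ by changing generators. By Lemma~\ref{lemma:2-generator-estimate}, I may use the generators $G=\bigoplus_{i=0}^dP^{-i}$ on the left and $G'=\bigoplus_j P^{j}$ on the right, taking $P=H^{\otimes a}$ with $a$ large. The sheaves then become $(f^n)^*P^j\otimes P^{i}\otimes L_n$, whose classes have the form $sH+D$ with $D=D_n+j(f^n)^*c_1(P)$ nef and $s=ia$. Fujita vanishing gives $m_0$ such that $H^k(X,\mathcal F\otimes H^{m}\otimes N)=0$ for $k>0$, $m\ge m_0$ and $N$ nef. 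After twisting all summands by a fixed large power of $H$ (again harmless by Lemma~\ref{lemma:2-generator-estimate}), every such sheaf has only $H^0$. Therefore $\epsilon_0$ equals a sum of Riemann--Roch integrals $\int_X e^{sH+D}\td(X)$ with $s\ge s_0$. Lemma~\ref{lem:for-plov-RR-estimate} then bounds each term above and below by constant multiples of $(sH+D)^d$, where $s$ ranges over finitely many values.

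It remains to compare $(sH+D_n+j(f^n)^*P)^d$ with $\Delta_n(f,H)^d$ up to constants and polynomial factors in $n$. For the lower bound, I take $j=0$ and use nefness to get $(sH+D_n)^d\ge D_n^d\asymp \Delta_n(f,H)^d$; the last comparison holds because $L$ and $H$ bound each other by constant multiples in the nef order, and intersection numbers of nef classes are monotone. For the upper bound, I need $(f^n)^*P\le C\,\Delta_{n+1}(f,H)$, which holds since $(f^n)^*H$ is one of the summands of $\Delta_{n+1}$. This gives $(sH+D_n+j(f^n)^*P)^d\le C\,\Delta_{n+1}(f,H)^d$. I expect the zero-entropy hypothesis to be what keeps $\Delta_{n+1}^d$ and $\Delta_n^d$ comparable up to polynomial factors: by Gromov--Yomdin, zero entropy means $f^*$ has spectral radius $1$ on $N^1$, so $\|(f^r)^*\|$ grows polynomially, $\Delta_n^d$ grows polynomially, and $\Delta_n^d\ge n^0H^d$. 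Taking logarithms, these bounds show $h_0(\Phi)=\lim \frac1n\log\delta_0=0$. They also show $h^{\pol}$ equals $\limsup\log\Delta_n^d/\log n=\Plov(f)$, since $\log\Delta_{n+1}^d/\log(n+1)$ has the same $\limsup$.

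The main obstacle is the uniform vanishing step: the vanishing threshold must not depend on $n$ even though $D_n$ varies. Fujita's theorem, which gives vanishing after adding any nef class, is the tool that makes the threshold uniform, so I will invoke it there. I expect the remaining steps to be bookkeeping.
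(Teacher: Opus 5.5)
Your proposal is correct and follows essentially the same route as the paper. The shared steps are the iterate formula $\Phi^n(E)\cong (f^n)^*E\otimes L_n$, a choice of left and right generators that turns $\epsilon_0$ into a sum of Euler characteristics, Lemma~\ref{lem:for-plov-RR-estimate}, monotonicity of intersection numbers in the nef order, and Gromov--Yomdin for polynomial growth.

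The differences are minor. The paper gets vanishing uniform in $n$ from Kodaira vanishing, by choosing $P=L^m$ with $P\otimes K_X^{-1}$ ample, rather than from Fujita vanishing. It also picks $b$ with $bH-f^*H$ nef, so that $(f^n)^*H\le b\,\Delta_n(f,H)$. This gives $\epsilon_0\asymp\Delta_n(f,H)^d$ directly, instead of your bounds between $\Delta_n^d$ and $\Delta_{n+1}^d$.
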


\begin{proof}
Put $d=\dim X$, $H=c_1(L)$, and $H_n=(f^n)^*H$.
Choose an integer $m\geq s_0$ from Lemma~\ref{lem:for-plov-RR-estimate} such that $P=L^m$ is very ample and $P\otimes K_X^{-1}$ is ample. Consider the split generators
$$
G_-=\bigoplus_{i=1}^{d+1}P^{-i}, \qquad G_+=\bigoplus_{j=1}^{d+1}P^{j}.
$$
Write
$$
L_n=\bigotimes_{r=0}^{n-1} (f^r)^*L, \qquad
B_{ij,n}=P^i\otimes L_n\otimes (f^n)^*P^j.
$$
Then Kodaira vanishing gives
\begin{align*}
\epsilon_0(G_-,\Phi^n G_+) & = \sum_{i,j=1}^{d+1} \chi(X,B_{ij,n}) 
=\sum_{i,j}\int_X e^{miH+\Delta_n(f,H)+mjH_n}\td(X) \\
& \asymp_{X,L,m} \sum_{i,j=1}^{d+1} (miH+\Delta_n(f,H)+mjH_n)^d,
\end{align*}
where the last comparison follows from Lemma~\ref{lem:for-plov-RR-estimate}.

Choose $b>0$ such that $bH-f^*H$ is nef. Then
$$
H \leq_{\text{nef}} \Delta_n(f,H), \qquad
H_n \leq_{\text{nef}} b\Delta_n(f,H).
$$
It follows that
$$
\Delta_n(f,H) \leq_{\text{nef}}
miH+\Delta_n(f,H)+mjH_n \leq_{\text{nef}}
(mi+1+mjb) \Delta_n(f,H).
$$
Hence
$$
\epsilon_0(G_-,\Phi^n G_+) \asymp_{X,L,m,f} \Delta_n(f,H)^d.
$$
The Gromov--Yomdin theorem implies that $f^*$ on total cohomology has spectral radius one \cites{Gromov,Yomdin}. Thus the right hand side bounds polynomially in $n$, hence $h_0(\Phi)=0$.
Taking logarithms and dividing by $\log n$ proves the polynoimal entropy equality.
\end{proof}

\begin{cor}[\cite{LOZ}*{Theorem~1.4}]
Let $A$ be an abelian variety of dimension $g$, and let $f\in\Aut(A)$ be an automorphism of zero entropy. Then
$$
\text{Plov}(f)\leq g^2.
$$
\end{cor}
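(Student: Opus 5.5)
The plan is to combine Proposition~\ref{prop:plov-identification} with Theorem~\ref{thm:main-poly-bound}. Fix an ample line bundle $L$ on $A$ and form the autoequivalence
$$
\Phi=(-\otimes L)\circ \bL f^*\in\Aut\Db(A).
$$
Since $f$ is an automorphism of zero topological entropy on the smooth projective variety $A$, Proposition~\ref{prop:plov-identification} applies and gives $h_0(\Phi)=0$ together with
$$
h_0^{\pol}(\Phi)=\Plov(f).
$$
Theorem~\ref{thm:main-poly-bound} holds for every autoequivalence of $\Db(A)$, so $h^{\pol}(\Phi)\leq g^2$, and hence $\Plov(f)\leq g^2$.

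Two compatibility points need checking. First, Proposition~\ref{prop:plov-identification} computes the polynomial entropy at $t=0$, while Theorem~\ref{thm:main-poly-bound} concerns $h^{\pol}$. These agree because Theorem~\ref{thm:main-entropy} shows that $h^{\pol}_t(\Phi)=s(N(\Phi))$ does not depend on $t$. Second, $\Plov(f)$ is independent of the ample class, so computing it with $H=c_1(L)$ gives the invariant in the statement. In the proof of Proposition~\ref{prop:plov-identification} the polynomial entropy is computed with the split generators $G_-,G_+$, and $\delta_t$ or $\epsilon_t$ with any pair of fixed split generators yields the same invariant, as recalled in the preliminaries.

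The only real content lies in the earlier results, so I expect no genuine obstacle here. The one point to verify is that Proposition~\ref{prop:plov-identification} gives the equality $h^{\pol}(\Phi)=\Plov(f)$ and not only an inequality. That is what the comparison $\epsilon_0(G_-,\Phi^nG_+)\asymp\Delta_n(f,H)^d$ provides: the two sides agree up to constants independent of $n$, so they have the same $\limsup$ of $\log/\log n$. In fact only the direction $\Plov(f)\leq h^{\pol}(\Phi)$ is needed for the corollary.
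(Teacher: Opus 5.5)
Your proposal is correct and follows the paper's proof exactly: choose an ample $L$, apply Proposition~\ref{prop:plov-identification} to $(-\otimes L)\circ\bL f^*$, and bound the result by Theorem~\ref{thm:main-poly-bound}. Your extra checks are also sound: $h^{\pol}_t$ does not depend on $t$ by Theorem~\ref{thm:main-entropy}, and the result is independent of the choice of generators and ample class.
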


\begin{proof}
Choose an ample line bundle $L$. Apply Proposition~\ref{prop:plov-identification} and the bound in Theorem~\ref{thm:main-poly-bound} to $(-\otimes L)\circ \bL f^*$.
\end{proof}

%--------------------%--------------------%--------------------
\subsection{Pullbacks and sharpness}
In this subsection, we prove that the upper bound of polynomial entropy is sharp.

\begin{prop}
\label{prop:poly-ent-automorphism-upper-g^2/2}
Let $A$ be a complex abelian variety of dimension $g$, and let $f\in\Aut(A)$. Then
$$
h^\pol(\bL f^*)\leq \left\lfloor\frac{g^2}{2}\right\rfloor.
$$
\end{prop}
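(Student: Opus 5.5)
Since $\bL f^*$ is an autoequivalence of $\Db(A)$, Theorem~\ref{thm:main-entropy} gives $h^\pol(\bL f^*)=s(N(\bL f^*))$. I will bound this by the Jordan structure of the cohomological action, as in the proof of Theorem~\ref{thm:main-poly-bound}, but now using the explicit form of $f^*$ on $H^\bullet(A,\bC)$ instead of the Golyshev--Lunts--Orlov spin model. The numerical action is a subquotient of $(\bL f^*)^H=f^*$ acting on $H^\bullet(A,\bQ)$, so
$$
h^\pol(\bL f^*)\leq d_{\max}(f^*|_{H^\bullet(A,\bC)})-1 .
$$
Moreover, $\bL f^*$ preserves Hodge type, so its numerical class lies in the algebraic part $\bigoplus_p H^{p,p}$. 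It therefore suffices to bound the Jordan blocks of $f^*$ on $\bigoplus_p H^{p,p}(A)$.

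Next I use the structure of the cohomology ring. We have $H^\bullet(A,\bC)=\bigw H^1(A,\bC)$ and $H^1=H^{1,0}\oplus H^{0,1}$, so
$$
H^{p,q}(A)=\bigwedge\nolimits^{\!p}H^{1,0}\otimes\bigwedge\nolimits^{\!q}H^{0,1},
$$
and $f^*$ acts as $\bigwedge^p T\otimes\bigwedge^q\overline T$. Here $T=f^*|_{H^{1,0}}$ is an operator on a $g$-dimensional space, and $\overline T$ on $H^{0,1}$ is its complex conjugate. The translation part of $f$ acts trivially on cohomology, so only the group-automorphism part matters. The Jordan block sizes of $\overline T$ equal those of $T$, say $k_1,\ldots,k_s$ with $\sum k_j=g$.

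Now I redo the height argument of Lemma~\ref{lem:linear-algebra-g^2}, restricted to the summands of $\bigw(T\oplus\overline T)$ of bidegree $(p,q)$. Each summand $W_{\mathbf r}\otimes W'_{\mathbf r'}$ has largest Jordan block $1+\sum_j r_j(k_j-r_j)+\sum_j r'_j(k_j-r'_j)$. The Jordan size of a tensor product of unipotent pieces is at most the sum of the nilpotency indices minus one, and that bound is attained. Using $r(k-r)\leq\lfloor k^2/4\rfloor$, this gives
$$
d_{\max}(f^*)\leq 1+2\sum_j\lfloor k_j^2/4\rfloor \leq 1+2\lfloor g^2/4\rfloor\leq 1+\lfloor g^2/2\rfloor .
$$
For the last inequality: if $g$ is even, $2\cdot g^2/4=g^2/2$; if $g$ is odd, $2(g^2-1)/4=(g^2-1)/2=\lfloor g^2/2\rfloor$. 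Combined with the first display, this proves the proposition.

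The main obstacle is making the first reduction rigorous: that $s(N(\bL f^*))$ is controlled by the Jordan blocks of $f^*$ on the relevant invariant subspace. The numerical Grothendieck group embeds via the Mukai vector $\operatorname{ch}(-)\sqrt{\td_A}=\operatorname{ch}(-)$ into $H^\bullet(A,\bQ)$, equivariantly for $f^*$, so $N(\bL f^*)$ is a restriction of $f^*$ and its Jordan blocks are no larger. In fact no Hodge-type restriction is needed at all: the bound $1+2\sum_j\lfloor k_j^2/4\rfloor$ already holds on all of $\bigw(T\oplus\overline T)$. The key point is simply that $f^*$ respects the splitting $H^1=H^{1,0}\oplus H^{0,1}$, with both pieces $g$-dimensional and having the same block sizes. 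This gives $\lfloor g^2/2\rfloor$, in contrast with the $\lfloor (2g)^2/4\rfloor=g^2$ that Lemma~\ref{lem:linear-algebra-g^2} gives for a general operator on a $2g$-dimensional space.
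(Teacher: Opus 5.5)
Your proposal is correct and is essentially the paper's proof: Theorem~\ref{thm:main-entropy} gives $h^{\pol}(\bL f^*)=s(N(\bL f^*))$, the numerical action is a subquotient of $f^*=\bigw(T\oplus\overline T)$ on $H^\bullet(A,\bC)$, and Lemma~\ref{lem:linear-algebra-g^2} applied to the block sizes $k_1,k_1,\ldots,k_s,k_s$ gives $2\sum_j\lfloor k_j^2/4\rfloor\leq\lfloor g^2/2\rfloor$. As you note yourself, the detour through $\bigoplus_p H^{p,p}$ is unnecessary; also, you only need $N(\bL f^*)$ to be a subquotient of $f^*$, not a restriction, since Jordan blocks cannot grow on invariant subquotients.
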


\begin{proof}
Let $T=f^*|_{H^{1,0}(A)}$ have Jordan-block sizes $k_1,\ldots,k_s$.
Under the canonical identification $H^\bullet(A,\bC)\cong\bigw H^1(A,\bC)$, the cohomological action $(\bL f^*)^H$ is identified with  $\bigw(T\oplus\overline{T})$.
The Jordan-block sizes of $T\oplus\overline{T}$ are $k_1,k_1,\ldots,k_s,k_s$.
Lemma~\ref{lem:linear-algebra-g^2} therefore gives
$$
d_{\max}((\bL f^*)^H)-1 = 2 \sum_j \left\lfloor \frac{k_j^2}{4} \right\rfloor
\leq \left\lfloor\frac{g^2}{2}\right\rfloor.
$$
Theorem~\ref{thm:main-entropy} and the fact that $N(\Phi)$ is a subquotient of $\Phi^H$ give
$$
h^\pol(\bL f^*) = s(N(\bL f^*))
\leq d_{\max}((\bL f^*)^H)-1,
$$
which proves the assertion.
\end{proof}

\begin{eg}
\label{eg:LOZ-sharp-g2}
Let $E$ be an elliptic curve and let
$$
f\colon E^g\to E^g, \qquad 
(x_1,\ldots,x_g) \mapsto (x_1,x_2+x_1,\ldots,x_g+x_{g-1}).
$$
Its action on $H^{1,0}(E^g)$ is unipotent with one Jordan block of size $g$.
By \cite{FanFuOuchi}*{Corollary~5.9} and Lemma~\ref{lem:linear-algebra-g^2},
$$
h^\pol(\bL f^*)=2\left\lfloor\frac{g^2}{4}\right\rfloor=\left\lfloor\frac{g^2}{2}\right\rfloor.
$$
Thus Proposition~\ref{prop:poly-ent-automorphism-upper-g^2/2} is sharp in every dimension.
For the same automorphism, \cite{LOZ}*{Example~6.4} gives $\Plov(f)=g^2$. Proposition~\ref{prop:plov-identification} then yields
$$
h^\pol((-\otimes L)\circ \bL f^*)=g^2
$$
for every ample line bundle $L$ on $E^g$.
This proves the sharpness in Theorem~\ref{thm:main-poly-bound}.
\end{eg}

%--------------------%--------------------%--------------------
\section{Universality of shifting numbers}
\label{sec:universal-shifting}
We prove Theorem~\ref{thm:main-universal-shifting} by comparing two quasimorphisms: the coherent-kernel shift and a logarithm of the symplectic determinant.

%--------------------%--------------------%--------------------
\subsection{The logarithm model and symplectic translation}
Let $(W,\Omega)$ have real dimension $2n>0$, with the conventions of Section~\ref{subsec:symplectic-translation}.
The determinant factor is nonvanishing on $\cH(W)$ and satisfies
$$
j_{\det}(M_1M_2,Z)=j_{\det}(M_1,M_2\cdot Z)j_{\det}(M_2,Z).
$$
Following \cite{Polishchuk14}*{\S2.3}, define
$$
\cE_{\det}(W)=\{(M,F)\in\Sp(W,\Omega)\times\cO(\cH(W)) \ | \ 
 e^{-2\pi iF(Z)}=j_{\det}(M,Z)\},
$$
with multiplication
\begin{equation}\label{eq:determinant-extension-law}
(M_1,F_1)(M_2,F_2)=(M_1M_2,F_1(M_2\cdot Z)+F_2(Z)).
\end{equation}
Then the projection $\cE_{\det}(W)\to\Sp(W,\Omega)$ is the universal covering homomorphism, with covering kernel generated by central generator $(I,1)$.

\begin{lemma}
\label{lem:qz0defectn/2}
For any $Z_0\in\cH(W)$, the function
$$
q_{Z_0}(M,F)=\text{Re} F(Z_0)
$$
is a continuous quasimorphism of defect at most $n/2$, and its homogenization is $\widetilde{\Rot}$ under the identification between $\cE_{\det}(W)$ and $\widetilde{\Sp}(W,\Omega)$.
\end{lemma}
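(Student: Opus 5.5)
The plan is to verify three things in turn: continuity, the identification of $q_{Z_0}$ with the function of the same name from Section~\ref{subsec:symplectic-translation}, and the defect bound. The defect bound is the only substantive step.

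\emph{Continuity and identification.} The identification $\cE_{\det}(W)\cong\widetilde\Sp(W,\Omega)$ is by path lifting. Given a path $(M_t)$ from $I$, there is a unique continuous family $F_t\in\cO(\cH(W))$ with $F_0=0$ and $e^{-2\pi iF_t}=j_{\det}(M_t,\cdot)$. The endpoint $(M_1,F_1)$ depends only on the homotopy class of the path, and the multiplication law \eqref{eq:determinant-extension-law} is the one induced by the cocycle identity for $j_{\det}$. Evaluating at $Z_0$, the function $t\mapsto -2\pi\,\real F_t(Z_0)$ is a continuous argument of $j_{\det}(M_t,Z_0)$ that vanishes at $t=0$. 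Hence $\real F_1(Z_0)=-\theta(1)/(2\pi)$, so our $q_{Z_0}$ coincides with the $q_{Z_0}$ of Section~\ref{subsec:symplectic-translation}. Continuity is then immediate, because $F$ varies continuously (locally uniformly) with the point of the covering group. The homogenization statement follows from the formula $\widetilde{\Rot}=\lim_N q_{Z_0}(\cdot^N)/N$ recalled there, once we know $q_{Z_0}$ is a quasimorphism.

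\emph{Defect.} By \eqref{eq:determinant-extension-law},
$$
q_{Z_0}\bigl((M_1,F_1)(M_2,F_2)\bigr)-q_{Z_0}(M_1,F_1)-q_{Z_0}(M_2,F_2)=\real F_1(M_2\cdot Z_0)-\real F_1(Z_0).
$$
This equals $-\frac{1}{2\pi}$ times the variation of a continuous argument of $Z\mapsto j_{\det}(M_1,Z)=\det(A+BZ)$ along any path in $\cH(W)$ from $Z_0$ to $M_2\cdot Z_0$. It therefore suffices to show that, for a fixed $M\in\Sp(W,\Omega)$, any continuous branch of $\arg\det(A+BZ)$ on the contractible domain $\cH(W)$ has oscillation strictly less than $n\pi$. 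That gives defect at most $n/2$.

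I would first try to prove this directly. The key claim is that for $Z\in\cH(W)$ every eigenvalue $\lambda$ of $A+BZ$ satisfies $\lambda\notin\bR_{\le 0}$, and moreover lies in an open half-plane $\{\real(e^{-i\phi}\lambda)>0\}$ whose direction $\phi$ is independent of $Z$. If so, each eigenvalue contributes an argument confined to an interval of length $\pi$. Since $\cH(W)$ is convex and the eigenvalues move continuously, the total argument of the determinant is confined to an open interval of length $n\pi$, as required.

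The mechanism I expect to use is the one that works cleanly for $\det(Z+iI)$. If $Y$ is a matrix with $\operatorname{Im}Y>0$ and $Yx=\lambda x$, then $\operatorname{Im}\lambda\,\|x\|^2=x^*(\operatorname{Im}Y)x>0$. To reach a uniform half-plane for general $M$, I would pass to the Siegel disk model via the Cayley transform. There the automorphy factor takes the form $\det P\cdot\det(I+P^{-1}QW)$ with $\|P^{-1}Q\|<1$ and $\|W\|<1$, so every eigenvalue of $I+P^{-1}QW$ lies in the disk $|\lambda-1|<1$ and has argument in $(-\pi/2,\pi/2)$. Transporting back, one gets $j_{\det}(M,Z)=\det(\text{const})\cdot\det(I+X(Z))\cdot c(M\cdot Z)/c(Z)$, where $c(Z)=\det(Z+iI)$ up to constants. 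Here $\arg c$ is confined to $(0,n\pi)$ by the eigenvalue argument above.

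\emph{Main obstacle.} The step I expect to be hardest is controlling these Cayley-transform correction terms so that they do not inflate the constant beyond $n/2$. Crude bookkeeping of the form "disk contribution $<n\pi$, plus two $c$-terms" yields only some bound of order $O(n)$. That already suffices for the homogenization claim, since any finite defect lets the limit formula identify the homogenization with $\widetilde{\Rot}$. To obtain the sharp constant $n/2$, I would first try to choose the Cayley transform centered at $Z_0$, so that $W(Z_0)=0$. Then the variation of $\arg\det(I+X(Z))$ between $Z_0$ and $M_2\cdot Z_0$ is measured from the value $\arg\det I=0$, and I would aim to show that the $c$-terms cancel exactly in the difference $\real F_1(M_2\cdot Z_0)-\real F_1(Z_0)$. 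If this exact cancellation fails, I would fall back on the direct eigenvalue half-plane argument for $A+BZ$ described above.
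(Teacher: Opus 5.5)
Your reduction of the defect to the oscillation of a continuous branch of $\arg\det(A+BZ)$ on $\cH(W)$ matches the paper's. Your identification of $q_{Z_0}$ with the path-defined function of Section~\ref{subsec:symplectic-translation} is also fine; the paper instead uses uniqueness of the normalized continuous homogeneous quasimorphism after checking the value $1$ on $(I,1)$, and either route works.

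\textbf{The gap.} It lies in the only substantive step, and your key claim there is false. That claim is that the eigenvalues of $A+BZ$ avoid $\bR_{\le0}$ and lie in an open half-plane independent of $Z$. In dual bases, $M=\begin{pmatrix}0&B\\-B^{-T}&0\end{pmatrix}$ is symplectic for every invertible $B$. Take $n=2$, $B=\begin{pmatrix}0&-1\\1&0\end{pmatrix}$ and $Z=iI$. Then $A+BZ=iB$ has eigenvalues $\pm1$, which lie in no open half-plane through the origin. So your fallback collapses.

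The Cayley-transform route, by your own account, only gives a bound of order $n$ unless the correction terms cancel exactly, and in general they do not. If $M_1$ fixes $Z_0$, then $Q=0$ in the disk model centered at $Z_0$, so $\det(I+P^{-1}QW)\equiv1$. The entire defect $\real F_1(M_2\cdot Z_0)-\real F_1(Z_0)$, which is typically nonzero, is then carried by the correction terms. As written, the proposal establishes some finite defect. That is enough for the homogenization statement but not for the constant $n/2$ asserted in the lemma.

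\textbf{The missing idea.} Factor out $B$ before looking at eigenvalues. When $B$ is invertible, the symplectic relation $AB^T=BA^T$ makes $S=B^{-1}A$ real symmetric. Hence $\det(A+BZ)=\det B\cdot\det(S+Z)$, where $\det B$ is a nonzero real constant independent of $Z$. The matrix $S+Z$ is complex symmetric with $\operatorname{Im}(S+Z)=\operatorname{Im}Z>0$. Your own mechanism, $\operatorname{Im}\lambda\,\|x\|^2=x^*(\operatorname{Im}Z)x>0$, then puts all its eigenvalues in the open upper half-plane. Summing arguments in $(0,\pi)$ gives a continuous branch of $\arg\det(S+Z)$ with values in $(0,n\pi)$, so $|\real F(Z_1)-\real F(Z_0)|<n/2$. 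The paper writes this as $\det B\det Y\det(R+iI)$ with $R=Y^{-1/2}(S+X)Y^{-1/2}$ real symmetric. The case of singular $B$ follows by density of elements with invertible $B$ and continuity of the lifted logarithm $F$.
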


\begin{proof}
We first claim that for every $\left(M=
\begin{pmatrix}
    A&B\\C&D
\end{pmatrix}
,F\right)\in\cE_{\det}(W)$ and $Z_0,Z_1\in\cH(W)$,
\begin{equation}\label{eq:determinant-sector}
|\real F(Z_1)-\real F(Z_0)|\leq n/2.
\end{equation}
When $B$ is invertible, $S=B^{-1}A$ is real symmetric. 
For $Z=X+iY$, $Y>0$, write
$$
j_{\det}(M,Z)=\det B\det Y\det(R+iI),
\qquad 
R=Y^{-1/2}(S+X)Y^{-1/2}.
$$
If $\lambda_1,\ldots,\lambda_n$ are the real eigenvalues of $R$, then an argument of this determinant is a fixed argument of $\det B$ plus $\sum_j\arg_{(0,\pi)}(\lambda_j+i)$. 
Its values lie in an interval of length $n\pi$, proving (\ref{eq:determinant-sector}). 
Matrices with invertible $B$ are dense, so continuity of logarithms gives the general case.

By (\ref{eq:determinant-extension-law}), the defect of $q_{Z_0}$ is
$$
\text{Re} F_1(M_2\cdot Z_0)-\text{Re} F_1(Z_0),
$$
which is bounded by $n/2$. 
Its homogenization is independent of $Z_0$ by (\ref{eq:determinant-sector}). It is continuous, since
$$
\left|q_{Z_0}^{h}(\xi)-\frac{q_{Z_0}(\xi^N)}N\right|\leq\frac{n}{2N}.
$$
Finally, it takes value $1$ on $(I,1)$. 
The uniqueness of the normalized continuous homogeneous quasimorphism \cite{BIW10}*{Corollary~7.10} identifies it with $\widetilde{\Rot}$.
\end{proof}

%--------------------%--------------------%--------------------
\subsection{The degree-determinant compatibility}
\label{subsec:5.2}
Put $V_A=H_1(A,\bR)$ and $W_A=V_A\oplus V_A^\vee$. 
Let $J_A=J\oplus J_{\widehat A}$, and define
$$
Q_A((v,\alpha),(w,\beta))=\alpha(w)+\beta(v),
\qquad 
\Omega_A(x,y)=-Q_A(J_Ax,y).
$$
Since $J_A$ is $Q_A$-orthogonal and $J_A^2=-I$, the form $\Omega_A$ is alternating and nondegenerate. 
The homological action of $U(A)_\bR$ preserves $Q_A$ and commutes with $J_A$ \cite{Orlov}*{Proposition~2.21}. 
It gives a faithful representation
$$
r_A\colon U(A)_\bR\to\Sp(W_A,\Omega_A).
$$

Consider the convex domain $D_A=\NS(A)_\bR+i\Amp(A)$. 
We also write $\omega$ for the corresponding complex symmetric homomorphism $A\to\widehat A$. 
Its homological realization satisfies $\omega_*(v)=E_\omega(v,-)$.
The action of $u\in U(A)_\bR$ on $\omega\in D_A$ is \cite{Polishchuk14}*{Section~1.4}
$$
u\cdot\omega=(c+d\omega)(a+b\omega)^{-1}, \qquad
u=\begin{pmatrix}a&b\\c&d\end{pmatrix}\in U(A)_\bR.
$$

\begin{prop}
\label{prop:degree-determinant}
There is a holomorphic equivariant embedding $\iota_A\colon D_A\hookrightarrow\cH(W_A)$ such that
$$
j_{\det}(r_A(u),\iota_A(\omega))=\deg(a+b\omega),
\qquad u=\begin{pmatrix}a&b\\c&d\end{pmatrix}\in U(A)_\bR.
$$
\end{prop}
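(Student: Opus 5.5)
Our approach is to choose a Lagrangian splitting of $(W_A,\Omega_A)$ adapted to the block form of $u$, namely $W_A=X\oplus X^\vee$ with $X=V_A$ and $X^\vee=V_A^\vee$ (or a small modification of it), and then define $\iota_A(\omega)$ explicitly as a symmetric map $X_\bC\to X_\bC^\vee$. For $\omega=\beta+i\eta\in D_A$ we have the real homological realization $\omega_*$. The natural candidate for $\iota_A(\omega)$ is obtained from the graph
$$
\Lambda_\omega=\{(v,\omega_*v)\}\subseteq W_{A,\bC},
$$
suitably intersected with the $J_A$-eigenspaces. Concretely, $\omega$ is a complex symmetric homomorphism, so its realization on $H_1$ is a $\bC$-linear map from $V_A$ to $V_A^\vee$ relative to the complex structures. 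We would write the $+i$ eigenspace of $J_A$ and express the $u$-invariant data in those coordinates.

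Step 1: fix $\Omega_A$-compatible dual coordinates. Then check that $r_A(u)$ acts on the splitting with blocks $A=a_*,\;B=b_*,\;C=c_*,\;D=d_*$, up to the sign conventions forced by $\Omega_A(x,y)=-Q_A(J_Ax,y)$. The $J_A$ twist means the symplectic splitting is not literally $V\oplus V^\vee$. I would instead take $X=V_A$ and identify $X^\vee$ with $V_A^\vee$ via $J_{\widehat A}$, and absorb the resulting factor into the definition of $\iota_A$. Step 2: define
$$
\iota_A(\omega)=\text{(the $\bC$-linear extension of $\omega_*$, composed with the $J$-identification)}.
$$
Check that this is symmetric, which follows from $\omega$ being symmetric, i.e.\ $E_\omega$ alternating combined with $J$. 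Check also that its imaginary part is positive definite: it equals $E_\eta(J\cdot,\cdot)=\real H_\eta$, which is positive since $\eta$ is ample. Holomorphy and injectivity are then clear, because the map is $\bR$-linear in $(\beta,\eta)$ plus complex-linear in $\omega$.

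Step 3 is equivariance: $r_A(u)\cdot\iota_A(\omega)=(C+D\iota)(A+B\iota)^{-1}$ should equal $\iota_A((c+d\omega)(a+b\omega)^{-1})$. This holds because homological realization is a ring homomorphism and is compatible with the chosen identifications. Step 4 is the determinant: $j_{\det}=\det_\bC(a_*+b_*\iota_A(\omega))$ on $X_\bC$, which is of complex dimension $2g$. Here $a+b\omega$ is a complex endomorphism of $A$ (in $\End(A)_\bC$), and its realization on $H_1(A,\bC)=H^{-1,0}\oplus H^{0,-1}$ must be compared with $\deg$. We use $\deg(x)=\det(x_*|_{H_1(A,\bR)})$, extended polynomially to $\End(A)_\bC$, which is exactly $\det_\bC$ of the complexified rational action on $H_1(A,\bQ)\otimes\bC$. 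So we need $a_*+b_*\iota_A(\omega)$ to coincide with the $\bC$-linear extension of $(a+b\omega)_*$.

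The main obstacle is this last identification, together with signs. The point is that $\omega$ is complexified formally (as $\beta\otimes1+\eta\otimes i$ in $\Hom(A,\widehat A)\otimes\bC$), not via the complex structure $J$, so $\iota_A(\omega)$ must be $\beta_*+i\eta_*$ on $V_{A,\bC}$ exactly. The role of $J$ then enters only through the positivity of $\operatorname{Im}$ and through the form $\Omega_A$. I would first try the splitting $X=V_A$, $X^\vee=V_A^\vee$ and see whether this is Lagrangian for $\Omega_A$. It is not, since it is Lagrangian for $Q_A$ rather than $\Omega_A$. Then $V_A$ and $J_A$-rotated $V_A^\vee$ give a Lagrangian splitting, which conjugates $B$ to $J$-twisted $b$. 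To fix this I would check whether $\eta_*$ composed with $J$ versus $\eta_*$ alone yields a symmetric positive imaginary part. If the sign conventions disagree, replace $\omega$ by $\bar\omega$ or $-\omega^{-1}$ and verify positivity and the determinant identity on the dense big cell $U(A)^0_\bR$ using the formula from Lemma~\ref{lem:qz0defectn/2}.
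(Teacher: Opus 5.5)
Your outline follows the paper's route: an explicit Siegel point built from $\omega_*$ and $J$, symmetry from the $J$-invariance of the alternating form $E_\omega$, and positivity from $E_\eta(J\cdot,\cdot)=\real H_\eta$. Your Step~2 checks are fine. The decisive step, equivariance together with the determinant identity, is left open, however, and the bookkeeping you do commit to is wrong.

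First, $V_A\oplus V_A^\vee$ \emph{is} $\Omega_A$-Lagrangian, because $J_A=J\oplus J_{\widehat A}$ preserves both $Q_A$-isotropic summands. Only the pairing between the summands is nonstandard: $\Omega_A((v,\alpha),(w,\beta))=\alpha(Jw)-\beta(Jv)$. Pass to standard coordinates, using either the paper's $(v,\alpha)\mapsto(-Jv,\alpha)$ or your $(v,\alpha)\mapsto(v,J_{\widehat A}\alpha)$. Both give the same block matrix $r_A(u)=\begin{pmatrix}a_*&-Jb_*\\c_*J&d_*\end{pmatrix}$ and the same Siegel point $Z_\omega=\omega_*J=J_{\widehat A}\omega_*$. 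This is just your graph $\{(v,\omega_*v)\}$ read in these coordinates.

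Your later claim that $\iota_A(\omega)$ must be $\beta_*+i\eta_*$ exactly is false, since that map is alternating rather than symmetric. Your Step~4 also uses the untwisted block $B=b_*$, which together with $Z=\omega_*J$ gives the wrong answer. On an elliptic curve, take $u=\begin{pmatrix}1&\lambda^{-1}\\0&1\end{pmatrix}\in U(A)_\bR$ and $\omega=i\lambda$. Your formula gives $\det(I+iJ)=0$, whereas $\deg((1+i)1_A)=(1+i)^2$.

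The missing idea is that the two $J$-twists cancel by holomorphy. Since $a$ and $b\omega$ are (complexified) endomorphisms of $A$, both $a_*$ and $b_*\omega_*$ commute with $J$. Hence
$$
A+BZ_\omega=a_*-Jb_*\omega_*J=a_*+b_*\omega_*,\qquad C+DZ_\omega=(c_*+d_*\omega_*)J.
$$
This one computation gives both conclusions at once:
\begin{itemize}
\item $r_A(u)\cdot Z_\omega=\bigl((c+d\omega)(a+b\omega)^{-1}\bigr)_*J=Z_{u\cdot\omega}$;
\item $j_{\det}(r_A(u),Z_\omega)=\det(a_*+b_*\omega_*)=\deg(a+b\omega)$, where the last step is the polynomial extension of $\deg f=\det(f_*|_{H_1(A,\bQ)})$.
\end{itemize}
This is an algebraic identity valid on all of $U(A)_\bR$, so no big-cell or density argument is needed. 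Your proposed fallbacks would not rescue the argument. Replacing $\omega$ by $\bar\omega$ destroys holomorphy. Replacing $\omega$ by $-\omega^{-1}$ changes the automorphy factor away from $\deg(a+b\omega)$.
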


\begin{proof}
Use the symplectic coordinates $(v,\alpha)\mapsto(-Jv,\alpha)$, in which $\Omega_A$ becomes the standard form, and set
$$
\iota_A(\omega)=Z_\omega=\omega_*J.
$$
Then
$$
Z_\omega(v)(w)=E_\omega(Jv,w)=E_\omega(Jw,v)=Z_\omega(w)(v).
$$
For $\omega=x+iH$ with $H$ ample, 
$$
(\text{Im} Z_\omega)(v)(v)=E_H(Jv,v)=H_H(v,v)>0.
$$
Thus $Z_\omega$ lies in the Siegel space. 
This gives the holomorphic embedding $\iota_A$.

In these coordinates, the matrix of $r_A(u)$ is
$$
\begin{pmatrix}a_*&-Jb_*\\c_*J&d_*\end{pmatrix}.
$$
Since $a_*$ and $b_*\omega_*$ commute with $J$, we obtain
\begin{align*}
r_A(u)\cdot Z_\omega
&=(c_*+d_*\omega_*)J
  (a_*-Jb_*\omega_*J)^{-1}\\
&=(c_*+d_*\omega_*)
  (a_*+b_*\omega_*)^{-1}J\\
&=\bigl((c+d\omega)(a+b\omega)^{-1}\bigr)_*J\\
&=Z_{u\cdot\omega}.
\end{align*}
This proves equivariance. Moreover,
$$
j_{\det}(r_A(u),Z_\omega)
=\det(a_*+b_*\omega_*)
=\deg(a+b\omega),
$$
where the determinant is taken on $V_{A,\bC}$.
The last equality follows by polynomial extension from the
identity
$\deg(f)=\det\bigl(f_*|_{H_1(A,\bQ)}\bigr)$
for isogenies $f\colon A\to A$.
\end{proof}

Write $\Delta_A(u)(\omega)=\deg(a+b\omega)$. 
Polishchuk's degree extension is
$$
U_A^\Delta=\{(u,f)\in U(A)_\bR\times\cO(D_A) \ |\  e^{-2\pi if}=\Delta_A(u)\},
$$
with the same logarithmic multiplication as (\ref{eq:determinant-extension-law}); see \cite{Polishchuk14}*{Section~2.3}. 
Proposition~\ref{prop:degree-determinant} identifies it with the pullback of the determinant cover:
\begin{equation}\label{eq:degree-pullback}
U(A)_\bR\times_{\Sp(W_A,\Omega_A)}\cE_{\det}(W_A)
\xrightarrow{\cong}U_A^\Delta,
\quad (u,(r_A(u),F))\longmapsto(u,F\circ\iota_A).
\end{equation}

%--------------------%--------------------%--------------------
\subsection{Universality of shifting numbers}
Let $B_A=U(A)\cap U(A)_\bQ^0$. 
We use the following integral form of Polishchuk's result.

\begin{prop}[\cite{Polishchuk14}*{Theorem~2.3.2}]
\label{prop:normalized-degree-comparison}
There is a unique isomorphism of central extensions
$$
I_A\colon\widetilde U(A)\xrightarrow{\sim}U_A^\Delta|_{U(A)}
$$
covering the identity on $U(A)$, sending the class of the shift functor $[m]$ to $(1,m)$ for every $m\in\bZ$, and satisfies
$$
I_A(\sigma_A(u))=(u,f_u)\qquad(u\in B_A),
$$
where $f_u$ is the logarithm normalized by
\begin{equation}\label{eq:big-log-normalization}
\lim_{t\to+\infty}\real f_u(itH)=-g/2
\end{equation}
for every ample class $H$.
\end{prop}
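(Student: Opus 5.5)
The plan is to build $I_A$ from a cocycle comparison on the big subset $B_A$, then extend it to all of $U(A)$ with Lemma~\ref{lem:big-subset-extension}.

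\textbf{Step 1: the two cocycles.} For $u\in B_A$, the block $b$ is invertible. So
$$
\Delta_A(u)(\omega)=\deg(b)\,\deg(b^{-1}a+\omega),
$$
and $b^{-1}a$ is a rational symmetric homomorphism. On the simply connected domain $D_A$ this function has holomorphic logarithms, and they differ by integers. I would first check that exactly one of them satisfies the normalization \eqref{eq:big-log-normalization}, independently of the ample class $H$. Along $\omega=itH$ with $t\to+\infty$, the argument of $\deg(b^{-1}a+itH)$ tends to $g\pi/2$. The sign of $\deg b$ only shifts the argument by a multiple of $\pi$, and this is absorbed by requiring $\real f_u(itH)\to-g/2$ exactly. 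Independence of $H$ follows because $\Amp(A)$ is connected and the limit is integer-quantized.

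With these logarithms, set
$$
c'(u_1,u_2)=f_{u_1}(u_2\cdot\omega)+f_{u_2}(\omega)-f_{u_1u_2}(\omega)
\qquad\text{for } u_1,u_2,u_1u_2\in B_A .
$$
This is a locally constant, hence constant, integer on $D_A$. It is the cocycle of the partial section $u\mapsto(u,f_u)$ of $U_A^\Delta$.

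\textbf{Step 2: the key identity.} The main step is to show that
$$
c'(u_1,u_2)=p_A(b_1^{-1}b_{12}b_2^{-1})-g,
$$
which is Orlov's formula \eqref{eqn:Orlov-cocycle-formula} for $c_A$.
\begin{enumerate}[label=(\roman*)]
\item Since $c'$ is constant, I would evaluate it asymptotically at a convenient point, for instance $\omega=itH$ with $t\to+\infty$.
\item I would rewrite $u_2\cdot\omega$ and the three determinants using the identity
$$
a_{12}+b_{12}\omega=(a_1+b_1(u_2\cdot\omega))(a_2+b_2\omega).
$$
\item I would express $b_1^{-1}a_1+u_2\cdot\omega$ in terms of $q=b_1^{-1}b_{12}b_2^{-1}$.
\item The arguments of the determinants then become sums of angles $\arg_{(0,\pi)}(\lambda_j+i)$, as in the proof of Lemma~\ref{lem:qz0defectn/2}.
\item In the limit these angles tend to $0$ or $\pi$ according to the signs of the eigenvalues of the Hermitian form $H_q$. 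Counting these signs produces the positive index $p_A(q)$.
\end{enumerate}
This is essentially the computation in \cite{Polishchuk14}*{Theorem~2.3.2}. I expect the bookkeeping of signs and of the $-g/2$ normalization, so that the constant comes out as exactly $p_A-g$, to be the main obstacle. If it proves delicate, I would instead quote Polishchuk's rational-coefficient statement on $U(A)^0_\bQ$ and restrict it to $B_A$.

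\textbf{Step 3: extension and uniqueness.} By the remark on big subsets (item (iv) with $\Gamma=U(A)$), $B_A$ is big in $U(A)$. Lemma~\ref{lem:big-subset-extension} applied to $c_A$ and $c'$ then gives a unique isomorphism $\widetilde U(A)\cong U_A^\Delta|_{U(A)}$. It covers the identity on $U(A)$, is the identity on the central $\bZ$, and sends $\sigma_A(u)=(u,0)$ to $(u,f_u)$ for $u\in B_A$.

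The shift functor $[m]$ has Orlov element $1$ and kernel shift $m$, so it is the central element $(1,m)$ in $\widetilde U(A)$. It is therefore sent to $(1,m)$ in $U_A^\Delta$, because the isomorphism is the identity on the centre.

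For uniqueness, suppose $I$ and $I'$ both satisfy these conditions. They differ by a homomorphism $U(A)\to\bZ$ that vanishes on $B_A$. Since $U(A)=B_AB_A$ (as noted after Lemma~\ref{lem:big-subset-extension}), this homomorphism vanishes identically, so $I=I'$.
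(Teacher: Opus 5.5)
Your strategy is correct and is essentially Polishchuk's own proof of his Theorem~2.3.2. The paper gives no argument of its own: it cites that theorem and restricts it to the integral group $U(A)$, which is your fallback. Your main route instead derives the comparison directly from Orlov's formula \eqref{eqn:Orlov-cocycle-formula} and Lemma~\ref{lem:big-subset-extension}. Step~3 is fine as written: bigness of $B_A$ comes from item (iv) of the remark, and uniqueness from $U(A)=B_AB_A$.

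Step~1, however, contains a computational error that contradicts the normalization you are checking. The argument of $\deg(b^{-1}a+itH)$ tends to $g\pi$, not $g\pi/2$. The reason is that $\deg$ is homogeneous of degree $2g$; equivalently, the proof of Lemma~\ref{lem:qz0defectn/2} has $2g$ angles $\arg_{(0,\pi)}(\lambda_j+i)$. Concretely,
$$
\deg(a+itbH)=(-1)^g\,t^{2g}\deg(bH)\bigl(1+O(t^{-1})\bigr).
$$
With your value $g\pi/2$, the limit of $\real f_u$ would be $-g/4$ modulo $\bZ$. Then no branch would satisfy \eqref{eq:big-log-normalization} unless $4\mid g$.

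The sign of $\deg b$ is also not "absorbed". Shifting the argument by $\pi$ moves $\real f_u$ by $\tfrac12$, and no change of branch undoes that. What you need is $\deg b>0$. This holds because $bH\in\End(A)\otimes\bR$ acts $\bC$-linearly on $T_0A$, so $\deg(bH)=|\det_{\bC}(bH)_*|^2>0$.

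For independence of $H$, connectedness of $\Amp(A)$ alone would also require continuity of the limit in $H$. Instead, use the global formula: $\real f_u$ equals $-\frac{1}{2\pi}$ times the sum of the $2g$ angles $\arg_{(0,\pi)}(\lambda_j+i)$. This gives the limit $-g/2$ along every ray.

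With the correct count, Step~2 closes as you expect. The relations $\widetilde u u=1$ give
$$
u_2\cdot\omega=d_2b_2^{-1}-\widehat{b}_2^{-1}(a_2+b_2\omega)^{-1},
$$
so $b_1^{-1}a_1+u_2\cdot(itH)$ tends to $q=b_1^{-1}b_{12}b_2^{-1}$, with positive imaginary part of order $t^{-1}$. Since $q$ is nondegenerate when $u_1u_2\in B_A$, the $2g$ angles tend to $0$ or $\pi$ according to the signature of $H_q$. Hence $\real f_{u_1}(u_2\cdot itH)\to p_A(q)-g$. The terms for $u_2$ and $u_1u_2$ both tend to $-g/2$ and cancel, which gives $c'=p_A(q)-g$.
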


\begin{proof}[Proof of Theorem~\ref{thm:main-universal-shifting}]
Compose the quotient $\Aut\Db(A)\to\widetilde U(A)$ with $I_A$, the inverse of (\ref{eq:degree-pullback}), and projection to $\cE_{\det}(W_A)\cong\widetilde{\Sp}(W_A\Omega_A)$:
$$
\alpha_A\colon\Aut\Db(A)\to\widetilde{\Sp}(W_A,\Omega_A).
$$
It covers $r_A\circ\gamma_A$ and sends $[1]$ to $z$.

Fix $\omega_0\in D_A$ and pullback the logarithmic height $q_{Z_0}$ at $Z_0=\iota_A(\omega_0)$ to $\widetilde U(A)$; denote it by $q_A$. 
We claim
$$
|q_A(\xi)-\sh(\xi)|\leq3g
\qquad(\xi\in\widetilde U(A)).
$$
Both functions increase by $m$ under multiplication by $[m]$, so it suffices to take $\xi=\sigma_A(u)$. 
Write $u=u_1u_2$ with $u_1,u_2\in B_A$. 
The group laws give
$$
q_A(\sigma_A(u))=\real f_{u_1}(u_2\cdot\omega_0)
+\real f_{u_2}(\omega_0)-c_A(u_1,u_2).
$$
By Lemma~\ref{lem:qz0defectn/2} and (\ref{eq:big-log-normalization}), the two real parts lie in $[-\frac{3g}{2},\frac{g}{2}]$, and the cocycle $c_A(u_1,u_2)\in[-g,0]$.
This proves the claim.

Apply this bound to the class of $\Phi^N$ and divide by $N$. Lemmas~\ref{lemma-2-sh-tau-defect} and \ref{lem:qz0defectn/2} identify the two limits and give
$$
\tau(\Phi)=\widetilde{\Rot}(\alpha_A(\Phi)).
$$
Finally, any other lift with this equality has the form $\alpha'_A(\Phi)=z^{m(\Phi)}\alpha_A(\Phi)$ for an integer $m(\Phi)$. 
Additivity of $\widetilde{\Rot}$ on commuting elements gives
$$
\widetilde{\Rot}(\alpha'_A(\Phi))=m(\Phi)+\widetilde{\Rot}(\alpha_A(\Phi)).
$$
Hence $m(\Phi)=0$ for every $\Phi$, proving the uniqueness.
\end{proof}

\begin{cor}
\label{cor:shifting-rational-or-transcendental}
For every $\Phi\in\Aut\Db(A)$, 
$$
e^{2\pi i \tau(\Phi)} \in \overline{\bQ}.
$$
Consequently, every shifting number is either rational or transcendental over $\bQ$.
\end{cor}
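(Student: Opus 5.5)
Theorem~\ref{thm:main-universal-shifting} gives $\tau(\Phi)=\widetilde{\Rot}(\alpha_A(\Phi))$, so $e^{2\pi i\tau(\Phi)}=e^{2\pi i\Rot(M)}$ with $M=r_A(\gamma_A(\Phi))\in\Sp(W_A,\Omega_A)$. The plan is therefore to show that $e^{2\pi i\Rot(M)}$ is an algebraic number whenever $M$ comes from an integral element $u=\gamma_A(\Phi)\in U(A)\subseteq\End(A\times\widehat A)$.

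First, $M$ preserves the lattice $H_1(A\times\widehat A,\bZ)\subseteq W_A$, so in a lattice basis $M$ is an integer matrix. In particular its characteristic polynomial has integer coefficients, and all eigenvalues of $M$ are algebraic integers. Take the refined real Jordan decomposition $M=M_eM_hM_u$. By (\ref{eq:symplectic-rotation-jordan}) we have $\Rot(M)=\Rot(M_e)$, and the eigenvalues of $M_e$ are $\lambda/|\lambda|$ as $\lambda$ runs over the eigenvalues of $M$ (with multiplicity). Since $M_e$ is elliptic, it is conjugate into the maximal compact subgroup and fixes some $Z\in\cH(W_A)$. By (\ref{eq:symplectic-rotation-lagrangian}), $e^{2\pi i\Rot(M_e)}=\det(M_e|_{\Lambda_Z})^{-1}$.

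The determinant on the $n$-dimensional positive Lagrangian $\Lambda_Z$ is a product of $n$ eigenvalues of $M_e$, one from each conjugate pair $\{\mu,\bar\mu\}$. Hence $e^{2\pi i\tau(\Phi)}$ is a product of numbers $\mu^{\pm1}$ with $\mu=\lambda/|\lambda|$. Each such $\mu$ is algebraic: $|\lambda|^2=\lambda\bar\lambda$ is algebraic because $\bar\lambda$ is also an eigenvalue of the real matrix $M$, and $\overline{\bQ}$ is closed under square roots and quotients. A finite product of algebraic numbers is algebraic, which proves $e^{2\pi i\tau(\Phi)}\in\overline{\bQ}$.

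For the dichotomy, suppose $\tau=\tau(\Phi)$ is algebraic and irrational. Then Gelfond--Schneider, applied to $e^{i\pi}=-1$ with exponent $2\tau$, shows that $(-1)^{2\tau}=e^{2\pi i\tau}$ is transcendental. This contradicts the first part, so $\tau$ is either rational or transcendental. (Here the branch choice is harmless, since every value of $(-1)^{2\tau}$ is transcendental.)

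The main obstacle is checking that the elliptic factor's Lagrangian determinant really is a product of eigenvalue phases of $M$. Elliptic conjugacy preserves eigenvalues, and $M_e$ acts on $\Lambda_Z\oplus\overline{\Lambda_Z}=W_{A,\bC}$ with conjugate spectra on the two summands. So the spectrum of $M_e|_{\Lambda_Z}$ is a sub-multiset of the spectrum of $M_e$, whose eigenvalues are the phases of those of $M$. Once this is in place, the remaining steps are routine.
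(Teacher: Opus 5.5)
Your proposal is correct and follows the paper's proof essentially step for step. You reduce via Theorem~\ref{thm:main-universal-shifting} and (\ref{eq:symplectic-rotation-jordan}) to the elliptic factor $M_e$ of the integral matrix $M=r_A(u_\Phi)$, write $e^{2\pi i\tau(\Phi)}=\det(M_e|_{\Lambda})^{-1}$ via (\ref{eq:symplectic-rotation-lagrangian}) as a product of the algebraic phases $\lambda/|\lambda|$, and conclude with Gelfond--Schneider applied to $(-1)^{2\tau(\Phi)}$; your extra justifications (algebraicity of $\lambda/|\lambda|$, and the spectrum of $M_e|_{\Lambda_Z}$ being a sub-multiset of that of $M_e$) only fill in details the paper leaves implicit.
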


\begin{proof}
Put $M=r_A(u_\Phi)$, and let $M_e$ be its elliptic factor.
By Theorem~\ref{thm:main-universal-shifting}, \cite{BIW10}*{Proposition~7.8} and (\ref{eq:symplectic-rotation-lagrangian}),
$$
e^{2\pi i\tau(\Phi)}
=\det(M_e|_\Lambda)^{-1},
$$
where $\Lambda\subseteq W_{A,\bC}$ is a positive Lagrangian preserved by $M_e$.
Since $M$ is integral, its eigenvalues $\lambda$ are algebraic, and so are the eigenvalues $\lambda/|\lambda|$ of $M_e$.
Thus the above determinant is algebraic.

If $\tau(\Phi)$ were algebraic irrational, the Gelfond--Schneider theorem would make $e^{2\pi i\tau(\Phi)}$, a value of $(-1)^{2\tau(\Phi)}$, transcendental \cite{Baker}*{Chapter~2}.
This proves the last assertion.
\end{proof}

%--------------------%--------------------%--------------------
\section{The reduced shifting spectrum}
\label{sec:shifting-spectrum}
In this section we prove Theorem~\ref{thm:main-shifting-spectrum}.
The case where Condition~$(\star)$ holds is treated in Sections~\ref{subsec:6.1}--\ref{subsec:6.2}, while the case where Condition~$(\star)$ fails is treated in Sections~\ref{subsec:6.3}--\ref{subsec:6.4}.

%--------------------%--------------------%--------------------
\subsection{Isogenies, products, and scalar embeddings}
\label{subsec:6.1}

We first prove the compatibility of Orlov cocycles under isogeny, products, and scalar embeddings.

\begin{lemma}
\label{lem:isogeny-cocycle}
Let $\mu\colon B\to A$ be an isogeny, and put
$$
f_\mu\coloneqq
\begin{pmatrix}
    \mu & 0 \\ 0  & \widehat{\mu}^{-1}
\end{pmatrix}
\colon B\times\widehat{B}\dashrightarrow A\times\widehat{A}.
$$
Then conjugation by $f_\mu$ identifies $U(B)_F$ with $U(A)_F$, for $F=\bQ,\bR$, and
$$
c_A(f_\mu u_1 f_\mu^{-1}, f_\mu u_2 f_\mu^{-1})=c_B(u_1,u_2).
$$
Consequently, $\overline{\tau}_A(f_\mu u f_\mu^{-1})=\overline{\tau}_B(u)$.
\end{lemma}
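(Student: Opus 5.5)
The proof has three parts. First, conjugation by $f_\mu$ preserves the defining condition $\widetilde u=u^{-1}$. Second, it carries the big cells to big cells, and on those cells Orlov's formula (\ref{eqn:Orlov-cocycle-formula}) gives the same value on both sides. Third, Lemma~\ref{lem:big-subset-extension} propagates that equality to all pairs. The reduced shifting numbers then agree because they are homogenizations of the central coordinate of isomorphic extensions.

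\emph{The conjugation.} Write $f=f_\mu$ and $u=\begin{pmatrix}a&b\\c&d\end{pmatrix}\in\End(B\times\widehat B)_F$. Direct block multiplication gives
$$
fuf^{-1}=\begin{pmatrix}\mu a\mu^{-1}&\mu b\widehat\mu\\ \widehat\mu^{-1}c\mu^{-1}&\widehat\mu^{-1}d\widehat\mu\end{pmatrix}.
$$
Using $\widehat{xy}=\widehat y\widehat x$ and $\widehat{\widehat\mu}=\mu$, one checks that $\widetilde{fuf^{-1}}=f\widetilde u f^{-1}$. Hence $fuf^{-1}\in U(A)_F$ exactly when $u\in U(B)_F$. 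The inverse map is conjugation by $f^{-1}$, so this is an isomorphism $U(B)_F\cong U(A)_F$. The upper-right block becomes $\mu b\widehat\mu$, which is invertible exactly when $b$ is. So the conjugation maps $U(B)^0_F$ onto $U(A)^0_F$.

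\emph{The cocycles on the big cell.} Take $u_1,u_2\in U(B)^0_F$ with $u_1u_2\in U(B)^0_F$, and write $b'_i=\mu b_i\widehat\mu$ and $b'_{12}=\mu b_{12}\widehat\mu$. Then
$$
b_1'^{-1}b'_{12}b_2'^{-1}=\widehat\mu^{-1}\bigl(b_1^{-1}b_{12}b_2^{-1}\bigr)\mu^{-1}.
$$
Here $b_1^{-1}b_{12}b_2^{-1}$ is a real symmetric homomorphism $\widehat B\to B$. Lemma~\ref{lem:isogeny-inertia-q} is stated for symmetric maps $B\to\widehat B$, so I apply it on the dual side: $\widehat\mu$ is an isogeny $\widehat A\to\widehat B$, and the dual of $\widehat B$ is $B$. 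The lemma then says that $\widehat\mu^{-1}(\cdot)\mu^{-1}$ preserves inertia, and in particular the positive index $p$. Both abelian varieties have dimension $g$, so (\ref{eqn:Orlov-cocycle-formula}) gives $c_A(fu_1f^{-1},fu_2f^{-1})=c_B(u_1,u_2)$ on these pairs.

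\emph{Extension to all pairs.} Define the pulled-back cocycle $c'(u_1,u_2)\coloneqq c_A(fu_1f^{-1},fu_2f^{-1})$ on $U(B)_F$. It agrees with $c_B$ whenever $u_1,u_2,u_1u_2\in U(B)^0_F$. By remark (i), $U(B)^0_F$ is big in $U(B)_F$, so Lemma~\ref{lem:big-subset-extension} and the remark following it give $c'=c_B$ everywhere. I expect the main obstacle to be the point where that remark, as written, passes from agreement on pairs with $b_1b_2\in B$ to agreement on all of $B\times B$. I would handle it as follows. The lemma yields an isomorphism $i$ of central extensions fixing $(b,0)$ for every $b\in B$. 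For arbitrary $b_1,b_2\in B$, bigness produces a factorization of $b_1b_2$ through elements of $B$ along which the cocycle values are already controlled, and this forces $i(b_1b_2,0)=(b_1b_2,0)$. If that step is delicate, I would fall back on the fact that the lemma already gives an isomorphism of extensions. That isomorphism is all the final claim needs, and it is also the form of the statement used later.

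\emph{The reduced shifting numbers.} The map $(u,m)\mapsto(fuf^{-1},m)$ is a group isomorphism from $\widetilde U(B)_F$ to $\widetilde U(A)_F$ that preserves the central coordinate. Homogenizations of the central coordinate therefore correspond under it, so $\tau_A(fuf^{-1},0)=\tau_B(u,0)$, and reducing modulo $\bZ$ gives $\overline\tau_A(fuf^{-1})=\overline\tau_B(u)$.
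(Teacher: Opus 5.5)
Your outline is the paper's proof: the explicit conjugation formula, preservation of the big cell, Orlov's formula (\ref{eqn:Orlov-cocycle-formula}) combined with Lemma~\ref{lem:isogeny-inertia-q}, extension by Lemma~\ref{lem:big-subset-extension}, and homogenization. Two points need correcting.

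\textbf{A typing slip.} Since $b_i\colon\widehat B\to B$, the composite $b_1^{-1}b_{12}b_2^{-1}$ is a map $B\to\widehat B$, not $\widehat B\to B$. So Lemma~\ref{lem:isogeny-inertia-q} applies directly, with $q_B=b_1^{-1}b_{12}b_2^{-1}$ and $q_A=\widehat\mu^{-1}q_B\mu^{-1}$. The detour through the dual isogeny $\widehat\mu$ is unnecessary. As you set it up, it would actually produce $\mu^{-1}q\,\widehat\mu^{-1}$ rather than the expression you need.

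\textbf{The extension step.} The obstacle you anticipate here is self-inflicted, and your proposed workaround does not work.
\begin{itemize}
\item You checked equality only for pairs with $u_1,u_2,u_1u_2\in U(B)^0_F$. But (\ref{eqn:Orlov-cocycle-formula}) holds for all $u_1,u_2\in U(B)^0_F$: $b_{12}$ need not be invertible, and the positive index is defined for degenerate symmetric maps.
\item Lemma~\ref{lem:isogeny-inertia-q} likewise has no nondegeneracy hypothesis. Your computation therefore gives agreement on all of $U(B)^0_F\times U(B)^0_F$.
\item That is exactly the hypothesis of the remark after Lemma~\ref{lem:big-subset-extension}. The remark assumes agreement on all of $B\times B$ and uses it at $g=b_1b_2$. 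It does not derive this from agreement on pairs with $b_1b_2\in B$.
\end{itemize}

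In the weaker setting, the ``factorization'' argument you sketch cannot succeed. The lemma only yields $i(g,z)=(g,z+\phi(g))$ with $\phi|_B=0$ and $c'-c=\delta\phi$. Any function $\phi$ vanishing on $B$ is consistent with agreement on pairs whose product lies in $B$. So nothing forces $\phi(b_1b_2)=0$ when $b_1b_2\notin B$.

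Your fallback of settling for an abstract isomorphism does not prove the stated identity of cocycles either. Your last paragraph also relies on the specific map $(u,m)\mapsto(fuf^{-1},m)$ being a homomorphism, which holds only if the cocycles are literally equal. With only $i$, you would need to show that $\phi$ is bounded before comparing homogenizations, and even then you would get agreement only modulo $\bZ$.

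Dropping the restriction $u_1u_2\in U(B)^0_F$ closes the gap and recovers the paper's argument.
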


\begin{proof}
It is straightforward to verify that conjugation by $f_\mu$ gives an isomorphism between $U(B)_F$ and $U(A)_F$. Explicitly,
$$
f_\mu 
\begin{pmatrix}
a & b \\ c & d
\end{pmatrix}
f_\mu^{-1}=\begin{pmatrix}
    \mu a\mu^{-1} & \mu b \widehat{\mu} \\
    \widehat{\mu}^{-1} c \mu^{-1} & \widehat{\mu}^{-1} d \widehat{\mu}
\end{pmatrix}.
$$
This preserves the big cell: $f_\mu U(B)_F^0 f_\mu^{-1}=U(A)_F^0$.
For $u_1,u_2\in U(B)_F^0$, the cocycle $c_B(u_1,u_2)$ is determined by the symmetric homomorphism $q_B=b_1^{-1}b_{12}b_2^{-1}$ in (\ref{eqn:Orlov-cocycle-formula}); while the cocycle $c_A(f_\mu u_1 f_\mu^{-1}, f_\mu u_2 f_\mu^{-1})$ is determined by the symmetric homomorphism 
$$
q_A=( \mu b_1 \widehat{\mu})^{-1} ( \mu b_{12} \widehat{\mu}) ( \mu b_2 \widehat{\mu})^{-1}
=\widehat{\mu}^{-1} b_1^{-1}b_{12} b_2^{-1} \mu^{-1} = \widehat{\mu}^{-1} q_B \mu^{-1}.
$$
By Lemma~\ref{lem:isogeny-inertia-q}, the inertia of $q_A$ and $q_B$ coincide.
Therefore, $c_A$ and $c_B$ coincide on the big cells by (\ref{eqn:Orlov-cocycle-formula}), and extend to the whole $U(A)_F$ and $U(B)_F$ by Lemma~\ref{lem:big-subset-extension}.
The resulting isomorphism of central extensions preserves the integer coordinate.
Homogenization proves the assertion about reduced shifting numbers.
\end{proof}

\begin{lemma}
\label{lem:product-cocycle}
Let $B=\prod_{i=1}^s B_i$ be a product of abelian varieties.
For $u_i,v_i\in U(B_i)_\bR$,
$$
c_B\left(\bigoplus_i u_i, \bigoplus_i v_i\right) = 
\sum_i c_{B_i}(u_i,v_i), \qquad
\overline{\tau}_B\left(\bigoplus_i u_i\right)=
\sum_i \overline{\tau}_{B_i}(u_i).
$$
\end{lemma}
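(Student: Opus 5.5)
The plan follows the pattern of Lemma~\ref{lem:isogeny-cocycle}: verify the identity on a big subset using Orlov's formula (\ref{eqn:Orlov-cocycle-formula}), extend it by Lemma~\ref{lem:big-subset-extension}, and then homogenize. Identify $\widehat B=\prod_i\widehat{B_i}$. A block-diagonal element $\bigoplus_i u_i$, with $u_i=\begin{pmatrix}a_i&b_i\\c_i&d_i\end{pmatrix}$, is the endomorphism of $B\times\widehat B$ with blocks $a=\bigoplus a_i$, $b=\bigoplus b_i$, $c=\bigoplus c_i$, $d=\bigoplus d_i$. Since $\widetilde u$ is computed blockwise, the embedding $\prod_i U(B_i)_\bR\hookrightarrow U(B)_\bR$ is a group homomorphism. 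Moreover, $b$ is invertible if and only if every $b_i$ is, so the intersection of the image with $U(B)^0_\bR$ is exactly $\prod_i U(B_i)^0_\bR$.

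\textbf{Step 1: agreement on big cells.} Take $u_i,v_i\in U(B_i)^0_\bR$ with $u_iv_i\in U(B_i)^0_\bR$. The upper-right block of the product is $\bigoplus_i b_{12,i}$, so
$$
q=b_1^{-1}b_{12}b_2^{-1}=\bigoplus_i q_i,\qquad q_i=b_{1,i}^{-1}b_{12,i}b_{2,i}^{-1}.
$$
The Hermitian form $H_q$ must then be shown to be the orthogonal sum of the $H_{q_i}$. Use a product polarization $\lambda=\bigoplus\lambda_i$ on $B$. Then $V_B=\bigoplus V_{B_i}$, and both $H_L$ and $\lambda^{-1}q$ are block diagonal. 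Since $p$ does not depend on the polarization, we get $p_B(q)=\sum_i p_{B_i}(q_i)$. Together with $g=\sum_i g_i$, formula (\ref{eqn:Orlov-cocycle-formula}) gives
$$
c_B\Bigl(\bigoplus u_i,\bigoplus v_i\Bigr)=\sum_i c_{B_i}(u_i,v_i)
$$
on $\prod_i U(B_i)^0_\bR$.

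\textbf{Step 2: extension to all of $\prod_i U(B_i)_\bR$.} Consider two $2$-cocycles on $G=\prod_i U(B_i)_\bR$:
\begin{itemize}
\item[] $c$, the restriction of $c_B$ along the embedding;
\item[] $c'=\sum_i c_{B_i}\circ\mathrm{pr}_i$.
\end{itemize}
By Step~1 they agree on the subset $\prod_i U(B_i)^0_\bR$, which is big because bigness is preserved under finite products (items (i) and (iii) of the preceding Remark). Lemma~\ref{lem:big-subset-extension} and the remark following it then give $c=c'$ on all of $G\times G$. This is the first identity.

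\textbf{Step 3: reduced shifting numbers.} By the first identity, the map
$$
\bigl((u_i,m_i)\bigr)_i\mapsto\Bigl(\bigoplus u_i,\ \sum m_i\Bigr)
$$
is a homomorphism $\prod_i\widetilde U(B_i)_\bR\to\widetilde U(B)_\bR$ that carries the central coordinate to the sum of central coordinates. Hence
$$
\bigl(\bigoplus u_i,0\bigr)^n=\Bigl(\bigoplus u_i^n,\ \sum_i m_i^{(n)}\Bigr),
$$
where $(u_i,0)^n=(u_i^n,m_i^{(n)})$. Dividing by $n$ and letting $n\to\infty$ gives $\tau_B(\bigoplus u_i,0)=\sum_i\tau_{B_i}(u_i,0)$. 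Reducing modulo $\bZ$ yields the second identity.

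\textbf{Main obstacle.} The delicate point is Step~1. One must check that the positive index is additive, which relies on the independence of $p$ from the polarization so that a product polarization may be used. One must also confirm that Orlov's real extension $c_B$ is the cocycle given on the big cell by (\ref{eqn:Orlov-cocycle-formula}), so that no normalization mismatch arises.
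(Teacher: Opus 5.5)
Your proposal is the paper's proof. It computes on the big cells via (\ref{eqn:Orlov-cocycle-formula}), where the symmetric homomorphism is the direct sum of the factors' ones and positive index and dimension are additive. It then extends with Lemma~\ref{lem:big-subset-extension} using bigness of products, and homogenizes.

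One step needs tightening. In Step~1 you check only pairs with $u_iv_i\in U(B_i)^0_\bR$. That is the hypothesis of Lemma~\ref{lem:big-subset-extension}, and on its own it gives only an isomorphism of central extensions. A priori $c'-c$ is then just the coboundary of an integer function vanishing on the big subset, not zero.

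The remark after that lemma, which you cite to get $c=c'$ on all of $G\times G$, needs $c(b_1,b_2)=c'(b_1,b_2)$ for \emph{all} $b_1,b_2$ in the big subset. It writes an arbitrary $g$ as $b_1b_2$, and $b_1b_2$ need not lie in the big subset.

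The fix is immediate. Your computation $q=\bigoplus_i q_i$ never uses invertibility of $b_{12}$, and (\ref{eqn:Orlov-cocycle-formula}) is stated for all $u_1,u_2\in U(A)^0_\bR$. So drop the condition $u_iv_i\in U(B_i)^0_\bR$, as the paper does. The remark then gives the first identity, and your Step~3 goes through as written.
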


\begin{proof}
For $u_i,v_i$ lie in the big cells, the symmetric homomorphism in (\ref{eqn:Orlov-cocycle-formula}) is the direct sum of those of the factors.
Positive indices and dimensions are additive, giving the cocycle identity.
Then apply Lemma~\ref{lem:big-subset-extension} to extend the cocycle identity to every element.
Homogenization proves the assertion about reduced shifting numbers.
\end{proof}

For $F=\bQ$ or $\bR$, a polarization $\lambda\colon B\to\widehat B$
gives a scalar embedding
\begin{equation}
\label{eqn:scalar-embedding}
\iota_\lambda\colon \SL_2(F)\hookrightarrow U(B)_F,
\qquad
\begin{pmatrix}a&b\\c&d\end{pmatrix}
\mapsto
\begin{pmatrix}
a1_B & b\lambda^{-1}\\
c\lambda & d1_{\widehat B}
\end{pmatrix}.
\end{equation}

\begin{lemma}
\label{lem:scalar-embedding-cocycle}
Let $F=\bQ$ or $\bR$. For every $h_1,h_2\in\SL_2(F)$, one has
$$
c_B(\iota_\lambda(h_1),\iota_\lambda(h_2))
=\dim(B)\,\kappa(h_1,h_2).
$$
Consequently, for every $h\in\SL_2(F)$,
$$
\overline\tau_B(\iota_\lambda(h))
=\dim(B)\,\rot(h)  \qquad \text{in } \bR/\bZ.
$$
\end{lemma}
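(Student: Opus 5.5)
The plan is to compare both cocycles on the big cell and then use Polishchuk's extension principle, exactly as in Lemmas~\ref{lem:isogeny-cocycle} and \ref{lem:product-cocycle}.

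First, check the homomorphism property. The map $\iota_\lambda$ is a group homomorphism into $U(B)_F$, because $\lambda$ is symmetric, so $\widehat{\lambda^{-1}}=\lambda^{-1}$. One computes $\widetilde{\iota_\lambda(h)}=\iota_\lambda(h^{-1})$, and the products reduce to $2\times2$ matrix multiplication using $\lambda^{-1}\lambda=1_B$ and $\lambda\lambda^{-1}=1_{\widehat B}$.

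Second, compare the cocycles on the big cell. Let $\mathcal B=\{h\in\SL_2(F): b\neq0\}$, which is big in $\SL_2(F)$ by the Remark. The embedding sends $\mathcal B$ into $U(B)^0_F$, since $b\lambda^{-1}$ is invertible. For $h_1,h_2\in\mathcal B$ with $h_1h_2\in\mathcal B$, formula (\ref{eqn:Orlov-cocycle-formula}) applies with symmetric homomorphism
$$
q=(b_1\lambda^{-1})^{-1}(b_{12}\lambda^{-1})(b_2\lambda^{-1})^{-1}=\frac{b_{12}}{b_1b_2}\,\lambda .
$$
This is a rational (or real) multiple of a polarization. Its Hermitian form $H_q$ is therefore $\frac{b_{12}}{b_1b_2}H_\lambda$, with $H_\lambda$ positive definite.

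It follows that $p_B(q)=\dim B$ if $b_{12}/(b_1b_2)>0$, and $p_B(q)=0$ otherwise. Hence
$$
c_B(\iota_\lambda h_1,\iota_\lambda h_2)=\dim B\,(\mathbf 1_{\{b_{12}/(b_1b_2)>0\}}-1)=\dim B\,\kappa(h_1,h_2).
$$
There is a degenerate case to handle: if $b_{12}=0$, the element $h_1h_2$ is not in $\mathcal B$, so it is excluded. This is exactly the hypothesis needed in Lemma~\ref{lem:big-subset-extension}.

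Third, extend to all of $\SL_2(F)$. The pullback $\iota_\lambda^*c_B$ is a $2$-cocycle on $\SL_2(F)$, and $\dim(B)\kappa$ is another. Lemma~\ref{lem:big-subset-extension} gives an isomorphism of central extensions that is the identity over $\mathcal B$. The consequence spelled out after that lemma then shows that the two cocycles agree everywhere, since $G=\mathcal B\mathcal B$.

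This step is where I expect the main subtlety. That argument requires agreement for all pairs in $\mathcal B\times\mathcal B$, not just those with product in $\mathcal B$. So I would instead use the isomorphism $i$ directly: it preserves the central coordinate on $\mathcal B$, and bigness lets one track that it preserves the section on products. If needed, one can also check the case $b_{12}=0$ by continuity of Orlov's extension of $c_A$ to $U(A)_\bR$, which is defined there via the inertia of a limiting form.

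Fourth, homogenize. The agreement gives a homomorphism
$$
\SL_2(F)\times_{\kappa}\bZ\to \widetilde U(B)_F,\qquad (h,m)\mapsto(\iota_\lambda h,\dim(B)\,m),
$$
which multiplies the central coordinate by $\dim B$. Homogenization commutes with this, so
$$
\tau_B(\iota_\lambda h,0)=\dim(B)\,\tau_\kappa(h,0).
$$
Reducing modulo $\bZ$ and applying Lemma~\ref{lem:SL2-coycle-rotation} yields $\overline\tau_B(\iota_\lambda h)=\dim(B)\rot(h)$.
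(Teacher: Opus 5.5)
Your route is the paper's: compute $q=\frac{b_{12}}{b_1b_2}\lambda$ from (\ref{eqn:Orlov-cocycle-formula}), read off its positive index, extend from the big cell by Lemma~\ref{lem:big-subset-extension}, and homogenize using Lemma~\ref{lem:SL2-coycle-rotation}. The one flaw is how you handle pairs with $b_{12}=0$, and there is no reason to exclude them. Formula (\ref{eqn:Orlov-cocycle-formula}) only requires $u_1,u_2\in U(B)^0_F$, i.e.\ $b_1b_2\neq0$, and places no condition on the product. Likewise, (\ref{eq:rank-one-cocycle}) defines $\kappa$ whenever $b_1b_2\neq0$. For $b_{12}=0$ you get $q=0$, so $p_B(q)=0$ and $c_B=-\dim B$, while $\kappa(h_1,h_2)=-1$. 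Your Step~2 computation therefore already gives agreement on all of $\mathcal{B}\times\mathcal{B}$. The remark after Lemma~\ref{lem:big-subset-extension}, which uses $\SL_2(F)=\mathcal{B}\mathcal{B}$, then gives agreement everywhere. This is exactly how the paper argues.

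Neither of your proposed fallbacks would close the gap as stated. Continuity is unavailable: the cocycle is integer-valued, and the positive index of $t\lambda$ jumps exactly at $t=0$, so no limiting argument fixes the value there. The ``tracking via bigness'' argument is circular. Suppose agreement is known only for pairs whose product lies in $\mathcal{B}$, with $c=\iota_\lambda^*c_B$ and $c'=\dim(B)\kappa$. Then the lemma gives $i(g,m)=(g,m+\phi(g))$ with $\phi|_{\mathcal{B}}=0$. For $g=b_1b_2\notin\mathcal{B}$ one finds $\phi(g)=c'(b_1,b_2)-c(b_1,b_2)$, which is precisely the excluded value. That weaker isomorphism would still suffice for the reduced statement modulo $\bZ$, because $\phi$ is then bounded and integer-valued. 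It does not suffice for the first claim, that the cocycles coincide for every $h_1,h_2$.
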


\begin{proof}
For $b_1b_2\neq0$, the symmetric homomorphism in (\ref{eqn:Orlov-cocycle-formula}) is $(b_{12}/(b_1b_2))\lambda$.
Its positive index is $\dim B$ when the scalar is positive and zero otherwise.
This proves the cocycle identity on the big cells, and Lemma~\ref{lem:big-subset-extension} extends it to all pairs.
Lemma~\ref{lem:SL2-coycle-rotation} then gives the homogenized formula.
\end{proof}

We next record the finite index reductions.

\begin{lemma}
\label{lem:basic-finite-index-finiteness}
Let $r\colon G\to \bR/\bZ$ satisfy $r(g^n)=nr(g)$ for $n\geq1$, and $H\subseteq G$ have finite index.
Then $r(G)$ is finite if and only if $r(H)$ is finite.
\end{lemma}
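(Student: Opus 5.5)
The forward direction is immediate: since $H\subseteq G$, we have $r(H)\subseteq r(G)$, so finiteness of $r(G)$ gives finiteness of $r(H)$. The substance is the converse, and the plan is to use the homogeneity $r(g^n)=nr(g)$ to push every element of $G$ into $H$ by taking a power, and then to read off $r(g)$ from $r(g^n)$ up to a bounded ambiguity.

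First we reduce to a normal subgroup. Replace $H$ by its normal core $H_0=\bigcap_{x\in G}xHx^{-1}$. This is a finite intersection of conjugates, because $G$ acts on the finite set $G/H$, so $H_0$ is normal of finite index in $G$. Moreover $r(H_0)\subseteq r(H)$ is finite. Let $n=[G:H_0]$. For every $g\in G$, the image of $g$ in the finite group $G/H_0$ has order dividing $n$, so $g^n\in H_0$.

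Next we extract $r(g)$ from $r(g^n)$. Homogeneity gives $n\,r(g)=r(g^n)\in r(H_0)$. Hence $r(g)$ lies in the set
$$
\{x\in\bR/\bZ : nx\in r(H_0)\}.
$$
For each $y\in\bR/\bZ$, the equation $nx=y$ has exactly $n$ solutions in $\bR/\bZ$. So this set has at most $n\,|r(H_0)|$ elements, and therefore $r(G)$ is finite.

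There is no real obstacle in this argument. The only point needing care is to avoid assuming that $H$ is normal, which the passage to the normal core handles. One could skip the core entirely by noting that the cosets $g^kH$ cannot all be distinct, so some positive power $g^m\in H$ with $m\leq[G:H]$. Then $m\,r(g)\in r(H)$ with $m$ ranging over a bounded set, which gives the same finiteness.
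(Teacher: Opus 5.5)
Your proof is correct and is essentially the paper's argument. The only difference is the uniform exponent: the paper takes $N=[G:H]!$, so that $g^N\in H$ for every $g\in G$, where you use the index of the normal core; it then concludes $r(G)\subseteq\{x\in\bR/\bZ : Nx\in r(H)\}$ exactly as you do.
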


\begin{proof}
One implication is obvious, since $r(H)\subseteq r(G)$.
Conversely, let $d=[G:H]$ and $N=d!$. 
Then $g^N\in H$ for every $g\in G$. Thus
$$
r(G)\subseteq\{x\in\bR/\bZ:Nx\in r(H)\}.
$$
This proves the other implication.
\end{proof}

\begin{lemma}
\label{lem:isogeny-U(A)-U(B)-commensurable}
For an isogeny $\mu\colon B\to A$, the subgroups $U(B)$ and $f_\mu^{-1}U(A)f_\mu$ of $U(B)_\bQ$ are commensurable.
\end{lemma}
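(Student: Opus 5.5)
The approach is to exhibit a single integer $N\geq1$ such that the principal congruence subgroup
$$
\Gamma_N\coloneqq\{v\in U(B): v\equiv 1 \pmod{N\,\End(B\times\widehat B)}\}
$$
is contained in $f_\mu^{-1}U(A)f_\mu$, and symmetrically an $N'$ with $f_\mu^{-1}\Gamma'_{N'}f_\mu\subseteq U(B)$, where $\Gamma'_{N'}$ is the analogous subgroup of $U(A)$. Both groups sit inside the arithmetic group $U(B)_\bQ$, and in each group the principal congruence subgroups have finite index, being kernels of reduction to the finite groups $\End(B\times\widehat B)/N$. The two inclusions then give commensurability: $\Gamma_N\subseteq U(B)\cap f_\mu^{-1}U(A)f_\mu$ has finite index in $U(B)$, and likewise from the other side.

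For the first inclusion, I choose an isogeny $\nu\colon A\to B$ with $\mu\nu=[n]_A$ and $\nu\mu=[n]_B$, where $n=\deg\mu$. Then $\mu^{-1}=\nu/n$ and $\widehat\mu^{-1}=\widehat\nu/n$ rationally. The explicit formula in the proof of Lemma~\ref{lem:isogeny-cocycle} gives, for $v=1+Nw$ with $w\in\End(B\times\widehat B)$,
$$
f_\mu v f_\mu^{-1}=1+N\begin{pmatrix}\mu w_{11}\nu/n & \mu w_{12}\widehat\mu\\ \widehat\nu w_{21}\nu/n^2 & \widehat\nu w_{22}\widehat\mu/n\end{pmatrix}.
$$
Taking $N=n^2$ makes every block integral. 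So $f_\mu v f_\mu^{-1}$ lies in $\End(A\times\widehat A)$, and being in $U(A)_\bQ$ it lies in $U(A)$. The reverse inclusion is the same computation with $\nu$ in place of $\mu$. Equivalently, one can note $f_\mu^{-1}=f_\nu/n$ up to the scalar conventions, and check directly that conjugating by $f_\mu^{-1}$ produces denominators dividing a power of $n$.

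The step that needs care is the claim that $\Gamma_N$ has finite index in $U(B)$. The kernel of $U(B)\to\bigl(\End(B\times\widehat B)/N\bigr)^\times$ is a group, namely the intersection of $U(B)$ with a finite-index subgroup of the unit group of the order $\End(B\times\widehat B)$. Elements of $U(B)$ are units of this order, since $u^{-1}=\widetilde u$ is again integral. Hence the reduction map is a homomorphism to a finite group, and its kernel is exactly $\Gamma_N$. The remaining points are routine bookkeeping: membership in $U(\cdot)_\bQ$ is preserved by conjugation, as the first part of Lemma~\ref{lem:isogeny-cocycle} shows.
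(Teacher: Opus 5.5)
Your proof is correct and is essentially the paper's argument. Conjugation by $f_\mu^{\pm1}$ only introduces denominators dividing $n^2$; the paper phrases this as choosing $N$ with $Nf_\mu^{\pm1}$ integral and using $\Gamma_X(N^2)$. So each principal congruence subgroup lands in the other group and has finite index there, and you justify that finite-index fact a bit more explicitly than the paper, via reduction of units of the order $\End(B\times\widehat B)$.

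The only slip is cosmetic. $f_\nu$ is not $n f_\mu^{-1}$; they differ by $\mathrm{diag}(n,n^{-1})$. So the reverse inclusion should rest on the direct check you also mention: $f_\mu^{-1}wf_\mu$ has denominators dividing $n^2$, and that check is correct.
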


\begin{proof}
Choose $N\geq1$ so that $Nf_\mu$ and $Nf_\mu^{-1}$ are actual homomorphisms. Put
$$
R_X=\End(X\times\widehat{X}), \qquad \Gamma_X(N^2)=U(X)\cap(1+N^2R_X).
$$
For $u=1+N^2r\in\Gamma_A(N^2)$,
$$
f_\mu^{-1} u f_\mu = 1+(Nf_\mu^{-1})r(Nf_\mu) \in R_B.
$$
It lies in $U(B)_\bQ$, hence in $U(B)$. Similarly, $f_\mu \Gamma_B(N^2)f_\mu^{-1}\subseteq U(A)$.
The commensurability then follows from the fact that $\Gamma_X(N^2)$ has finite index in $U(X)$.
\end{proof}

\begin{cor}
\label{cor:isogeny-finite-invariant}
Finiteness of the reduced shifting spectrum is invariant under isogeny.
\end{cor}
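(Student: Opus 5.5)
Let $\mu\colon B\to A$ be an isogeny, and conjugate by $f_\mu$ throughout. By Lemma~\ref{lem:isogeny-cocycle}, conjugation by $f_\mu$ identifies $U(B)_\bQ$ with $U(A)_\bQ$ and satisfies
$$
\overline\tau_A(f_\mu u f_\mu^{-1})=\overline\tau_B(u)
\qquad (u\in U(B)_\bQ).
$$
Hence the reduced shifting spectrum of $A$ equals $\overline\tau_B\bigl(f_\mu^{-1}U(A)f_\mu\bigr)$, computed inside $U(B)_\bQ$. The spectrum of $B$ is $\overline\tau_B(U(B))$, since reduced shifting numbers of autoequivalences depend only on the Orlov element and $\gamma_B$ is surjective onto $U(B)$. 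The problem is therefore reduced to comparing the images under a single function $\overline\tau_B$ of two subgroups of $U(B)_\bQ$. By Lemma~\ref{lem:isogeny-U(A)-U(B)-commensurable}, these two subgroups are commensurable.

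Put $\Gamma=U(B)\cap f_\mu^{-1}U(A)f_\mu$. This has finite index in both $U(B)$ and $f_\mu^{-1}U(A)f_\mu$. We then apply Lemma~\ref{lem:basic-finite-index-finiteness} twice with $r=\overline\tau_B$: first to the pair $\Gamma\subseteq U(B)$, then to the pair $\Gamma\subseteq f_\mu^{-1}U(A)f_\mu$. This gives
$$
\overline\tau_B(U(B))\ \text{finite}
\iff \overline\tau_B(\Gamma)\ \text{finite}
\iff \overline\tau_B(f_\mu^{-1}U(A)f_\mu)\ \text{finite}.
$$
Since isogeny is a symmetric relation (up to choosing the dual isogeny), this proves the corollary.

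The only point to verify is the hypothesis $r(g^n)=nr(g)$ of Lemma~\ref{lem:basic-finite-index-finiteness}. Recall that $\overline\tau_B(u)=\tau_B(u,0)+\bZ$, where $\tau_B$ is the homogenization of the central coordinate on $\widetilde U(B)_\bQ$. In the extension we have $(u,0)^n=(u^n,m_n)$ for some integer $m_n$. Homogeneity of $\tau_B$ then gives
$$
n\,\tau_B(u,0)=\tau_B(u^n,m_n)=\tau_B(u^n,0)+m_n,
$$
using that adding a central integer shifts $\tau_B$ by that integer. Reducing modulo $\bZ$ yields $\overline\tau_B(u^n)=n\,\overline\tau_B(u)$. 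I expect no real obstacle. The only care needed is to note that the big-cell extension in Lemma~\ref{lem:isogeny-cocycle} makes $\overline\tau_B$ a well-defined function on all of $U(B)_\bQ$, so that it can be evaluated on the rational subgroup $f_\mu^{-1}U(A)f_\mu$.
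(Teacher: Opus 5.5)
Your proposal is correct and is essentially the paper's proof, which simply cites Lemmas~\ref{lem:isogeny-cocycle}, \ref{lem:basic-finite-index-finiteness}, and \ref{lem:isogeny-U(A)-U(B)-commensurable}. You make explicit how these combine through $\Gamma=U(B)\cap f_\mu^{-1}U(A)f_\mu$, and you also carry out the homogeneity check $\overline\tau_B(u^n)=n\,\overline\tau_B(u)$ that the paper leaves implicit; your chain of equivalences already gives both directions, so the remark about symmetry of isogeny is not needed.
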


\begin{proof}
This follows from Lemmas~\ref{lem:isogeny-cocycle}, \ref{lem:basic-finite-index-finiteness}, and \ref{lem:isogeny-U(A)-U(B)-commensurable}.
\end{proof}

%--------------------%--------------------%--------------------
\subsection{The finite spectrum case}
\label{subsec:6.2}
Let $B$ be a simple abelian variety with $\rho(B)=1$. Fix a polarization $\lambda\colon B\to\widehat{B}$, and choose $m\geq1$ such that $m\lambda^{-1}\colon\widehat{B}\to B$ is an actual morphism.
The congruence subgroup
$$
\Gamma^0(m)\coloneqq\left\{
\begin{pmatrix}
    a&b\\c&d
\end{pmatrix}
\in\SL_2(\bZ):
b\equiv0 \text{ (mod }m)
\right\}
$$
embeds into $U(B)$ under (\ref{eqn:scalar-embedding}).

\begin{prop}
\label{prop:picard-1-finite-index}
The subgroup $\iota_\lambda(\Gamma^0(m))$ has finite index in $U(B)$.
\end{prop}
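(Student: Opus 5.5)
Since $\rho(B)=1$ and $B$ is simple, $\End^0(B)^\dagger=\bQ$: the Rosati-fixed part is identified with $\NS(B)_\bQ$, which is one-dimensional and spanned by $\lambda$. The plan is to show that $\iota_\lambda(\SL_2(\bQ))$ is a normal subgroup of $U(B)_\bQ$ with abelian (indeed anisotropic, compact-type) quotient whose integral points are finite. Then $\iota_\lambda(\SL_2(\bZ))$ will have finite index in $U(B)$, and hence so will $\iota_\lambda(\Gamma^0(m))$.

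First we describe $U(B)_\bQ$. Write $u=\begin{pmatrix}a&b\\c&d\end{pmatrix}$ and set $\beta=b\lambda\in\End^0(B)$, $\gamma=\lambda^{-1}c\in\End^0(B)$, $\delta=\lambda^{-1}d\lambda$. The conditions $u\widetilde u=\widetilde u u=1$ become identities in the division algebra $D=\End^0(B)$ with positive involution $\dagger$. For example, the off-diagonal condition $a\widehat b=b\widehat a$ reads $a\beta^\dagger=\beta a^\dagger$, and the remaining conditions are similar. Thus $U(B)_\bQ$ is the unitary group $U_2(D,\dagger)$ of the skew-hermitian form $\begin{pmatrix}0&1\\-1&0\end{pmatrix}$ over $D$. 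Its center-type part is $U_1(D)=\{x\in D: xx^\dagger=1\}$, embedded diagonally as $\operatorname{diag}(x,(x^\dagger)^{-1})$ transported to $B\times\widehat B$. Because $\dagger$ is positive, $U_1(D)\otimes\bR$ is compact.

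Next we decompose. Since the fixed field of $\dagger$ on the center $K$ of $D$ is $\bQ$, we have $K=\bQ$ or $K$ imaginary quadratic. I would show that $U(B)_\bQ$ is generated by $\iota_\lambda(\SL_2(\bQ))$ together with the group of elements commuting with it, and that this commutant is $\{\operatorname{diag}(x,\lambda x\lambda^{-1}): x\in D,\ xx^\dagger=1\}$. This follows from the double-commutant structure: the commutant of $\iota_\lambda(M_2(\bQ))$ inside $\End^0(B\times\widehat B)=M_2(D)$ is the scalar copy of $D$. Consequently the product map $\SL_2(\bQ)\times U_1(D)\to U(B)_\bQ$ has image of finite index at the level of algebraic groups, with central kernel $\{\pm1\}$ and finite cokernel measured by a reduced-norm quotient. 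Passing to arithmetic subgroups, $U(B)$ is commensurable with $\iota_\lambda(\SL_2(\bZ))\cdot\Gamma_1$, where $\Gamma_1$ is an arithmetic subgroup of the anisotropic group $U_1(D)$. Since $U_1(D)(\bR)$ is compact, $\Gamma_1$ is a discrete subgroup of a compact group and hence finite. Therefore $\iota_\lambda(\SL_2(\bZ))$ has finite index in $U(B)$, and $\Gamma^0(m)$ has finite index in $\SL_2(\bZ)$.

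The main obstacle is the middle step: proving that $U(B)_\bR$ is, up to finite index and isogeny, $\SL_2(\bR)\times(\text{compact})$, so that nothing noncompact lies outside the $\iota_\lambda$ image. For this I would rely on the classification of the real Lie algebra of $U_2(D,\dagger)$ in Albert types I–IV with $\dagger$-fixed part $\bQ$. Type I forces $D=\bQ$. Type II or III with $D$ a definite or indefinite quaternion algebra over $\bQ$ gives $\mathfrak{sp}$ or $\mathfrak{so}^*$ forms. Type IV gives a unitary group over an imaginary quadratic $K$. I would carry out an explicit dimension count of the Lie algebra: it should equal $\dim\mathfrak{sl}_2\oplus\dim\mathfrak{u}_1(D)$ only if the fixed part is one-dimensional, which it is here. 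If that count fails, I would fall back on an alternative route to finiteness of the index. By Borel–Harish-Chandra, a lattice in $U(B)_\bR$ is enough, and I would compare covolumes of $\iota_\lambda(\SL_2(\bZ))$ and $U(B)$ inside the product decomposition.
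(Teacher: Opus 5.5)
Your route differs from the paper's and its skeleton is sound. However, one claim in your middle paragraph is false, and the passage to arithmetic subgroups needs a different justification.

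First, the step you call the main obstacle closes uniformly, with no Albert case analysis. In your $D$-coordinates the Lie algebra of $U_2(D,\dagger)$ consists of the matrices $\begin{pmatrix}p&q\\ r&-p^\dagger\end{pmatrix}$ with $q=q^\dagger$ and $r=r^\dagger$. It therefore has dimension $\dim_\bQ D+2\dim_\bQ D^\dagger$. By contrast, $\mathfrak{sl}_2\oplus\mathfrak{u}_1(D)$ (a direct sum, since the two commute and meet trivially) has dimension $3+\dim_\bQ D-\dim_\bQ D^\dagger$. These agree exactly when $\dim_\bQ D^\dagger=\rho(B)=1$. So $\phi(h,x)=\iota_\lambda(h)\operatorname{diag}(x,\lambda x\lambda^{-1})$ is an isogeny from $\SL_2\times U_1(D)$ onto the identity component of $G=U(B)_\bQ$, with kernel $\{\pm(1,1)\}$.

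The gap is what you do next. It is not true that $U(B)_\bQ$ is generated by $\iota_\lambda(\SL_2(\bQ))$ and its centralizer, nor that the cokernel on rational points is finite. For an isogeny with kernel $\mu_2$, that cokernel embeds in $H^1(\bQ,\mu_2)=\bQ^\times/\bQ^{\times 2}$ and is typically infinite. For example, take $D=K$ imaginary quadratic. The elements $\operatorname{diag}(y,\bar y^{-1})$ with $y\in K^\times$ (in $D$-coordinates) lie in $U(B)_\bQ$ with determinant $y/\bar y$, which by Hilbert 90 runs over all of $K^1=\{z\in K^\times: z\bar z=1\}$. The image of $\phi$ only has determinants in $(K^1)^2$, and $K^1/(K^1)^2$ is infinite. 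So commensurability of $U(B)$ with $\iota_\lambda(\SL_2(\bZ))\cdot\Gamma_1$ cannot come from a statement about $\bQ$-points.

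You need one of the following two inputs instead.
\begin{enumerate}[label=(\roman*)]
\item Borel's theorem that a surjective morphism of $\bQ$-groups carries arithmetic subgroups to arithmetic subgroups. Apply it to $\phi$, using that $U(B)=G(\bQ)\cap GL(H_1(B\times\widehat B,\bZ))$ is arithmetic and that $G/G^0$ is finite.
\item A direct real argument. The map $\phi$ sends $\SL_2(\bR)\times U_1(D)_\bR$ onto a finite-index open subgroup $G_1\subseteq G(\bR)$, and $(h,x)\mapsto[h]$ induces $G_1\to\PSL_2(\bR)$ with compact kernel. Hence the discrete group $U(B)\cap G_1$, which has finite index in $U(B)$, maps to a discrete subgroup of $\PSL_2(\bR)$ containing the lattice image of $\Gamma^0(m)$, so that index is finite.
\end{enumerate}
Also, $\iota_\lambda(\SL_2(\bZ))$ need not lie in $U(B)$, since $b\lambda^{-1}$ need not be integral. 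The correct intermediate conclusion is commensurability, not finite index.

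Argument (ii) is exactly the paper's proof. The only difference is that the paper obtains the compact-kernel map $\pi_B\colon U(B)_\bR\to\PSL_2(\bR)$ directly from the action on $D_B\cong\bH$ and compactness of the stabilizer of $i\lambda$ \cite{GLO}*{Theorem~8.2}, without computing $U(B)_\bQ$ at all. Your approach costs more machinery but yields more: an explicit isogeny $\SL_2\times U_1(D)\to U(B)^0$ precisely when $\rho(B)=1$.
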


\begin{proof}
Since $\rho(B)=1$, the polarization $\lambda$ identifies the complexified ample cone with the upper half-plane:
$$
\bH\cong D_B=\NS(B)_\bR+i\Amp(B), \qquad z\mapsto z\lambda.
$$
The natural action of $U(B)_\bR$ on $D_B$ induces a homomorphism of Lie groups
$$
\pi_B\colon U(B)_\bR
\to
\Aut_{\mathrm{hol}}(\bH)
\cong\PSL_2(\bR).
$$
For $h=\begin{pmatrix}a&b\\c&d\end{pmatrix}\in\SL_2(\bR)$,
the scalar embedding satisfies
\[
\iota_\lambda(h)\cdot(z\lambda)
=
\frac{c+dz}{a+bz}\lambda.
\]
Thus, under the standard identification of
$\operatorname{Aut}_{\mathrm{hol}}(\bH)$ with $\PSL_2(\bR)$,
\[
q(h)\coloneqq\pi_B(\iota_\lambda(h))
=
\left[
\begin{pmatrix}
d&c\\
b&a
\end{pmatrix}
\right].
\]
In particular, $q$ is a two-sheeted covering homomorphism, and $\pi_B$ is surjective.

By \cite{GLO}*{Theorem~8.2}, the stabilizer of $i\lambda\in D_B$ in $U(B)_\bR$ is compact.
The kernel $K\coloneqq\ker\pi_B$ is a closed subgroup of this stabilizer, hence is compact.
Moreover, $\pi_B$ is proper.
Indeed, for every compact subset $C\subseteq\PSL_2(\bR)$,
the set $q^{-1}(C)$ is compact and
$$
\pi_B^{-1}(C)
=
K\,\iota_\lambda\bigl(q^{-1}(C)\bigr),
$$
which is compact.

The faithful homology representation identifies $U(B)$ with a closed discrete subgroup of $U(B)_\bR$, since its elements preserve the lattice
$H_1(B\times\widehat B,\bZ)\cong\bZ^{4g}$.
Since $\pi_B$ is proper, its image $\Delta\coloneqq\pi_B(U(B))$ is discrete, and $F\coloneqq U(B)\cap\ker\pi_B$ is finite.

The formula for $q$ identifies
$$
\Lambda\coloneqq\pi_B\bigl(\iota_\lambda(\Gamma^0(m))\bigr)
$$
with the image in $\PSL_2(\bR)$ of
$$
\Gamma_0(m)
=
\left\{
\begin{pmatrix}a&b\\c&d\end{pmatrix}\in\SL_2(\bZ)
:\ c\equiv0\pmod m
\right\}.
$$
Hence $\Lambda$ is a finite-index subgroup of $\PSL_2(\bZ)$ and is therefore a lattice in $\PSL_2(\bR)$.
Since $\Delta$ is a discrete subgroup of $\PSL_2(\bR)$ containing $\Lambda$,
the standard finite-index property of lattices gives $[\Delta:\Lambda]<\infty$.
Consequently,
$$
[U(B):\iota_\lambda(\Gamma^0(m))] \leq |F|\cdot [\Delta:\Lambda] < \infty.
$$
\end{proof}

\begin{thm}
\label{thm:condition-star-satisfy}
If $A$ satisfies Condition~($\star$), then the reduced shifting spectrum $\Sigma^\red(A)$ is finite.
\end{thm}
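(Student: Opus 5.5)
The plan is to reduce, via Corollary~\ref{cor:isogeny-finite-invariant}, to the case where $A$ is itself a product, and then to split the problem into its simple factors. First note that $\Sigma^\red(A)=\overline\tau_A(U(A))$, because Orlov's homomorphism $\gamma_A$ is surjective. Condition~$(\star)$ gives an isogeny $\mu\colon B=B_1\times\cdots\times B_s\to A$ with the $B_i$ simple, pairwise nonisogenous, and of Picard number one. By Corollary~\ref{cor:isogeny-finite-invariant} it suffices to show that $\overline\tau_B(U(B))$ is finite.

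\textbf{Step 1: $U(B)$ splits as a product.} Each $\widehat B_j$ is isogenous to $B_j$. Since the $B_i$ are simple and pairwise nonisogenous, we have $\Hom(B_i,B_j)=\Hom(B_i,\widehat B_j)=\Hom(\widehat B_i,B_j)=\Hom(\widehat B_i,\widehat B_j)=0$ for $i\neq j$. Hence every element of $\End(B\times\widehat B)$ is block diagonal with respect to the decomposition $B\times\widehat B=\prod_i(B_i\times\widehat B_i)$. The condition $\widetilde u=u^{-1}$ can then be checked blockwise, which gives $U(B)=\prod_i U(B_i)$. Lemma~\ref{lem:product-cocycle} then yields
$$
\overline\tau_B\Bigl(\bigoplus_i u_i\Bigr)=\sum_i\overline\tau_{B_i}(u_i),
$$
so $\overline\tau_B(U(B))\subseteq\sum_i\overline\tau_{B_i}(U(B_i))$. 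A finite sum of finite subsets of $\bR/\bZ$ is finite, so it suffices to treat a single factor.

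\textbf{Step 2: a single factor.} Fix a polarization $\lambda$ of $B_i$ and choose $m$ as in Proposition~\ref{prop:picard-1-finite-index}, so that $\iota_\lambda(\Gamma^0(m))$ has finite index in $U(B_i)$. By Lemma~\ref{lem:scalar-embedding-cocycle},
$$
\overline\tau_{B_i}(\iota_\lambda(h))=\dim(B_i)\,\rot(h),
$$
and by Lemma~\ref{lem:SL2-coycle-rotation} we have $\rot(h)\in\{0,\tfrac12,\tfrac13,\tfrac23\}$ for $h\in\SL_2(\bZ)$. So $\overline\tau_{B_i}$ takes finitely many values on this finite-index subgroup.

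To pass to all of $U(B_i)$, we apply Lemma~\ref{lem:basic-finite-index-finiteness} with $r=\overline\tau_{B_i}$. This requires $r(u^n)=nr(u)$. That identity holds because $\overline\tau$ is the reduction of the homogenization $\tau_{B_i}(u,0)$, and $(u,0)^n=(u^n,k)$ for some integer $k$, so
$$
\tau(u^n,0)=n\,\tau(u,0)-k\equiv n\,\tau(u,0)\pmod{\bZ}.
$$

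\textbf{Main obstacle.} The only delicate point is Step~1: one must verify that the vanishing of the cross homomorphisms, including those involving the dual varieties, really forces $U(B)$ to be the full product of the $U(B_i)$. This uses that $\widehat B_j\sim B_j$ is nonisogenous to $B_i$ for $i\neq j$. Everything else follows directly from the lemmas already established.
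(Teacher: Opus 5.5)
Your proposal is correct and matches the paper's proof step for step. Like the paper, you use isogeny invariance (Corollary~\ref{cor:isogeny-finite-invariant}), the splitting $U(B)=\prod_i U(B_i)$ combined with Lemma~\ref{lem:product-cocycle}, and, for each factor, the finite-index subgroup $\iota_\lambda(\Gamma^0(m))$ together with Lemmas~\ref{lem:scalar-embedding-cocycle}, \ref{lem:SL2-coycle-rotation}, and \ref{lem:basic-finite-index-finiteness}. You also explicitly check the hypothesis $\overline\tau(u^n)=n\,\overline\tau(u)$ needed for Lemma~\ref{lem:basic-finite-index-finiteness}, which the paper leaves implicit.
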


\begin{proof}
For a simple factor $B$ of Picard number one, Lemmas~\ref{lem:SL2-coycle-rotation} and \ref{lem:scalar-embedding-cocycle} give a finite reduced spectrum on $\iota_\lambda(\Gamma^0(m))$.
Proposition~\ref{prop:picard-1-finite-index} and Lemma~\ref{lem:basic-finite-index-finiteness} give finiteness on $U(B)$.

Let $A$ be an abelian variety satisfying Condition~($\star$), i.e.~it is isogenous to $B=\prod_i B_i$, where the $B_i$'s are pairwise non-isogenous simple abelian varieties of Picard number one.
Then homomorphisms between distinct factors and their duals vanish, so $U(B)\cong\prod_i U(B_i)$.
Then, by Lemma~\ref{lem:product-cocycle}, $\Sigma^\red(B)$ is a sum of finitely many finite sets, therefore finite.
The theorem then follows from Corollary~\ref{cor:isogeny-finite-invariant}.
\end{proof}

%--------------------%--------------------%--------------------
\subsection{A Rosati-fixed field on a polarized factor}
\label{subsec:6.3}

Let $(B,\lambda_B)$ be a polarized abelian variety, and let $L\subseteq\End^0(B)^\dagger$ be a Rosati-fixed number field.
Then there is an embedding
\begin{equation}
\label{eqn:SL2(L)-U(B)}
\SL_2(L)\hookrightarrow U(B)_\bQ, \qquad
h=\begin{pmatrix}
    a & b \\ c & d 
\end{pmatrix}
\mapsto
T_h=\begin{pmatrix}
    a & b \lambda_B^{-1} \\ \lambda_B c & \lambda_B d \lambda_B^{-1}
\end{pmatrix}.
\end{equation}
When $L=\bQ$, this recovers the scalar embedding (\ref{eqn:scalar-embedding}).

The $L$-vector space $V=H_1(B,\bQ)$ gives a decomoposition
$$
V_\bR=\bigoplus_{\sigma\colon L\hookrightarrow\bR} V_\sigma, \qquad
V_\sigma=V\otimes_{L,\sigma}\bR.
$$
The complex structure preserves each summand;
we have $\dim_\bC V_\sigma=\frac{\dim B}{[L:\bQ]}\in\bZ_{>0}$.

\begin{lemma}
\label{lem:pb-lambda-v}
For every $v\in L\otimes_\bQ \bR$, $\lambda_Bv$ is real symmetric homomorphism, and
$$
p_B(\lambda_B v)=\frac{\dim B}{[L:\bQ]}\#\{\sigma\colon L\hookrightarrow\bR \ |\  \sigma(v)>0\}.
$$
\end{lemma}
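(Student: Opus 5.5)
The plan is to reduce the computation of $p_B(\lambda_B v)$ to the Hermitian form $H_{\lambda_B v}$, decompose it along the summands $V_\sigma$, and read off its signature from the scalars $\sigma(v)$.

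\emph{Symmetry.} Take $v\in L\otimes_\bQ\bR\subseteq\End^0(B)^\dagger_\bR$, which is Rosati-fixed: $v^\dagger=\lambda_B^{-1}\widehat v\lambda_B=v$. Hence
$$
\widehat{\lambda_B v}=\widehat v\,\widehat{\lambda_B}=\widehat v\lambda_B=\lambda_B v,
$$
using that $\lambda_B$ is symmetric. So $\lambda_Bv$ is a real symmetric homomorphism $B\to\widehat B$. By the definition of $H_q$, with $q=\lambda_Bv$ and the polarization $\lambda_B$ coming from an ample $H$, we get $H_q(x,y)=H_H(vx,y)$. Since $v^\dagger=v$, the operator $v$ is self-adjoint for the positive definite Hermitian form $H_H$, and it is $\bC$-linear because it comes from an endomorphism. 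Therefore $p_B(q)$ is the number of positive eigenvalues of $v$ acting on $(V_\bR,J)$, counted with complex multiplicity.

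\emph{Eigenvalues via the decomposition.} By construction $L\otimes\bR=\prod_\sigma\bR$ acts on $V_\sigma=V\otimes_{L,\sigma}\bR$ through the scalar $\sigma(v)$. So $v$ acts on $V_\sigma$ as multiplication by the real number $\sigma(v)$. The summands $V_\sigma$ are $J$-stable, since $J$ commutes with $L$, and they are mutually $H_H$-orthogonal, since $v$ is self-adjoint with distinct eigenvalue characters on them. Hence the positive part of $H_q$ is exactly $\bigoplus_{\sigma(v)>0}V_\sigma$. Its complex dimension is $\#\{\sigma:\sigma(v)>0\}\cdot\dim_\bC V_\sigma$.

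\emph{Equal dimensions.} The remaining input is $\dim_\bC V_\sigma=\dim B/[L:\bQ]$, which is asserted just before the lemma. To justify it: $V$ is free over $L$ of rank $r=2\dim B/[L:\bQ]$, so $\dim_\bR V_\sigma=r$ for every $\sigma$, and $J$-stability makes each $V_\sigma$ complex of dimension $r/2$. Substituting this into the count gives the stated formula.

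The main obstacle is bookkeeping rather than substance: checking that the identification of $p_B$ with the positive index of $H_q$ matches the sign conventions ($H_L(v,w)=E_L(Jv,w)+iE_L(v,w)$), and that $J$ preserves each $V_\sigma$. The latter holds because $L$ acts by holomorphic endomorphisms, so its action commutes with $J$.
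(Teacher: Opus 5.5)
Your proof is correct and follows the paper's own route: rewrite $H_{\lambda_Bv}$ as $H_{\lambda_B}(v\,\cdot,\cdot)$, observe that on each $V_\sigma$ it equals $\sigma(v)H_{\lambda_B}|_{V_\sigma}$, and count positive directions using positive definiteness of $H_{\lambda_B}$ and $\dim_\bC V_\sigma=\dim B/[L:\bQ]$. You also spell out two points the paper leaves implicit, namely the $H_{\lambda_B}$-orthogonality of the summands $V_\sigma$ and why they all have this dimension.
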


\begin{proof}
It follows from $L$ is Rosati-fixed that $\lambda_Bv$ is symmetric.
The associated Hermitian form is
$$
H_{\lambda_B v}(w_1,w_2)=H_{\lambda_B}((\lambda_B^{-1}\lambda_Bv)w_1,w_2)=H_{\lambda_B}(vw_1,w_2).
$$
Hence, on $V_\sigma$ the Hermitian form $H_{\lambda_Bv}$ is given by $\sigma(v) H_{\lambda_B}|_{V_\sigma}$.
The lemma then follows from the fact that $H_{\lambda_B}$ is positive definite and that $V_\sigma$ has complex dimension $\frac{\dim B}{[L:\bQ]}$.
\end{proof}

The following proposition is a generalization of Lemma~\ref{lem:scalar-embedding-cocycle}.

\begin{prop}
\label{prop:SL2(L)-cocycle}
For $h_1,h_2\in\SL_2(L)$,
$$
c_B(T_{h_1},T_{h_2})=\frac{\dim B}{[L:\bQ]}\sum_{\sigma\colon L\hookrightarrow\bR} \kappa(\sigma(h_1),\sigma(h_2)).
$$
Hence
$$
\overline{\tau}_B(T_h)=\frac{\dim B}{[L:\bQ]}\sum_{\sigma\colon L\hookrightarrow\bR} \rot(\sigma(h)) \qquad \text{in } \bR/\bZ.
$$
\end{prop}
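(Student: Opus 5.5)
The argument follows the template of Lemma~\ref{lem:scalar-embedding-cocycle}. We first compare the two cocycles on a big subset via (\ref{eqn:Orlov-cocycle-formula}), then extend to all of $\SL_2(L)$ using Lemma~\ref{lem:big-subset-extension}, and finally homogenize.

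\textbf{Step 1: $h\mapsto T_h$ is a homomorphism into $U(B)_\bQ$.} One checks that $T_h$ satisfies $\widetilde{T_h}=T_h^{-1}$. This uses that $L$ is Rosati-fixed, so $\widehat{x}=\lambda_B x\lambda_B^{-1}$ for $x\in L$. Multiplicativity is the computation
$$
\begin{pmatrix}a_1&b_1\lambda_B^{-1}\\ \lambda_Bc_1&\lambda_Bd_1\lambda_B^{-1}\end{pmatrix}\begin{pmatrix}a_2&b_2\lambda_B^{-1}\\ \lambda_Bc_2&\lambda_Bd_2\lambda_B^{-1}\end{pmatrix},
$$
which holds because $L$ is commutative.

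\textbf{Step 2: compare the cocycles on the big subset.} Take
$$
\mathcal B=\Bigl\{h\in\SL_2(L):\sigma(b)\neq0\text{ for all }\sigma\Bigr\}.
$$
This is just $\{b\neq0\}$, since $L$ is a field. For $h\in\mathcal B$, the block $b\lambda_B^{-1}$ is invertible, so $T_h\in U(B)_\bQ^0$. For $h_1,h_2,h_1h_2\in\mathcal B$, the symmetric homomorphism in (\ref{eqn:Orlov-cocycle-formula}) is
$$
(b_1\lambda_B^{-1})^{-1}(b_{12}\lambda_B^{-1})(b_2\lambda_B^{-1})^{-1}=\lambda_B\, b_1^{-1}b_{12}b_2^{-1},
$$
again using commutativity of $L$. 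Apply Lemma~\ref{lem:pb-lambda-v} with $v=b_{12}/(b_1b_2)\in L$, and write $k=\dim B/[L:\bQ]$. Since $g=\dim B=k\cdot\#\{\sigma\}$, we get
$$
c_B(T_{h_1},T_{h_2})=k\sum_\sigma\bigl(\mathbf 1_{\{\sigma(b_{12})/(\sigma(b_1)\sigma(b_2))>0\}}-1\bigr)=k\sum_\sigma\kappa(\sigma h_1,\sigma h_2),
$$
which is exactly (\ref{eq:rank-one-cocycle}).

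\textbf{Step 3: extend to all of $\SL_2(L)$.} This is the step I expect to be the main obstacle. We need $\mathcal B$ to be big in $\SL_2(L)$. I would first try to adapt Polishchuk's argument for $U(A)_\bQ^0$: it is the complement of a proper Zariski-closed condition in the irreducible variety $\mathrm{Res}_{L/\bQ}\SL_2$, whose rational points are Zariski dense. Hence the intersection of four translates of the Zariski-open set $\{b\neq0\}$ (and its inverse, which is the same set) is nonempty. If that fails, a fallback is to regard $\mathcal B$ as the pullback of $U(B)^0_\bQ$ under the embedding and compare the cocycles directly.

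Once $\mathcal B$ is big, note that both sides are $2$-cocycles on $\SL_2(L)$: the left side is pulled back from $c_B$, and the right side is a sum of pullbacks of $\kappa$ along the embeddings $\sigma$. Lemma~\ref{lem:big-subset-extension} and the remark following it then give the identity everywhere. (Strictly, the lemma requires agreement only when $b_1b_2\in B$, which is what Step 2 provides.)

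\textbf{Step 4: homogenize.} The cocycle identity gives a homomorphism
$$
\SL_2(L)\times_{c}\bZ\;\longrightarrow\;\prod_\sigma G_\kappa\big/\text{(antidiagonal relations)},
$$
compatible with central coordinates via $m\mapsto\sum_\sigma m_\sigma$ after scaling by $k$. More concretely, $(h,m)\mapsto(T_h,m)\in\widetilde U(B)_\bQ$, while the function $(h,m)\mapsto m$ is, up to bounded error, $k\sum_\sigma\ell_\kappa$ of the lifts $(\sigma h,0)$. The cocycle identity means that the map $(h,0)\mapsto\bigl((\sigma h,0)\bigr)_\sigma$ has central coordinates obeying the same law. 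Homogenizing along powers $h^n$ gives
$$
\tau_B(T_h,0)=k\sum_\sigma\tau_\kappa(\sigma h,0).
$$
Reducing modulo $\bZ$ and applying Lemma~\ref{lem:SL2-coycle-rotation} yields the formula for $\overline\tau_B(T_h)$.
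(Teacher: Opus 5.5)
Your proposal is correct and is essentially the paper's proof: Orlov's formula (\ref{eqn:Orlov-cocycle-formula}) together with Lemma~\ref{lem:pb-lambda-v} on the big cell, then Lemma~\ref{lem:big-subset-extension}, then homogenization via Lemma~\ref{lem:SL2-coycle-rotation}. The one difference is where bigness is checked. The paper runs the comparison on $\prod_\sigma\SL_2(\bR)\to U(B)_\bR$, where bigness of $\{\sigma(b)\neq0\ \forall\sigma\}$ follows at once from the Remark (the $\SL_2$ case plus closure under products), and then restricts to $\SL_2(L)$. Your Zariski-density argument in $\mathrm{Res}_{L/\bQ}\SL_2$ works directly in $\SL_2(L)$ and is also valid.

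One small correction: drop the condition $h_1h_2\in\mathcal B$ in Step~2. Your computation already holds for all $h_1,h_2\in\mathcal B$, since the case $b_{12}=0$ (that is, $v=0$) is covered by (\ref{eqn:Orlov-cocycle-formula}), (\ref{eq:rank-one-cocycle}) and Lemma~\ref{lem:pb-lambda-v}. You need that full agreement on $\mathcal B\times\mathcal B$ for the remark after Lemma~\ref{lem:big-subset-extension} to give equality of the cocycles everywhere. Agreement only when the product lies in $\mathcal B$ yields the isomorphism of extensions, but not $c=c'$: two such cocycles can differ by $\delta\phi$ with $\phi$ supported off $\mathcal B$.
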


\begin{proof}
Work first on $\prod_\sigma\SL_2(\bR)$ with two inputs whose upper-right entries $b_1,b_2$ are nonzero. By (\ref{eqn:Orlov-cocycle-formula}) and Lemma~\ref{lem:pb-lambda-v},
\begin{align*}
    c_B(T_{h_1},T_{h_2}) &=  p_B(\lambda_B b_1^{-1}b_{12}b_2^{-1})-\dim(B) \\
    & = \frac{\dim B}{[L:\bQ]} \# \{\sigma\colon L\hookrightarrow\bR \ | \ \sigma(b_1^{-1}b_{12}b_2^{-1})>0\}-\dim(B) \\
    & = \frac{\dim B}{[L:\bQ]} \sum_{\sigma\colon L\hookrightarrow\bR} \kappa(\sigma(h_1),\sigma(h_2)).
\end{align*}
Lemma~\ref{lem:big-subset-extension} extends this identity to all pairs.
Homogenization and Lemma~\ref{lem:SL2-coycle-rotation} prove the second equality.
\end{proof}

\begin{lemma}
\label{lemma:theta-pi-irrational}
Suppose $\beta=2\cos\theta$ is algebraic, with $0<\theta<\pi$, and has a conjugate $\beta'$ with $|\beta'|>2$.
Then $\theta/\pi$ is transcendental.
\end{lemma}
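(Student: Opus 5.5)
We argue by contradiction, splitting according to whether $\theta/\pi$ is rational, algebraic irrational, or transcendental, and ruling out the first two cases.

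\emph{Case $\theta/\pi$ rational.} Suppose $\theta/\pi=p/q\in\bQ$. Then $\zeta=e^{i\theta}$ is a root of unity and $\beta=\zeta+\zeta^{-1}$. The element $\zeta$ is algebraic over $\bQ(\beta)$, since it is a root of $x^2-\beta x+1$. Any embedding $\sigma$ of $\bQ(\beta)$ into $\bC$ with $\sigma(\beta)=\beta'$ therefore extends to an embedding of $\bQ(\zeta)$. That extension sends $\zeta$ to another root of unity $\zeta'$, and since the two roots of $x^2-\beta' x+1$ multiply to $1$, it sends $\zeta^{-1}$ to $\zeta'^{-1}$. Hence $\beta'=\zeta'+\zeta'^{-1}$, and because $|\zeta'|=1$ we get $|\beta'|\leq 2$. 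This contradicts the hypothesis $|\beta'|>2$.

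\emph{Case $\theta/\pi$ algebraic irrational.} Suppose $\theta/\pi$ is algebraic but irrational. The number $\zeta=e^{i\theta}$ is algebraic, being a root of $x^2-\beta x+1$ with $\beta$ algebraic. We can also write $\zeta=(-1)^{\theta/\pi}=e^{i\pi\cdot\theta/\pi}$, where the power is taken with the principal logarithm $\log(-1)=i\pi$. By the Gelfond--Schneider theorem (already cited in Corollary~\ref{cor:shifting-rational-or-transcendental}), this value is transcendental, because $-1\neq0,1$ is algebraic and the exponent is algebraic irrational. This contradicts the algebraicity of $\zeta$.

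Both cases lead to contradictions, so $\theta/\pi$ is transcendental. The only delicate point is in the first case: one must check that conjugating $\beta$ really corresponds to conjugating $\zeta$ to another root of unity. This holds because $\bQ(\zeta)$ is a finite extension of $\bQ(\beta)$ and every embedding of the smaller field extends to the larger one. We also use that $0<\theta<\pi$ ensures $\zeta\neq\pm1$; this is not essential, however, since the argument works for any root of unity. The Gelfond--Schneider step is routine once the principal branch is fixed.
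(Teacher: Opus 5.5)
Your proof is correct and takes the same route as the paper. You rule out rational $\theta/\pi$ because every conjugate of $\zeta+\zeta^{-1}$, for a root of unity $\zeta$, lies in $[-2,2]$; you then rule out algebraic irrational $\theta/\pi$ by applying Gelfond--Schneider to $e^{i\theta}=(-1)^{\theta/\pi}$, which is algebraic as a root of $x^2-\beta x+1$, and your care about extending embeddings from $\bQ(\beta)$ to $\bQ(\zeta)$ only spells out what the paper states in one line.
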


\begin{proof}
If $\theta/\pi$ were rational, then $\alpha=e^{i\theta}$ would be a root of unity.
Every conjugate of $\beta=\alpha+\alpha^{-1}$ would then be the sum of a root of unity and its inverse, and hence would lie in $[-2,2]$.
Thus, $\theta/\pi$ is irrational.
Since $\alpha=e^{\pi i(\theta/\pi)}$ is algebraic, the Gelfond--Schneider theorem implies that the irrational number $\theta/\pi$ cannot be algebraic; hence it is transcendental.
\end{proof}

\begin{prop}
\label{prop:tauB-irrational}
Suppose $[L:\bQ]>1$ and $h\in\SL_2(L)$ has exactly one elliptic real embedding and other embeddings are hyperbolic:
$$
|\sigma_0(\tr h)|<2, \qquad |\sigma(\tr h)|>2 \quad (\sigma\neq\sigma_0).
$$
Then $\overline{\tau}_B(T_h)$ is transcendental.
\end{prop}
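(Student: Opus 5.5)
The plan is to evaluate $\overline{\tau}_B(T_h)$ explicitly with Proposition~\ref{prop:SL2(L)-cocycle}, and then reduce transcendence to Lemma~\ref{lemma:theta-pi-irrational}. Put $k=\dim B/[L:\bQ]$, which is a positive integer. Proposition~\ref{prop:SL2(L)-cocycle} gives
$$
\overline{\tau}_B(T_h)=k\sum_{\sigma\colon L\hookrightarrow\bR}\rot(\sigma(h)) \qquad\text{in }\bR/\bZ .
$$
I will treat the hyperbolic embeddings and the single elliptic embedding separately.

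First, the hyperbolic embeddings. For $\sigma\neq\sigma_0$ we have $|\tr\sigma(h)|>2$, so $\sigma(h)\in\SL_2(\bR)$ has two distinct real eigenvalues. In particular it has a real invariant line. Lemma~\ref{lem:SL2-coycle-rotation} then gives $\rot(\sigma(h))=0$.

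Second, the elliptic embedding. Since $|\tr\sigma_0(h)|<2$, the matrix $\sigma_0(h)$ has eigenvalues $e^{\pm i\theta}$ with $0<\theta<\pi$ and $2\cos\theta=\sigma_0(\tr h)$. Its upper-right entry $b$ is nonzero, because if $b=0$ the matrix would be triangular with real eigenvalues. So the elliptic formula of Lemma~\ref{lem:SL2-coycle-rotation} applies and gives $\rot(\sigma_0(h))=\pm\theta/\pi$ modulo $\bZ$, the sign being $-\operatorname{sgn}(b)$. Combining the two cases,
$$
\overline{\tau}_B(T_h)=\pm k\,\theta/\pi+\bZ .
$$

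Third, transcendence of $\theta/\pi$. Set $\beta=2\cos\theta=\sigma_0(\tr h)$. This is algebraic, since $\tr h\in L$. Because $[L:\bQ]>1$, there is another real embedding $\sigma\neq\sigma_0$. Then $\sigma(\tr h)$ is a Galois conjugate of $\beta$ (both are images of $\tr h$), and it satisfies $|\sigma(\tr h)|>2$ by hypothesis. Lemma~\ref{lemma:theta-pi-irrational} now shows that $\theta/\pi$ is transcendental. Every representative of $\overline{\tau}_B(T_h)$ has the form $\pm k\theta/\pi+n$ with $n\in\bZ$ and $k\neq0$. Such a number is transcendental, since otherwise $\theta/\pi$ would be algebraic. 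This proves the claim, interpreting transcendence of a class in $\bR/\bZ$ as transcendence of any (equivalently every) real representative.

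The only point requiring care is the conjugacy step. One has to make sure that the element with $|\cdot|>2$ is a genuine Galois conjugate of $\beta$ over $\bQ$. This is automatic: $\sigma(\tr h)$ and $\sigma_0(\tr h)$ are images of the same element of $L$ under two embeddings, so they share the same minimal polynomial over $\bQ$. The rest is a direct assembly of earlier results.
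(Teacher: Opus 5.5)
Your proposal is correct and follows the paper's own argument. It uses zero rotation at the hyperbolic embeddings, rotation $\pm\theta/\pi$ at the elliptic one, transcendence of $\theta/\pi$ via Lemma~\ref{lemma:theta-pi-irrational} applied to a conjugate of $\sigma_0(\tr h)$ outside $[-2,2]$, and assembly through Proposition~\ref{prop:SL2(L)-cocycle}. The extra details you supply are points the paper leaves implicit: the upper-right entry of $\sigma_0(h)$ is nonzero, the factor $\dim B/[L:\bQ]$ is a nonzero integer, and transcendence of a class in $\bR/\bZ$ means transcendence of every representative.
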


\begin{proof}
Write $\sigma_0(\tr h)=2\cos\theta$ with $0<\theta<\pi$.
Any other embedding gives a conjugate outside $[-2,2]$, so $\theta/\pi$ is transcendental by Lemma~\ref{lemma:theta-pi-irrational}.
The hyperbolic factos have zero rotation, while the elliptic factor has rotation $\pm\theta/\pi$.
The conclusion then follows from Proposition~\ref{prop:SL2(L)-cocycle}.
\end{proof}

Proposition~\ref{prop:tauB-irrational}, together with Proposition~\ref{prop:Rosati-fixed-condition-star-failed} below, will be used to construct autoequivalences with transcendental shifting numbers when Condition~$(\star)$ fails.

\begin{lemma}
\label{lem:Rosati-fixed-field-picard>1}
If $B$ is simple and $\rho(B)>1$, then there is a number field $\bQ\subsetneq L\subseteq\End^0(B)^\dagger$.
\end{lemma}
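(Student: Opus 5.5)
Since $B$ is simple, $D\coloneqq\End^0(B)$ is a division algebra of finite dimension over $\bQ$, and by the isomorphism $\NS(B)_\bQ\cong\End^0(B)^\dagger$ we have $\dim_\bQ\End^0(B)^\dagger=\rho(B)>1$. So there is some $x\in\End^0(B)^\dagger$ with $x\notin\bQ\cdot 1$. The plan is to take $L=\bQ(x)$ and check that it is a number field lying inside $\End^0(B)^\dagger$ and strictly larger than $\bQ$.

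First, $\bQ[x]\subseteq D$ is a commutative subring of finite dimension over $\bQ$. Because $D$ is a division algebra, $\bQ[x]$ has no zero divisors, so it is a field. Hence $L=\bQ(x)=\bQ[x]$ is a number field, and $L\neq\bQ$ since $x\notin\bQ$.

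Second, we need $L\subseteq\End^0(B)^\dagger$. The subspace of Rosati-fixed elements is not closed under products in general. It is, however, closed under products of commuting elements: $(yz)^\dagger=z^\dagger y^\dagger=zy=yz$. Every element of $L$ is a polynomial in $x$ with rational coefficients. Since $x^\dagger=x$, $1^\dagger=1$, and $\dagger$ is $\bQ$-linear, induction on the degree gives $(x^k)^\dagger=(x^\dagger)^k=x^k$. Therefore every element of $\bQ[x]$ is Rosati-fixed. The main point to state carefully is this commutativity observation, which is what makes the inclusion work; everything else is immediate. By Lemma~\ref{lem:totally-real-L}, $L$ is automatically totally real, although the statement does not need this.

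The one place simplicity is really used is to make $\bQ[x]$ a field rather than a product of fields. This is worth remarking on, because without simplicity one would instead pass to a factor of the decomposition.
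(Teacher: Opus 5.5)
Your proposal is correct and follows essentially the same argument as the paper: choose a non-scalar Rosati-fixed $x$ using $\NS(B)_\bQ\cong\End^0(B)^\dagger$, use that $\End^0(B)$ is a division algebra to make $\bQ[x]$ a field properly containing $\bQ$, and observe that polynomials in a self-adjoint element are self-adjoint. You also spell out why that last step works: $\dagger$ is an anti-involution, so a product is guaranteed self-adjoint only when its self-adjoint factors commute, and powers of $x$ do. The paper leaves this point implicit.
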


\begin{proof}
Recall that $\NS(B)_\bQ\cong\End^0(B)^\dagger$, so $\rho(B)>1$ implies that there exists a non-scalar element $x\in\End^0(B)^\dagger$.
Since $B$ is simple, $\End^0(B)$ is a division algebra.
Then $L\coloneqq\bQ[x]$ is a field that strictly contains $\bQ$, and is contained in $\End^0(B)^\dagger$ since every polynomial in $x$ is self-adjoint.
\end{proof}

\begin{lemma}
\label{lem:Rosati-fixed-field-two-copies}
Let $B_1,B_2$ be isogenous simple abelian varieties of Picard number one, with polarizations $\lambda_i$.
For the product polarization on $B=B_1\times B_2$, there is a number field $\bQ\subsetneq L\subseteq\End^0(B)^\dagger$.
\end{lemma}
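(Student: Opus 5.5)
Fix an isogeny $\mu\colon B_1\to B_2$ and build a non-scalar Rosati-symmetric element $x$ of $\End^0(B)$ whose square is a positive rational scalar; then $L=\bQ[x]$ is a real quadratic field. On $B=B_1\times B_2$ with polarization $\lambda=\lambda_1\oplus\lambda_2$, the Rosati involution of a block endomorphism
$$
y=\begin{pmatrix}y_{11}&y_{12}\\y_{21}&y_{22}\end{pmatrix},\qquad y_{ij}\in\Hom^0(B_j,B_i),
$$
is computed blockwise:
$$
y^\dagger=\begin{pmatrix}y_{11}^\dagger&\lambda_1^{-1}\widehat{y_{21}}\lambda_2\\ \lambda_2^{-1}\widehat{y_{12}}\lambda_1&y_{22}^\dagger\end{pmatrix}.
$$
Hence it is natural to try
$$
x=\begin{pmatrix}0&\nu\\ \mu&0\end{pmatrix},\qquad \nu\coloneqq\lambda_1^{-1}\widehat{\mu}\lambda_2\in\Hom^0(B_2,B_1),
$$
so $\nu$ is the Rosati adjoint $\mu^\dagger$. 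The off-diagonal entries of $x^\dagger$ are then $\lambda_1^{-1}\widehat{\mu}\lambda_2=\nu$ and $\lambda_2^{-1}\widehat{\nu}\lambda_1$. Since $\widehat{\widehat\mu}=\mu$ and polarizations are symmetric, the latter equals $\lambda_2^{-1}\lambda_2\mu\lambda_1^{-1}\lambda_1=\mu$. So $x^\dagger=x$.

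Next compute $x^2=\operatorname{diag}(\nu\mu,\mu\nu)$. The element $\nu\mu=\lambda_1^{-1}\widehat\mu\lambda_2\mu\in\End^0(B_1)$ is Rosati-symmetric for $\lambda_1$, because $\widehat\mu\lambda_2\mu$ is a symmetric homomorphism $B_1\to\widehat{B_1}$. By the identification $\End^0(B_1)^\dagger\cong\NS(B_1)_\bQ$ and $\rho(B_1)=1$, it is a rational scalar $r$. It is positive because $\mu^*$ of an ample class is ample. Similarly, $\mu\nu=\mu(\nu\mu)\mu^{-1}=r$ on $B_2$, using that $\mu$ is invertible in $\Hom^0$. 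Thus $x^2=r\cdot1_B$ with $r\in\bQ_{>0}$.

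It remains to see that $x$ is not scalar and that $\bQ[x]$ is a field. The first holds since $x$ is off-diagonal and nonzero. For the second, I need $\sqrt r\notin\bQ$; this is the main obstacle. If $r=s^2$ with $s\in\bQ$, I would rescale by replacing $\mu$ with $n\mu$ for an integer $n$, which multiplies $r$ by $n^2$. That does not change squareness, so instead I would precompose $\mu$ with a suitable element of $\End^0(B_1)$. Since $\NS(B_1)$ has rank one, every Rosati-symmetric element is rational, but $\End^0(B_1)$ may still contain elements $\phi$ with $\phi^\dagger\phi=N(\phi)\in\bQ_{>0}$. Replacing $\mu$ by $\mu\phi$ multiplies $r$ by $\phi^\dagger\phi$. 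The difficulty is that this degree-type norm might always be a square.

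I would rather avoid this issue. If $r=s^2$, then $e=\tfrac12(1+x/s)$ is a nontrivial Rosati-symmetric idempotent, and the algebra $\End^0(B)^\dagger$ is spanned by diagonal scalars and such off-diagonal elements. The cleanest fix is to take instead $x'=\operatorname{diag}(1,2)+x$, and more generally $x_t=\operatorname{diag}(a,b)+x$ with $a\ne b$ rational. This element is Rosati-symmetric, and its minimal polynomial is $(T-a)(T-b)-r$, with discriminant $(a-b)^2+4r$. I will choose $a,b\in\bQ$ with $(a-b)^2+4r$ not a rational square, which is always possible, for example by taking $a-b=k$ and varying $k$. Then $L=\bQ[x_t]$ is a quadratic field strictly containing $\bQ$, and it lies in $\End^0(B)^\dagger$ because polynomials in a self-adjoint element are self-adjoint. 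Lemma~\ref{lem:totally-real-L} then also confirms that $L$ is real quadratic.
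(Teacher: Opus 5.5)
Your argument is correct and is essentially the paper's proof. There too, Picard number one forces $f^\dagger f=q1_{B_1}$ with $q\in\bQ_{>0}$, and the paper takes $x_t=\begin{pmatrix}0&f^\dagger\\ f&t1_{B_2}\end{pmatrix}$ with $t^2+4q$ not a rational square, which is your $\operatorname{diag}(a,b)+x$ with $a=0$, $b=t$; the exploratory detour about whether $r$ is a square can be dropped, since the diagonal perturbation handles both cases uniformly.
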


\begin{proof}
Choose an isogeny $f\colon B_1\to B_2$, and let $f^\dagger=\lambda_1^{-1}\widehat{f}\lambda_2$.
Then $f^\dagger f$ is positive and self-adjoint.
Since $\End^0(B_1)^{\dagger_1}=\bQ1_{B_1}$, there is $q\in\bQ_{>0}$ such that $f^\dagger f=q1_{B_1}$, and hence $ff^\dagger=q1_{B_2}$.
For $t\in\bQ$, the endomorphism
$$
x_t=
\begin{pmatrix}
    0 & f^\dagger \\ f & t1_{B_2}
\end{pmatrix}
\in\End^0(B)
$$
is self-adjoint and satisfies $x_t^2-tx_t-q1_B=0$.
Choose $t\in\bQ$ so that $t^2+4q$ is not a rational square.
Then $L\coloneqq\bQ[x_t]$ has the desired property.
\end{proof}

\begin{prop}
\label{prop:Rosati-fixed-condition-star-failed}
Suppose $A$ fails Condition~($\star$), and fix a polarization $\lambda_A$.
There are polarized abelian varieties $B,C$, a polarization-orthogonal isogeny
$$
\mu\colon B\times C\to A, \qquad \widehat{\mu}\lambda_A\mu=\lambda_B\oplus\lambda_C,
$$
and a totally real number field $\bQ\subsetneq L\subseteq \End^0(B)^{\dagger_B}$.
Here $C$ may be zero.
\end{prop}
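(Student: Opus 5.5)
We first decompose $A$ polarization-orthogonally into simple factors and then split into two cases according to how Condition~$(\star)$ fails. By the iterated Poincar\'e reducibility recalled in Section~2.1, there is an isogeny $\nu\colon B_1\times\cdots\times B_k\to A$ from a product of simple abelian varieties with $\widehat\nu\lambda_A\nu=\lambda_1\oplus\cdots\oplus\lambda_k$, where $\lambda_i$ is the restriction of $\lambda_A$ to $B_i$. Since $A$ fails $(\star)$, either some factor $B_i$ has $\rho(B_i)>1$, or all factors have Picard number one and two of them, say $B_1,B_2$ after reordering, are isogenous.

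In the first case we take $B=B_i$ and $C=\prod_{j\neq i}B_j$, with $\lambda_B=\lambda_i$ and $\lambda_C=\bigoplus_{j\neq i}\lambda_j$. We let $\mu$ be the composite of the permutation isomorphism $B\times C\cong\prod_j B_j$ with $\nu$. Then $\widehat\mu\lambda_A\mu=\lambda_B\oplus\lambda_C$, because permuting factors respects the direct-sum polarization. Lemma~\ref{lem:Rosati-fixed-field-picard>1} supplies the number field $\bQ\subsetneq L\subseteq\End^0(B)^\dagger$.

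In the second case we take $B=B_1\times B_2$ with the product polarization $\lambda_1\oplus\lambda_2$, and $C=\prod_{j\ge3}B_j$, which may be zero. The isogeny $\mu$ is built in the same way, and Lemma~\ref{lem:Rosati-fixed-field-two-copies} gives $L$. In both cases $L$ is totally real by Lemma~\ref{lem:totally-real-L}.

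The only point that needs care is the first step: the iterated orthogonal decomposition must actually pull $\lambda_A$ back to the direct sum of the restricted polarizations. We check this by induction. At each step, the identity $\widehat\mu\lambda_A\mu=\lambda_B\oplus\lambda_C$ from Section~2.1 lets us apply the construction again to $(C,\lambda_C)$. Composing these isogenies, and using that duals of products are products of duals, gives a block-diagonal pullback. Everything else is a case check together with the two lemmas already proved, so no step should present a real obstacle.
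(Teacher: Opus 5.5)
Your proposal is correct and follows essentially the same route as the paper. You take an orthogonal Poincar\'e decomposition and use Lemma~\ref{lem:Rosati-fixed-field-picard>1} when a simple factor has Picard number $>1$, and Lemma~\ref{lem:Rosati-fixed-field-two-copies} when two Picard-number-one factors are isogenous; the remaining factors form $C$, and Lemma~\ref{lem:totally-real-L} gives total reality. Your inductive check that the iterated decomposition pulls $\lambda_A$ back to the block-diagonal polarization spells out what the paper takes as given from Section~2.1.
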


\begin{proof}
Take an orthogonal Poincar\'e decomposition of $A$ into simple factors.
If a factor has Picard number greater than one, take it as $B$ and apply Lemma~\ref{lem:Rosati-fixed-field-picard>1}.
Otherwise all factors have Picard number one, and failure of Condition~($\star$) means that two are isogenous.
Take their product as $B$ and apply Lemma~\ref{lem:Rosati-fixed-field-two-copies}.
Let $C$ be the product of the remaining factors.
The totally reality of $L$ follows from Lemma~\ref{lem:totally-real-L}.
\end{proof}

%--------------------%--------------------%--------------------
\subsection{Transcendental shifting numbers}
\label{subsec:6.4}
In this subsection, we complete the proofs of Theorem~\ref{thm:main-shifting-spectrum} and Corollary~\ref{cor:main-DHKK-algebraicity}.

Keep the data of Proposition~\ref{prop:Rosati-fixed-condition-star-failed}.
Put
$$
i_B=\mu|_{B\times\{0\}}, \qquad 
p_B=\text{pr}_B\mu^{-1}, \qquad
e=i_Bp_B.
$$
Then $p_Bi_B=1_B$, and
$$
j\colon \End^0(B)\to\End^0(A), \qquad
j(a)\coloneqq i_B a p_B
$$
is an injective multiplicative map with $j(1_B)=e$.

\begin{lemma}
\label{lem:uA(h)}
The homomorphism $\SL_2(L)\to U(A)_\bQ$ obtained from (\ref{eqn:SL2(L)-U(B)}) by extension by the identity on $C\times\widehat{C}$ and conjugation by $f_\mu$, sends
$$
h=
\begin{pmatrix}
    1+a & b \\ c & 1+d 
\end{pmatrix}
$$
to 
$$
u_A(h)=
\begin{pmatrix}
    1_A+j(a) & j(b)\lambda_A^{-1} \\
    \lambda_A j(c) & 1_{\widehat{A}}+\lambda_A j(d) \lambda_A^{-1}
\end{pmatrix}.
$$
Moreover, $\overline{\tau}_A(u_A(h))=\overline{\tau}_B(T_h)$.
\end{lemma}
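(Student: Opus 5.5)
The lemma has two parts: an explicit matrix identity, and an equality of reduced shifting numbers. The equality follows from compatibility lemmas already proved, so the main work is the matrix computation.

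\emph{Matrix formula.} Write $h=1+n$ with $n=\begin{pmatrix}a&b\\c&d\end{pmatrix}$, so that $T_h=1+T'_n$. Here
$$
T'_n=\begin{pmatrix}a&b\lambda_B^{-1}\\ \lambda_Bc&\lambda_Bd\lambda_B^{-1}\end{pmatrix},
$$
which is linear in $n$. Extending by the identity on $C\times\widehat C$ gives the block element $1_{B\times\widehat B\times C\times\widehat C}+(T'_n\oplus 0)$ of $\End^0(B\times C\times\widehat B\times\widehat C)$. Conjugating by $f_\mu$ sends $1$ to $1$ and sends the perturbation to $f_\mu(T'_n\oplus0)f_\mu^{-1}$. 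This is computed blockwise with the explicit formula from Lemma~\ref{lem:isogeny-cocycle}:
$$
f_\mu\begin{pmatrix}x&y\\z&w\end{pmatrix}f_\mu^{-1}
=\begin{pmatrix}\mu x\mu^{-1}&\mu y\widehat\mu\\ \widehat\mu^{-1}z\mu^{-1}&\widehat\mu^{-1}w\widehat\mu\end{pmatrix}.
$$
Apply this with $x=a\oplus0$, $y=b\lambda_B^{-1}\oplus0$, $z=\lambda_Bc\oplus0$, $w=\lambda_Bd\lambda_B^{-1}\oplus0$, where $a\oplus0$ means $a$ on $B$ and $0$ on $C$. The following identities then reduce everything to the formula claimed in the lemma.
\begin{itemize}
\item $\mu(a\oplus0)\mu^{-1}=i_Bap_B=j(a)$. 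This holds because $\mu(a\oplus0)=i_Ba\,\mathrm{pr}_B$ and $p_B=\mathrm{pr}_B\mu^{-1}$.
\item The polarization-orthogonality $\widehat\mu\lambda_A\mu=\lambda_B\oplus\lambda_C$ gives $\lambda_A^{-1}=\mu(\lambda_B^{-1}\oplus\lambda_C^{-1})\widehat\mu$ and $\widehat\mu^{-1}=\lambda_A\mu(\lambda_B^{-1}\oplus\lambda_C^{-1})$.
\item Using these, $\mu(b\lambda_B^{-1}\oplus0)\widehat\mu=\mu(b\oplus0)\mu^{-1}\cdot\mu(\lambda_B^{-1}\oplus\lambda_C^{-1})\widehat\mu=j(b)\lambda_A^{-1}$.
\item Similarly, $\widehat\mu^{-1}(\lambda_Bc\oplus0)\mu^{-1}=\lambda_A\mu(c\oplus0)\mu^{-1}=\lambda_Aj(c)$.
\item Finally, $\widehat\mu^{-1}(\lambda_Bd\lambda_B^{-1}\oplus0)\widehat\mu=\lambda_Aj(d)\lambda_A^{-1}$.
\end{itemize}
The only real care is tracking the inverses of $\mu$ and $\widehat\mu$ as rational homomorphisms. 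Everything lives in $\End^0$, so these manipulations are legitimate.

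\emph{Reduced shifting numbers.} The map $h\mapsto u_A(h)$ factors as three steps: $h\mapsto T_h\in U(B)_\bQ$, then the product embedding into $U(B\times C)_\bQ$ with the identity on the $C$-factor, then conjugation by $f_\mu$ into $U(A)_\bQ$. Lemma~\ref{lem:isogeny-cocycle} gives $\overline\tau_A(f_\mu vf_\mu^{-1})=\overline\tau_{B\times C}(v)$. Lemma~\ref{lem:product-cocycle} gives $\overline\tau_{B\times C}(T_h\oplus1)=\overline\tau_B(T_h)+\overline\tau_C(1)$. The last term vanishes, since the identity element has homogenized central coordinate $0$. If $C=0$, the product step is vacuous.
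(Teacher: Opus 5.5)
Your proposal is correct and matches the paper's proof: the paper calls the matrix formula a straightforward calculation, which you carry out correctly using $\widehat\mu\lambda_A\mu=\lambda_B\oplus\lambda_C$ to rewrite $\lambda_A^{-1}$ and $\widehat\mu^{-1}$, and it then deduces $\overline{\tau}_A(u_A(h))=\overline{\tau}_B(T_h)$ from Lemmas~\ref{lem:isogeny-cocycle} and \ref{lem:product-cocycle}, exactly as you do. One small refinement: $\overline\tau_C(1)=0$ holds because the homogenized central coordinate of $(1,0)$ is the integer $c_C(1,1)$, which vanishes in $\bR/\bZ$ whether or not the cocycle is normalized.
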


\begin{proof}
The map $h\mapsto u_A(h)$ follows from a straightforward calculation.
Lemmas~\ref{lem:isogeny-cocycle} and \ref{lem:product-cocycle} give the equality of the reduced shifting numbers.
\end{proof}

\begin{lemma}
\label{lem:prep-for-transcendental}
Let $L\supsetneq\bQ$ be a totally real field, let $\eta$ be a primitive element, and let $M\geq1$ be an integer.
There are an embedding $\sigma_0\colon L\hookrightarrow\bR$, an integer $n>0$, and $q\in\bZ$ such that
$$
-\frac{4}{M^2}<n\sigma_0(\eta)+q<0, \qquad n\sigma(\eta)+q>0 \quad (\sigma\neq\sigma_0).
$$
\end{lemma}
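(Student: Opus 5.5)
The plan is a Dirichlet-type approximation applied to the distinct real conjugates of $\eta$. Let $d=[L:\bQ]\geq2$. Since $L$ is totally real and $\eta$ is primitive, $\eta$ has $d$ distinct real conjugates $\sigma(\eta)$. Order them as $\eta_1<\eta_2<\cdots<\eta_d$, and take $\sigma_0$ to be the embedding with $\sigma_0(\eta)=\eta_1$, the smallest conjugate. Put
$$
\delta\coloneqq\eta_2-\eta_1>0.
$$
For $n>0$, the numbers $n\sigma(\eta)+q$ keep the same ordering as the conjugates. So it suffices to find $n$ and $q$ with
$$
-\frac{4}{M^2}<n\eta_1+q<0
\qquad\text{and}\qquad
n\eta_2+q>0.
$$
The second inequality then propagates to every $\sigma\neq\sigma_0$.

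\emph{Step 1: choose $n$ large.} Pick an integer $n$ with $n\delta>4/M^2$.

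\emph{Step 2: choose $q$.} Take the integer $q$ with $n\eta_1+q\in[-1,0)$, namely $q=\lceil -n\eta_1\rceil-1$. This places $n\eta_1+q$ in a window of length $1$ just below $0$. However, it does not yet lie in the required window of length $4/M^2$, which may be shorter than $1$.

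\emph{Step 3: shrink the window.} This is the main point, since $q$ only moves in integer steps. The fix is to use the freedom in $n$ to control the fractional part of $n\eta_1$.

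\begin{itemize}
\item If $\eta_1$ is irrational, the sequence $\{n\eta_1\}$ is equidistributed by Weyl, so there are infinitely many, hence arbitrarily large, $n$ with
$$
\{-n\eta_1\}\in\left(1-\frac{4}{M^2},\,1\right)\setminus\{0\}.
$$
For such $n$, take $q=\lceil -n\eta_1\rceil-1$. Then $n\eta_1+q\in(-4/M^2,0)$, and $n$ can also be chosen with $n\delta>4/M^2$.
\item Now $\eta_1$ is irrational: a rational conjugate would make the degree $1$, while $d\geq2$ forces $\eta\notin\bQ$ and hence all its conjugates are irrational.
\end{itemize}

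\emph{Step 4: conclude.} With these choices,
$$
n\eta_2+q=(n\eta_1+q)+n\delta>-\frac{4}{M^2}+n\delta>0.
$$
Because the conjugates are ordered, $n\sigma(\eta)+q>0$ for all $\sigma\neq\sigma_0$. This gives the inequalities in Lemma~\ref{lem:prep-for-transcendental}.

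The only nontrivial input is density of $\{n\eta_1\}$ modulo $1$, which holds by Kronecker's theorem for irrational $\eta_1$. That density is exactly what lets the large-$n$ requirement and the small-window requirement be met at the same time.
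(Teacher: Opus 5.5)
Your argument follows the paper's proof. You take $\sigma_0$ with $\sigma_0(\eta)$ minimal and use density of $n\sigma_0(\eta)$ modulo $1$ to get arbitrarily large $n$ in the short window. Then $n(\sigma(\eta)-\sigma_0(\eta))$ dominates for the other embeddings; your gap $\delta=\eta_2-\eta_1$ just makes the paper's ``tends to infinity'' explicit.

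However, Step~3 has a sign slip that makes the stated conclusion false. Since $-n\eta_1\notin\bZ$, your choice $q=\lceil -n\eta_1\rceil-1=\lfloor -n\eta_1\rfloor$ gives exactly $n\eta_1+q=-\{-n\eta_1\}$. So the window $\{-n\eta_1\}\in(1-4/M^2,1)$ you impose yields $n\eta_1+q\in(-1,-1+4/M^2)$. This interval is disjoint from $(-4/M^2,0)$ as soon as $M\geq3$, and large $M$ is exactly the case the paper needs when it clears denominators.

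The fix is to require $\{-n\eta_1\}\in(0,4/M^2)$ instead, equivalently $\{n\eta_1\}\in(1-4/M^2,1)$. Kronecker density still gives infinitely many such $n$, in particular ones with $n\delta>4/M^2$. Step~4 then goes through verbatim.
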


\begin{proof}
The conjugates of $\eta$ are distinct; choose $\sigma_0$ where $\sigma_0(\eta)$ attain their minimum. 
Note that $\sigma_0(\eta)$ is irrational.
Density of its positive multiples modulo one gives arbitrarily large $n$, with suitable $q\in\bZ$, such that
$$
-\frac{4}{M^2}<n\sigma_0(\eta)+q<0.
$$
For $\sigma\neq\sigma_0$,
$$
n\sigma(\eta)+q=n(\sigma(\eta)-\sigma_0(\eta))+(n\sigma_0(\eta)+q).
$$
The first term tends to infinity and the second is bounded.
Choose $n$ sufficiently large for all the other embeddings.
\end{proof}

\begin{thm}
\label{thm:condition-star-fail}
If $A$ fails Condition~($\star$), there is an autoequivalence of $\Db(A)$ with transcendental shifting number.
\end{thm}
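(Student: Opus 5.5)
Take the data $(B,C,\mu,L)$ of Proposition~\ref{prop:Rosati-fixed-condition-star-failed}, with $L$ totally real and $[L:\bQ]>1$. The goal is an integral element $h\in\SL_2(L)$ such that $u_A(h)$ from Lemma~\ref{lem:uA(h)} lies in $U(A)$ and $h$ satisfies the hypothesis of Proposition~\ref{prop:tauB-irrational}. Then $\overline\tau_A(u_A(h))=\overline\tau_B(T_h)$ is transcendental. By Orlov's surjectivity $\gamma_A$, there is $\Phi$ with $u_\Phi=u_A(h)$. Since $\tau(\Phi)\equiv\overline\tau_A(u_\Phi)$ modulo $\bZ$, and an integer shift preserves transcendence, $\tau(\Phi)$ is transcendental.

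\textbf{Integrality.} Choose a primitive element $\eta$ of $L$ that is an algebraic integer. Choose an integer $M\geq1$ such that $j(M\ell)\lambda_A^{-1}$, $\lambda_A j(M\ell)$, $j(M\ell)$ and $\lambda_Aj(M\ell)\lambda_A^{-1}$ are genuine homomorphisms for every $\ell\in\bZ[\eta]$. This is possible because these maps are $\bZ$-linear in $\ell$ and $\bZ[\eta]$ is finitely generated, so it is enough to clear denominators on a basis. Now take
$$
h=\begin{pmatrix}1+Ma & Mb\\ Mc & 1+Md\end{pmatrix},\qquad a,b,c,d\in\bZ[\eta].
$$
Then $u_A(h)$ has integral blocks and lies in $U(A)_\bQ$, so it lies in $U(A)$. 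To keep the construction simple, set $a=d=0$ and $b=1$. This gives $h=\begin{pmatrix}1&M\\ Mc&1\end{pmatrix}$, which has determinant $1-M^2c$. That is not $1$, so this choice has to be adjusted.

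\textbf{A determinant-one choice.} Use a product of two unipotents instead, $h=\begin{pmatrix}1&M\\0&1\end{pmatrix}\begin{pmatrix}1&0\\Mc&1\end{pmatrix}$. It has determinant $1$ and trace $2+M^2c$. The trace condition of Proposition~\ref{prop:tauB-irrational} asks for $|\sigma_0(2+M^2c)|<2$ and $|\sigma(2+M^2c)|>2$ for all $\sigma\neq\sigma_0$. Both hold if
$$
-\tfrac{4}{M^2}<\sigma_0(c)<0,\qquad \sigma(c)>0\quad(\sigma\neq\sigma_0).
$$
Lemma~\ref{lem:prep-for-transcendental} produces exactly such an element $c=n\eta+q\in\bZ[\eta]$. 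Each factor $\begin{pmatrix}1&M\\0&1\end{pmatrix}$ and $\begin{pmatrix}1&0\\Mc&1\end{pmatrix}$ has the shape above, so both map into $U(A)$ by the choice of $M$, and hence so does their product.

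\textbf{Main obstacle.} The delicate step is the uniform integrality bound: one $M$ must make $u_A$ of both factors integral, including the $C$-part and the conjugation by $f_\mu$. Lemma~\ref{lem:uA(h)} gives explicit blocks, and those involve only $j(\cdot)$, $\lambda_A$, $\lambda_A^{-1}$ and $\mu^{-1}$ through $p_B$. Clearing denominators in these finitely many rational homomorphisms applied to a $\bZ$-basis of $\bZ[\eta]$ therefore suffices.

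With $h$ built, the argument finishes as planned. Proposition~\ref{prop:tauB-irrational} shows $\overline\tau_B(T_h)$ is transcendental, Lemma~\ref{lem:uA(h)} transfers this to $\overline\tau_A(u_A(h))$, and Orlov's surjectivity gives the autoequivalence $\Phi$.
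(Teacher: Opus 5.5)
Your proposal is correct and is essentially the paper's proof. Your product of two unipotents equals $\begin{pmatrix}1+M^2c&M\\ Mc&1\end{pmatrix}$, which is exactly the paper's $h=\begin{pmatrix}1+Mx&M\\ x&1\end{pmatrix}$ with $x=Mc$, and the remaining steps are the same: Lemma~\ref{lem:prep-for-transcendental}, Lemma~\ref{lem:uA(h)}, Proposition~\ref{prop:tauB-irrational}, and Orlov's surjectivity. The only difference is cosmetic: you check integrality factor by factor after clearing denominators over all of $\bZ[\eta]$, whereas the paper reads it off directly from the explicit blocks of $u_A(h)$, which needs only $Mj(\eta)$, $Me$ and $Me\lambda_A^{-1}$ to be integral.
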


\begin{proof}
Choose the data of Proposition~\ref{prop:Rosati-fixed-condition-star-failed} and a primitve element $\eta\in L$.
Fix a positive integer $M$ so that
\begin{equation}
\label{eqn:clear-denominator}
Mj(\eta),Me\in\End(A), \qquad Me\lambda_A^{-1}\in\Hom(\widehat{A},A).
\end{equation}
Choose $\sigma_0,n,q$ from Lemma~\ref{lem:prep-for-transcendental} and set
$$
x=M(n\eta+q), \qquad
h=
\begin{pmatrix}
    1+Mx & M \\ x & 1 
\end{pmatrix}
\in \SL_2(L).
$$
Its traces satisfy
$$
|2+M\sigma_0(x)|<2, \qquad 2+M\sigma(x)>2 \quad (\sigma\neq\sigma_0).
$$
By Lemma~\ref{lem:uA(h)}, its image in $U(A)_\bQ$ is
$$
u_A(h)=
\begin{pmatrix}
    1_A+Mj(x) & Me\lambda_A^{-1} \\
    \lambda_A j(x) & 1_{\widehat{A}}
\end{pmatrix}.
$$
In fact, all the entries are integral by (\ref{eqn:clear-denominator}).
Thus $u_A(h)\in U(A)$.

By Orlov's surjectivity, there exists $\Phi\in\Aut\Db(A)$ such that $u_\Phi=u_A(h)$. We have $\overline{\tau}(\Phi)=\overline{\tau}_A(u_A(h))$.
It is transcendental by Lemma~\ref{lem:uA(h)} and Proposition~\ref{prop:tauB-irrational}.
\end{proof}

We can now prove Theorem~\ref{thm:main-shifting-spectrum}.

\begin{proof}[Proof of Theorem~\ref{thm:main-shifting-spectrum}]
Bauer's theorem \cite{Bauer} identifies Condition~($\star$) with rational polyhedrality of $\Nef(A)$.
The remaining equivalences follow from Theorem~\ref{thm:condition-star-satisfy} and Theorem~\ref{thm:condition-star-fail}.
\end{proof}

\begin{lemma}
\label{lemma:entropy-graph-criterion}
Suppose $\Phi\in\Aut\Db(A)$ has $\tau(\Phi)\notin\bQ$.
Then there is no nonzero polynomial $P\in\bC[x,y]$ satisfying
$$
P(e^t,e^{h_t(\Phi)})=0 \qquad (t\in\bR).
$$
\end{lemma}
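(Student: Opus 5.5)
By Theorem~\ref{thm:main-entropy}, the entropy function is affine in $t$:
$$
h_t(\Phi)=\lambda+\tau t,\qquad \lambda=\log\rho(N(\Phi)),\quad \tau=\tau(\Phi).
$$
Hence $e^{h_t(\Phi)}=\rho\, e^{\tau t}$ with $\rho=\rho(N(\Phi))>0$. Substituting $s=e^t$, which ranges over all of $(0,\infty)$, the hypothesis $P(e^t,e^{h_t(\Phi)})=0$ becomes
$$
P(s,\rho s^{\tau})=0\qquad\text{for all } s>0.
$$
The plan is to show that no nonzero $P\in\bC[x,y]$ can satisfy this when $\tau$ is irrational.

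Write $P(x,y)=\sum_{(a,b)\in S}c_{a,b}x^ay^b$, where $S$ is a finite set of exponent pairs and every $c_{a,b}\neq 0$. Then
$$
P(s,\rho s^\tau)=\sum_{(a,b)\in S}c_{a,b}\rho^b s^{a+b\tau}.
$$
The key observation is that the real exponents $a+b\tau$ are pairwise distinct. Indeed, $a+b\tau=a'+b'\tau$ with $(a,b)\neq(a',b')$ forces $b\neq b'$, and then $\tau=(a'-a)/(b-b')\in\bQ$, contradicting the hypothesis. So after collecting terms we have a finite sum $\sum_k d_k s^{\mu_k}$ with distinct real exponents $\mu_k$ and nonzero coefficients $d_k=c_{a,b}\rho^b$.

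It remains to prove that such a generalized polynomial cannot vanish identically on $(0,\infty)$. Let $\mu_{\max}$ be the largest exponent. Dividing by $s^{\mu_{\max}}$ and letting $s\to\infty$, every other term tends to $0$, so the limit is $d_{\max}\neq0$. This contradicts vanishing. Alternatively, one can substitute $s=e^t$ and use the linear independence of the exponentials $e^{\mu_k t}$.

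None of these steps is a real obstacle. The only points needing care are that $\rho(N(\Phi))>0$, which holds because $N(\Phi)$ is invertible, so that every coefficient $d_k$ is genuinely nonzero; and that the affine form of $h_t$ is valid for all $t\in\bR$, which Theorem~\ref{thm:main-entropy} guarantees. Combined with Theorem~\ref{thm:condition-star-fail}, this lemma then yields Corollary~\ref{cor:main-DHKK-algebraicity}.
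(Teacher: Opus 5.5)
Your proposal is correct and follows the paper's proof: both use Theorem~\ref{thm:main-entropy} to make $h_t(\Phi)$ affine in $t$, and both note that irrationality of $\tau(\Phi)$ makes the exponents $a+b\,\tau(\Phi)$ pairwise distinct. Both then conclude from the linear independence of exponentials with distinct real exponents, and your dominant-term argument as $s\to\infty$ is simply a self-contained proof of that independence.
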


\begin{proof}
By Theorem~\ref{thm:main-entropy}, $h_t(\Phi)=\log\lambda+bt$ is linear, where $\lambda=\rho(N(\Phi))>0$ and $b=\tau(\Phi)\notin\bQ$.
Suppose such a polynomial $P(x,y)=\sum_{i,j}c_{i,j}x^iy^j$ exists. Then
$$
\sum_{i,j} c_{ij}\lambda^j e^{(i+jb)t}=0 \qquad (t\in\bR).
$$
Since $b\notin\bQ$, the exponents $i+jb$ are distinct for distinct pairs $(i,j)$.
The exponential functions $e^{\alpha t}$ with distinct real exponents
$\alpha$ are linearly independent over $\bC$, thus every $c_{ij}$ is zero.
\end{proof}

\begin{proof}[Proof of Corollary~\ref{cor:main-DHKK-algebraicity}]
Apply Lemma~\ref{lemma:entropy-graph-criterion} to any autoequivalence from Theorem~\ref{thm:condition-star-fail}.
\end{proof}

\bigskip
\bibliography{ref}
\bibliographystyle{alpha}

\ \\

\small{
\noindent Yu-Wei Fan \\
\textsc{Center for Mathematics and Interdisciplinary Sciences, Fudan University, \\ Shanghai 200433, China}\\
\textsc{Shanghai Institute for Mathematics and Interdisciplinary Sciences (SIMIS), \\  Shanghai 200433, China}\\
\texttt{yuweifanx@gmail.com}
}

\end{document}